\theoremstyle{plain}
\newtheorem{Pocz}{Poczatek}[section]
\newtheorem{Proposition}[Pocz]{Proposition}
\newtheorem{Theorem}[Pocz]{Theorem}
\newtheorem{Corollary}[Pocz]{Corollary}
\newtheorem{Lemma}[Pocz]{Lemma}
\newtheorem{Observation}[Pocz]{Observation}
\newtheorem{Question}[Pocz]{Question}
\newtheorem{Example}[Pocz]{Example}
\theoremstyle{definition}
\newtheorem{Definition}[Pocz]{Definition}
\theoremstyle{remark}
\newtheorem{Remark}[Pocz]{Remark}
\newtheorem{Exercise}[Pocz]{Exercise}
\DeclareMathOperator*{\diam}{diam}
\def\asdim{\mathrm{asdim}}
\def\diam{\mathrm{diam}}
\def\array{\mathrm{array}}
\numberwithin{equation}{section}
\title[Matrix algebra of sets  and variants of decomposition complexity]
{Matrix algebra of sets  and variants of decomposition complexity}
\author{Jerzy Dydak}
\address{University of Tennessee, Knoxville, TN 37996}
\email{jdydak\@@utk.edu}
\date{ \today
}
\keywords{Asymptotic dimension, Asymptotic Property C, Decomposition complexity, Property A}
\subjclass[2000]{Primary 54F45; Secondary 55M10}
\begin{document}
\maketitle
\begin{center}
\today
\end{center}

\begin{abstract}
We introduce matrix algebra of subsets in metric spaces and we apply it to improve results of Yamauchi and Davila regarding Asymptotic Property C. Here is a representative result:\\
Suppose $X$ is an $\infty$-pseudo-metric space and $n\ge 0$ is an integer. The \textbf{asymptotic dimension} $\asdim(X)$ of $X$ is at most $n$
if and only if for any real number $r > 0$ and any integer $m\ge 1$ there is
an augmented $m\times (n+1)$-matrix $\mathcal{M}=[\mathcal{B} |\mathcal{A}]$ (that means $\mathcal{B}$ is a column-matrix and $\mathcal{A}$
is an $m\times n$-matrix)  of subspaces of $X$ of scale-$r$-dimension $0$
such that $\mathcal{M}\cdot_\cap \mathcal{M}^T$
is bigger than or equal to the identity matrix and
$B(\mathcal{A},r)\cdot_\cap B(\mathcal{A},r)^T$
is a diagonal matrix.
\end{abstract}

\section{Introduction}

This paper is devoted to \textbf{decomposition complexity} understood as any coarse invariant defined in terms of decomposing spaces into $r$-disjoint subsets. Historically, the first such invariant, namely asymptotic dimension, was introduced by Gromov for the purpose of studying groups using geometric methods (see \cite{Roe}). In Ostrand (\cite{O$_1$} or
\cite{OstrandDimofMetriSpacesHilbert}) formulation (see \cite{BDLM}) it can be defined as follows:

\begin{Definition}
Suppose $X$ is a metric space. The \textbf{asymptotic dimension} of $X$ is at most $n$
if, for every real number $r > 0$, there is a decomposition of $X$ into a union of
its subsets $X_0, \ldots, X_n$ such that each $X_i$ is the union of a uniformly bounded
and $r$-disjoint family $\mathcal{U}_i$. That means there is a real number $S > 0$ with each member
of $\mathcal{U}_i$ being of diameter at most $S$ and the distance between points belonging
to different elements of $\mathcal{U}_i$ is at least $r$.
\end{Definition}

Since then several concepts related to asymptotic dimension were introduced by various authors. One can see them as a spectrum with asymptotic dimension being the strongest concept and weak coarse paracompactness being the weakest (see \cite{CDV3}).
The concept closest to asymptotic dimension was introduced by Dranishnikov \cite{Dra1} under the name of Asymptotic Property C:

\begin{Definition}
Suppose $X$ is a metric space. $X$ has \textbf{Asymptotic Property C}
if, for every sequence of real numbers $r_i > 0$, there is a decomposition of $X$ into a finite union of
its subsets $X_0, \ldots, X_n$ for some natural $n$ such that each $X_i$ is the union of a uniformly bounded
and $r_i$-disjoint family $\mathcal{U}_i$. 
\end{Definition}
A concept weaker than asymptotic dimension appeared in \cite{GTY1} under the name of \textbf{finite
decomposition complexity} (FDC) in order to study questions concerning the topological rigidity of manifolds (see \cite{GTY1} and \cite{GTY2} for details): the Bounded Borel Conjecture (\cite{RY1, RY2}, \cite{GTY1}), the integral Novikov conjecture for the algebraic K-theory of group rings $R[\Gamma]$ (see \cite{RTY}, \cite{BC}, \cite{GYu}, \cite{STG}, \cite{GHW}).

The class of metric spaces with finite decomposition complexity contains all countable linear groups equipped with a proper (left-)invariant metric (see \cite{GTY1}, \cite{GTY2}).

Subsequently, Dranishnikov and Zarichnyi \cite{DZ} introduced a simpler concept, namely \textbf{straight finite decomposition complexity} which is much closer in spirit to the Asymptotic Property C.
That concept was subsequently generalized in \cite{Dyd1} and, independently, in \cite{RR}.
The common feature of all the generalizations is that they imply Property A of G.Yu
(see \cite{NowakYu}, \cite{CDV1}, \cite{CDV2}, and \cite{CDV3}  for various characterizations of it).

This paper introduces new variants of decomposition complexity, namely Asymptotic Property D, which is stronger than Asymptotic Property C and we generalize known results of
T.Yamauchi \cite{Yam} and T.Davila \cite{Davila}. Namely,
T.Yamauchi \cite{Yam} provided an ingenious proof that the infinite direct product of integers has 
Asymptotic Property C. T.Davila \cite{Davila} generalized it to arbitrary reduced product of countable groups of finite asymptotic dimension. Our methods use matrix algebra of arrays of subsets of spaces yielding simpler proofs and stronger results.

See \cite{BG}, \cite{BN}, and \cite{DZ} for other recent results concerning asymptotic property C.

\section{Matrix algebra of sets}
In this section we develop the matrix algebra of set-valued arrays in analogy to the classical matrix algebra of vectors. 

Given a set $X$ we will consider arrays $\mathcal{A}$ consisting of subspaces of $X$ in a similar manner to row-vectors with real-valued coordinates. Another point of view is to consider a subspace-array $\mathcal{A}$ as a function from its index set $S$ to $2^X$, the family of all subsets of $X$.
Each subspace-array $\mathcal{A}$ of $X$ has its \textbf{transpose} $\mathcal{A}^T$.

\begin{Definition}
Given two subspace-arrays $\mathcal{A}$ and $\mathcal{B}$ of $X$ indexed by the same set $S$, the \textbf{$\cap$-dot product} of them is defined as
$$\mathcal{A}\cdot_\cap \mathcal{B}=\bigcup\limits_{s\in S}\mathcal{A}(s)\cap \mathcal{B}(s).$$

The \textbf{union} of $\mathcal{A}$ and $\mathcal{B}$ is defined as the subspace-array sending $s\in S$ to $\mathcal{A}(s)\cup \mathcal{B}(s)$.

The \textbf{set-theoretic norm} of $\mathcal{A}$ is defined as
$$\mathcal{A}\cdot_\cap \mathcal{A}.$$
Thus, $\mathcal{A}$ represents a cover of $X$ if and only if its set-theoretic norm is $X$.
\end{Definition}

Subspace-arrays indexed by the same set $S$ become a \textbf{partially ordered set}:
the inequality $\mathcal{A}\leq \mathcal{B}$ means $\mathcal{A}(s)\subset \mathcal{B}(s)$ for all $s\in S$.

There is a \textbf{function} denoted by $\array(?)$ from $2^X$ to subspace-arrays indexed by $S$. Namely, $\array(Y)$ is the constant array with entries equal to $Y\subset X$. Notice $\array(X)$ is the \textbf{maximal array}.

Subsets $B$ of $X$ can be thought of as scalars. Thus,
$B\cdot \mathcal{A}$ is defined as the subspace-array sending $s\in S$ to $B\cap \mathcal{A}(s)$. Notice the analogs of vector identities hold:
$$C\cdot (\mathcal{A}\cup \mathcal{B})=C\cdot \mathcal{A}\cup C\cdot \mathcal{B},$$
$$(B\cap C)\cdot \mathcal{A}=B\cdot (C\cdot \mathcal{A})$$
and so on.

\begin{Definition}
Given the cartesian product $S\times T$ of two index sets,
a subspace $S\times T$-matrix of $X$ is a subspace array
indexed by $S\times T$. If $\mathcal{A}$ is a subspace $S\times T$-matrix of $X$ and $\mathcal{B}$ is a subspace $T\times R$-matrix of $X$, then the \textbf{matrix $\cap$-product} of $\mathcal{A}$ and $\mathcal{B}$
is the $S\times R$-matrix whose $(s,r)$ coordinate
is the $\cap$-product of the $s$-th row of $\mathcal{A}$
and the $r$-th column of $\mathcal{B}$.
\end{Definition}

Notice $\mathcal{A}\cdot_\cap (\mathcal{B}\cup \mathcal{C})=
(\mathcal{A}\cdot_\cap \mathcal{B})\cup (\mathcal{A}\cdot_\cap \mathcal{C})$
and $(\mathcal{A}\cup \mathcal{B})\cdot_\cap \mathcal{C}=
(\mathcal{A}\cdot_\cap \mathcal{C})\cup ( \mathcal{B}\cdot_\cap \mathcal{C})$ whenever one of the sides is defined.

\begin{Corollary}
The $\cap$-product of matrices is associative:
$$(\mathcal{A}\cdot_\cap \mathcal{B})\cdot_\cap \mathcal{C}=
\mathcal{A}\cdot_\cap (\mathcal{B}\cdot_\cap \mathcal{C})$$
whenever one side is defined.
\end{Corollary}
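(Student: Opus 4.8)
The plan is to verify the associativity identity entrywise, reducing it to the distributivity of $\cap$-products over unions that was already noted just before the statement. Fix index sets $S,T,R,U$ and take $\mathcal{A}$ an $S\times T$-matrix, $\mathcal{B}$ a $T\times R$-matrix, and $\mathcal{C}$ an $R\times U$-matrix, so that both sides of the claimed equation are $S\times U$-matrices. It suffices to show that for every $(s,u)\in S\times U$ the $(s,u)$-entries agree. First I would unwind the left-hand side: the $(s,r)$-entry of $\mathcal{A}\cdot_\cap\mathcal{B}$ is $\bigcup_{t\in T}\mathcal{A}(s,t)\cap\mathcal{B}(t,r)$, and then the $(s,u)$-entry of $(\mathcal{A}\cdot_\cap\mathcal{B})\cdot_\cap\mathcal{C}$ is
$$\bigcup_{r\in R}\Bigl(\bigcup_{t\in T}\mathcal{A}(s,t)\cap\mathcal{B}(t,r)\Bigr)\cap\mathcal{C}(r,u).$$
Using that $\cap$ distributes over arbitrary unions, this equals $\bigcup_{r\in R}\bigcup_{t\in T}\bigl(\mathcal{A}(s,t)\cap\mathcal{B}(t,r)\cap\mathcal{C}(r,u)\bigr)$.

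Next I would do the symmetric computation on the right-hand side. The $(t,u)$-entry of $\mathcal{B}\cdot_\cap\mathcal{C}$ is $\bigcup_{r\in R}\mathcal{B}(t,r)\cap\mathcal{C}(r,u)$, so the $(s,u)$-entry of $\mathcal{A}\cdot_\cap(\mathcal{B}\cdot_\cap\mathcal{C})$ is
$$\bigcup_{t\in T}\mathcal{A}(s,t)\cap\Bigl(\bigcup_{r\in R}\mathcal{B}(t,r)\cap\mathcal{C}(r,u)\Bigr)=\bigcup_{t\in T}\bigcup_{r\in R}\bigl(\mathcal{A}(s,t)\cap\mathcal{B}(t,r)\cap\mathcal{C}(r,u)\bigr),$$
again by distributivity. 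Since a double union over $t\in T$ and $r\in R$ does not depend on the order in which the two index sets are traversed, the two expressions coincide, which proves the identity entrywise and hence as matrices. The remark about "whenever one side is defined" is handled by noting that the $\cap$-product of two matrices is defined exactly when the inner index sets match, and the displayed compositions are defined precisely when $\mathcal{A}$ is $S\times T$, $\mathcal{B}$ is $T\times R$, and $\mathcal{C}$ is $R\times U$ for some $S,T,R,U$; so if either side is defined, all four sets exist and both sides are defined.

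I do not expect a genuine obstacle here; this is a bookkeeping argument modeled on the proof that ordinary matrix multiplication is associative, with $+$ replaced by $\bigcup$ and $\cdot$ replaced by $\cap$, and the only ring axiom actually used is the (one- or two-sided) distributivity of $\cap$ over $\cup$, which is recorded immediately before the statement. The one point worth stating carefully is the interchange of the two unions $\bigcup_{r}\bigcup_{t}=\bigcup_{t}\bigcup_{r}$, which is just the associativity and commutativity of union applied to a family indexed by $T\times R$; after that the proof is complete. If one wants to be fully formal about the "whenever one side is defined" clause, the cleanest route is to observe that both $\cdot_\cap$-compositions in the statement parse only for the index-set pattern above, so definedness of one side forces definedness of the other and the equality then follows from the entrywise computation.
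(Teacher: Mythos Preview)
Your argument is correct: you verify the identity entrywise by unwinding both sides to the common double union $\bigcup_{t\in T}\bigcup_{r\in R}\mathcal{A}(s,t)\cap\mathcal{B}(t,r)\cap\mathcal{C}(r,u)$, using only distributivity of $\cap$ over $\bigcup$ and the interchange of two unions. This is exactly the set-theoretic transcription of the usual associativity proof for matrix multiplication over a semiring.

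The paper takes a slightly different route. Rather than computing entrywise, it uses the distributive identities recorded just before the statement to write each matrix as a union of matrices with at most one non-empty entry, and then observes that on such ``elementary'' matrices both sides are either all-empty or reduce to a single triple intersection $\mathcal{A}(s,t)\cap\mathcal{B}(t,r)\cap\mathcal{C}(r,u)$. Your approach is the more explicit bookkeeping computation; the paper's is a linearity-style reduction. Both rely on the same underlying fact (distributivity of $\cap$ over $\cup$), and neither has any real advantage over the other here --- the paper's version is marginally shorter, while yours makes the analogy with ordinary matrix associativity more transparent and handles the ``whenever one side is defined'' clause more carefully.
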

\begin{proof}
In view of formulae above, it suffices to consider matrices with at most one entry being non-empty as each matrix is a union of such matrices. In that case either both sides are matrices with empty entries or the unique non-empty entry is the intersection of corresponding entries of the three matrices.
\end{proof}

The $\cap$-dot product is mostly useful to study $r$-disjointness, hence
concepts related to asymptotic dimension. There is another analog of dot product, $\times$-dot product that is useful in studying 
$r$-disjointness in cartesian products.

\begin{Definition}
Given two subspace-arrays $\mathcal{A}$ in $X$ and $\mathcal{B}$ in $Y$, both indexed by the same set $S$, the \textbf{$\times$-dot product} of them is defined as
$$\mathcal{A}\cdot_\times \mathcal{B}=\bigcup\limits_{s\in S}\mathcal{A}(s)\times \mathcal{B}(s).$$
\end{Definition}

Similarly to the $\cap$-dot product one can now define the $\times$-dot product of set-valued matrices.

The role of the identity matrix (given an index set $S$) is played by 
the square matrix $\mathcal{I}_X$ whose all diagonal entries are equal to $X$ and all other entries equal the empty set.

\begin{Proposition}
If each column of a matrix $\mathcal{M}$ represents a cover of $X$
and an array $\mathcal{A}$ also represents a cover of $X$,
then $\mathcal{M}\cdot_\cap \mathcal{A}^T$ represents a cover of $X$.
\end{Proposition}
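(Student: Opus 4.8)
The plan is to unwind the definitions and then argue pointwise. First I would fix conventions: the matrix $\mathcal{M}$ is indexed by some product $S\times T$, a \emph{column} of $\mathcal{M}$ is one of the $S$-indexed arrays $s\mapsto\mathcal{M}(s,t)$ for fixed $t\in T$, and $\mathcal{A}$ is an array indexed by $T$, so that $\mathcal{A}^T$ is the $T\times\{*\}$-matrix whose single column is $\mathcal{A}$. Recall that an array $\mathcal{C}$ indexed by $S$ represents a cover of $X$ exactly when its set-theoretic norm $\mathcal{C}\cdot_\cap\mathcal{C}=\bigcup_{s\in S}\mathcal{C}(s)$ equals $X$. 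Thus the hypotheses read: $\bigcup_{t\in T}\mathcal{A}(t)=X$, and for every $t\in T$ one has $\bigcup_{s\in S}\mathcal{M}(s,t)=X$.

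Next I would write the product explicitly. By definition of the matrix $\cap$-product, the $s$-th entry of $\mathcal{M}\cdot_\cap\mathcal{A}^T$ is the $\cap$-dot product of the $s$-th row of $\mathcal{M}$ with $\mathcal{A}$, that is $\bigcup_{t\in T}\mathcal{M}(s,t)\cap\mathcal{A}(t)$, so the set-theoretic norm of $\mathcal{M}\cdot_\cap\mathcal{A}^T$ is $\bigcup_{s\in S}\bigcup_{t\in T}\mathcal{M}(s,t)\cap\mathcal{A}(t)$. Now there are two equivalent ways to finish. Pointwise: given $x\in X$, choose $t_0\in T$ with $x\in\mathcal{A}(t_0)$ since $\mathcal{A}$ covers $X$, then choose $s_0\in S$ with $x\in\mathcal{M}(s_0,t_0)$ since the $t_0$-th column of $\mathcal{M}$ covers $X$; hence $x\in\mathcal{M}(s_0,t_0)\cap\mathcal{A}(t_0)$, which lies in the set-theoretic norm displayed above. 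Algebraically, interchanging the unions and using distributivity of $\cap$ over $\bigcup$ gives
$$\bigcup_{s\in S}\bigcup_{t\in T}\mathcal{M}(s,t)\cap\mathcal{A}(t)=\bigcup_{t\in T}\Bigl(\bigcup_{s\in S}\mathcal{M}(s,t)\Bigr)\cap\mathcal{A}(t)=\bigcup_{t\in T}X\cap\mathcal{A}(t)=X.$$
Either way the set-theoretic norm of $\mathcal{M}\cdot_\cap\mathcal{A}^T$ is $X$, which is exactly the assertion.

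I do not expect a genuine obstacle here: the content is purely definitional. The only thing demanding care is the bookkeeping of the index sets and of which coordinate of $\mathcal{M}$ is being held fixed in a "column", so that the product $\mathcal{M}\cdot_\cap\mathcal{A}^T$ is correctly identified with the array $s\mapsto\bigcup_{t\in T}\mathcal{M}(s,t)\cap\mathcal{A}(t)$ before the one-line computation (or the two-step pointwise chase) is carried out.
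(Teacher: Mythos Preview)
Your argument is correct: unwinding the definitions and either chasing a point or applying distributivity gives the result immediately.

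The paper takes a different, deliberately ``algebraic'' route. It first observes that the hypothesis ``each column of $\mathcal{M}$ covers $X$'' is equivalent to $\mathcal{M}^T\cdot_\cap\mathcal{M}\ge\mathcal{I}_X$, and then computes the set-theoretic norm of $\mathcal{M}\cdot_\cap\mathcal{A}^T$ purely within the matrix formalism:
\[
(\mathcal{M}\cdot_\cap\mathcal{A}^T)^T\cdot_\cap(\mathcal{M}\cdot_\cap\mathcal{A}^T)
=\mathcal{A}\cdot_\cap(\mathcal{M}^T\cdot_\cap\mathcal{M})\cdot_\cap\mathcal{A}^T
\ge\mathcal{A}\cdot_\cap\mathcal{I}_X\cdot_\cap\mathcal{A}^T
=\mathcal{A}\cdot_\cap\mathcal{A}^T=X,
\]
using associativity of the $\cap$-product (established just before) and monotonicity. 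This is the set-theoretic analogue of the linear-algebra identity $\|Mv\|^2=v^{T}M^{T}Mv$. Your approach is more elementary and self-contained, requiring no prior lemmas; the paper's approach is there to showcase that the matrix algebra of sets behaves like ordinary matrix algebra, which is the whole point of the surrounding section.
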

\begin{proof}
Notice that each column of a matrix $\mathcal{M}$ represents a cover of $X$
if and only if $\mathcal{M}^T\cdot_\cap \mathcal{M}\ge \mathcal{I}_X $,
i.e. each diagonal entry of $\mathcal{M}^T \cdot_\cap \mathcal{M}$ equals $X$. Therefore
$$(\mathcal{M}\cdot_\cap \mathcal{A}^T)^T\cdot_\cap (\mathcal{M}\cdot_\cap \mathcal{A}^T) =\mathcal{A}\cdot_\cap (\mathcal{M}^T \cdot_\cap \mathcal{M})\cdot_\cap \mathcal{A}^T\ge$$
$$ \mathcal{A}\cdot_\cap (\mathcal{I}_X)\cdot_\cap \mathcal{A}^T=
\mathcal{A}\cdot_\cap \mathcal{A}^T=X.$$
\end{proof}

A similar result holds also for the $\times$-product but the proof is a bit different.

\begin{Proposition}
If each column of a matrix $\mathcal{M}$ represents a cover of $X$
and an array $\mathcal{A}$ represents a cover of $Y$,
then $\mathcal{M}\cdot_\times \mathcal{A}^T$ represents a cover of $X\times Y$.
\end{Proposition}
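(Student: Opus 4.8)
The plan is to prove this by a direct element chase rather than by transporting an algebraic identity, because the set-theoretic norm of a $\times$-product lives in $X\times Y$, and there is no mixed ``identity matrix'' linking $\cdot_\cap$ and $\cdot_\times$ that would reproduce the cancellation used in the previous proposition (the computation $(\mathcal{M}\cdot_\cap \mathcal{A}^T)^T\cdot_\cap(\mathcal{M}\cdot_\cap \mathcal{A}^T)=\mathcal{A}\cdot_\cap(\mathcal{M}^T\cdot_\cap \mathcal{M})\cdot_\cap \mathcal{A}^T\ge \mathcal{A}\cdot_\cap \mathcal{I}_X\cdot_\cap \mathcal{A}^T$ has no literal analogue here).

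First I would pin down the index sets. Writing $\mathcal{M}$ as an $S\times T$-matrix, the product $\mathcal{M}\cdot_\times \mathcal{A}^T$ is defined precisely when $\mathcal{A}$ is indexed by $T$, so that $\mathcal{A}^T$ is the corresponding column; then $\mathcal{M}\cdot_\times \mathcal{A}^T$ is an array indexed by $S$ whose $s$-th entry, by definition of the $\times$-dot product of the $s$-th row of $\mathcal{M}$ with the column $\mathcal{A}^T$, is $\bigcup_{t\in T}\bigl(\mathcal{M}(s,t)\times \mathcal{A}(t)\bigr)$. Hence its set-theoretic norm is $\bigcup_{s\in S}\bigcup_{t\in T}\mathcal{M}(s,t)\times \mathcal{A}(t)$, and the claim reduces to showing this union is all of $X\times Y$.

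Next, given $(x,y)\in X\times Y$: since $\mathcal{A}$ represents a cover of $Y$, pick $t\in T$ with $y\in \mathcal{A}(t)$; since the $t$-th column of $\mathcal{M}$ represents a cover of $X$, pick $s\in S$ with $x\in \mathcal{M}(s,t)$. Then $(x,y)\in \mathcal{M}(s,t)\times \mathcal{A}(t)$, which sits inside the union above. The reverse inclusion is automatic, as every entry of $\mathcal{M}\cdot_\times \mathcal{A}^T$ is a subset of $X\times Y$.

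There is no real obstacle here; the only point worth flagging is the order of the quantifiers — one first chooses $t$ from the cover $\mathcal{A}$ of $Y$ and only then chooses $s$ from the $t$-th column of $\mathcal{M}$ — which is exactly why this argument cannot be repackaged as the same associativity-plus-$\mathcal{I}_X$ computation that proved the $\cdot_\cap$ version, and why ``the proof is a bit different''.
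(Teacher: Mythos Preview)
Your proof is correct and is essentially identical to the paper's own argument: both perform a direct element chase, first choosing an index from $\mathcal{A}$ covering the $Y$-coordinate and then an index from the corresponding column of $\mathcal{M}$ covering the $X$-coordinate. Your write-up is in fact more explicit about the index conventions and the computation of the set-theoretic norm than the paper's terse version.
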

\begin{proof}
If $(x,y)\in X\times Y$, then there is an index $s\in S$, $S$ being
the index-set of $\mathcal{A}$, with $y\in \mathcal{A}(s)$.
In turn, there is an index $r\in R$, $R\times S$ being the index set of
$\mathcal{M}$, with $x\in \mathcal{M}(r,s)$.
Now, $(x,y)\in (\mathcal{M}\cdot_\times \mathcal{A}^T)(r)$.
\end{proof}

The following exercises may lead to a more interesting course in set theory.

\begin{Exercise}
Show that an array $\mathcal{A}$ represents a cover of $X$
if and only if the function $f:(2^X)^S\to 2^X$
defined by
$$f(\mathcal{X})=\mathcal{A}\cdot_\cap \mathcal{X}$$
is a surjection.
\end{Exercise}

\begin{Exercise}
Characterize arrays $\mathcal{A}$ representing equivalence relations on $X$ in terms of $\mathcal{A}\cdot_\cap \mathcal{A}^T$
and $\mathcal{A}^T\cdot_\cap \mathcal{A}$.
\end{Exercise}

\begin{Exercise}
Characterize square matrices $\mathcal{M}$ such that
each equation
$$\mathcal{M}\cdot_\cap \mathcal{X}^T=\mathcal{B}^T$$
has a solution and it is unique.
\end{Exercise}

\section{Matrix algebra in metric spaces}

In contrast to most papers on coarse geometry, we do not restrict ourselves to metric spaces only. It is more convenient to consider a wider class of spaces.

\begin{Definition}
An \textbf{$\infty$-pseudo-metric space} $X$ is a set with a distance function $d$ that satisfies weaker axioms than a metric:\\
1. $d(x,x)=0$ and $d(x,y)=d(y,x)\ge 0$ for all $x,y\in X$,\\
2. $d(x,y)$ is allowed to assume the value of $\infty$,\\
3. $d(x,y)\leq d(x,z)+d(z,y)$ if $d(x,z), d(z,y) < \infty$.
\end{Definition}

The reason that $\infty$-pseudo-metric spaces are more useful than metric spaces is that $\infty$-pseudo-metric spaces have better categorical properties. Namely, it is easy to define the disjoint union of $\infty$-pseudo-metric spaces by requiring that distances between points in different summands are equal to $\infty$. Also, one can easily define arbitrary cartesian products of $\infty$-pseudo-metric spaces with either $l_1$-$\infty$-pseudo-metric or the supremum-$\infty$-pseudo-metric.

If $X$ is an $\infty$-pseudo-metric space, then each subspace array of $X$ has its \textbf{metric-norm} defined as supremum of diameters of its coordinates (we assume the diameter of the empty set is $0$).
Thus, $\mathcal{A}$ represents a \textbf{uniformly bounded family} of subsets of $X$ if and only if its metric-norm is finite.

We can apply the usual functions on subsets of $X$ to subspace arrays of $X$. Thus, the \textbf{ball of radius} $0 < r < \infty$, $B(\mathcal{A},r)$, around $\mathcal{A}$ is defined as the array of balls of radius $r$ around each coordinate of $\mathcal{A}$.

\begin{Definition}
Let $r > 0$. Two subsets $C$ and $D$ of an $\infty$-pseudo-metric space $X$ are \textbf{scale-$r$-disjoint} if their $r$-balls are disjoint:
$$B(C,r)\cap B(D,r)=\emptyset.$$
\end{Definition}

\begin{Remark}
Notice the change of definition of $r$-disjointness from standard literature in the above definition. Essentially, it allows better calculations in matrix algebra than the standard one. That's why we emphasize it by the wording of \textbf{scale-$r$-disjoint} instead of \textbf{$r$-disjoint}.
\end{Remark}

\begin{Lemma}\label{rBallsOfDotProductOfMatrices}
Suppose $X$ is an $\infty$-pseudo-metric space and $r > 0$.
If $\mathcal{A}$ is a subspace $S\times T$-matrix of $X$ and $\mathcal{B}$ is a subspace $T\times R$-matrix of $X$, then
$$B(\mathcal{A}\cdot_\cap \mathcal{B},r)\leq B(\mathcal{A},r)\cdot_\cap B(\mathcal{B},r).$$
\end{Lemma}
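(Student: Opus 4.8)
The plan is to unwind both sides on the level of points, reducing the matrix inequality to a statement about the individual $\cap$-dot products of rows and columns. Fix indices $s\in S$ and $\rho\in R$; the $(s,\rho)$-entry of $\mathcal{A}\cdot_\cap\mathcal{B}$ is $\bigcup_{t\in T}\mathcal{A}(s,t)\cap\mathcal{B}(t,\rho)$, so the $(s,\rho)$-entry of the left-hand side is the $r$-ball around that union, which equals $\bigcup_{t\in T}B(\mathcal{A}(s,t)\cap\mathcal{B}(t,\rho),r)$ since balls commute with unions. On the right-hand side, the $(s,\rho)$-entry of $B(\mathcal{A},r)\cdot_\cap B(\mathcal{B},r)$ is $\bigcup_{t\in T}B(\mathcal{A}(s,t),r)\cap B(\mathcal{B}(t,\rho),r)$. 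So it suffices to prove, for fixed subsets $C,D\subset X$, the elementary inclusion
$$B(C\cap D,r)\subseteq B(C,r)\cap B(D,r).$$

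This last inclusion is immediate: if $x\in B(C\cap D,r)$ then there is $z\in C\cap D$ with $d(x,z)<r$ (or $\le r$, whichever convention $B(\cdot,r)$ uses); since $z\in C$ we get $x\in B(C,r)$, and since $z\in D$ we get $x\in B(D,r)$. Taking the union over $t\in T$ of the inclusions $B(\mathcal{A}(s,t)\cap\mathcal{B}(t,\rho),r)\subseteq B(\mathcal{A}(s,t),r)\cap B(\mathcal{B}(t,\rho),r)$ and noting that a union of subsets is a subset of the union of the larger sets gives
$$\bigcup_{t\in T}B(\mathcal{A}(s,t)\cap\mathcal{B}(t,\rho),r)\subseteq\bigcup_{t\in T}\bigl(B(\mathcal{A}(s,t),r)\cap B(\mathcal{B}(t,\rho),r)\bigr),$$
which is exactly the desired entrywise inequality. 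Since $s$ and $\rho$ were arbitrary, $B(\mathcal{A}\cdot_\cap\mathcal{B},r)\le B(\mathcal{A},r)\cdot_\cap B(\mathcal{B},r)$.

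There is essentially no obstacle here; the only point requiring a moment's care is the bookkeeping that $B(\bigcup_\alpha E_\alpha,r)=\bigcup_\alpha B(E_\alpha,r)$, which holds verbatim from the definition of the $r$-ball around a set, and that in general the reverse inclusion $B(C,r)\cap B(D,r)\subseteq B(C\cap D,r)$ fails — so one genuinely gets only $\le$, not equality, which is consistent with the statement. It is worth remarking that the inclusion cannot be upgraded to equality precisely because two sets can have disjoint intersection while their $r$-balls meet; this is the phenomenon the notion of scale-$r$-disjointness is designed to track, and it is why Lemma~\ref{rBallsOfDotProductOfMatrices} is stated as an inequality of arrays rather than an identity.
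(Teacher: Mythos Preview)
Your proof is correct and follows essentially the same route as the paper's: both reduce to a single entry (the paper phrases this as ``it suffices to consider the case of $\mathcal{A}$ being a row-array and $\mathcal{B}$ being a column-array''), then argue pointwise via the trivial inclusion $B(C\cap D,r)\subseteq B(C,r)\cap B(D,r)$. Your write-up is more explicit about the bookkeeping (balls commuting with unions, the failure of the reverse inclusion), but the underlying argument is identical.
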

\begin{proof}
It suffices to consider the case of $\mathcal{A}$ being a row-array
and $\mathcal{B}$ being a column-array.
Notice that $d_X(x,\bigcup\limits_{s\in S}\mathcal{A}(s)\cap \mathcal{B}(s)) < r$ if and only if there is $t\in S$ such that
$d_X(x,\mathcal{A}(t)\cap \mathcal{B}(t)) < r$
which implies $d_X(x,\mathcal{A}(t)) < r$
and $d_X(x,\mathcal{B}(t)) < r$.
\end{proof}

\begin{Lemma}\label{rBallsOfTimesProductOfMatrices}
Suppose $X$ and $Y$ are $\infty$-pseudo-metric spaces and $r > 0$.
If $\mathcal{A}$ is a subspace $S\times T$-matrix of $X$ and $\mathcal{B}$ is a subspace $T\times R$-matrix of $Y$, then
$$B(\mathcal{A}\cdot_\times \mathcal{B},r)\leq B(\mathcal{A},r)\cdot_\times B(\mathcal{B},r)$$
in both the $l_1$-$\infty$-pseudo-metric or the supremum-$\infty$-pseudo-metric.
\end{Lemma}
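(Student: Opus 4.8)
The plan is to reduce, just as in the proof of Lemma \ref{rBallsOfDotProductOfMatrices}, to the case where $\mathcal{A}$ is a single row-array indexed by $S$ (i.e. a $\{s_0\}\times S$-matrix) and $\mathcal{B}$ is a single column-array indexed by $S$, so that $\mathcal{A}\cdot_\times \mathcal{B}$ is a subspace-array in $X\times Y$ with just one coordinate, namely $\bigcup_{s\in S}\mathcal{A}(s)\times \mathcal{B}(s)$. The general statement follows because the matrix $\times$-product is computed entry-by-entry from row/column $\times$-dot products, and taking $r$-balls and the partial order $\leq$ are both checked coordinatewise.

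Next I would unwind both sides at a point $(x,y)\in X\times Y$. Membership of $(x,y)$ in $B\big(\mathcal{A}\cdot_\times\mathcal{B},\,r\big)$ means $\dist\big((x,y),\,\bigcup_{s}\mathcal{A}(s)\times\mathcal{B}(s)\big)<r$ in whichever product-$\infty$-pseudo-metric we use; since the union is over $s$, this is equivalent to the existence of some $t\in S$ with $\dist\big((x,y),\,\mathcal{A}(t)\times\mathcal{B}(t)\big)<r$. On the other side, $(x,y)\in \big(B(\mathcal{A},r)\cdot_\times B(\mathcal{B},r)\big)$ means there is some $t\in S$ with $x\in B(\mathcal{A}(t),r)$ and $y\in B(\mathcal{B}(t),r)$, i.e. $\dist(x,\mathcal{A}(t))<r$ and $\dist(y,\mathcal{B}(t))<r$. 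So the inclusion I must verify is: if for some $t$ the point $(x,y)$ is within $r$ of the product set $\mathcal{A}(t)\times\mathcal{B}(t)$, then $x$ is within $r$ of $\mathcal{A}(t)$ and $y$ is within $r$ of $\mathcal{B}(t)$ (with the same index $t$, which is automatic since we keep $t$ fixed).

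For the $l_1$-product metric, $\dist_{X\times Y}\big((x,y),(a,b)\big)=\dist_X(x,a)+\dist_Y(y,b)$, so if this is $<r$ for some $(a,b)\in\mathcal{A}(t)\times\mathcal{B}(t)$ then each summand is $<r$, giving $\dist_X(x,\mathcal{A}(t))<r$ and $\dist_Y(y,\mathcal{B}(t))<r$; passing to the infimum over $(a,b)$ is harmless. For the supremum product metric, $\dist_{X\times Y}\big((x,y),(a,b)\big)=\max\{\dist_X(x,a),\dist_Y(y,b)\}<r$ likewise forces both coordinates to be $<r$. In both cases one must be mildly careful that the value $\infty$ may occur, but a sum or max being strictly less than the finite number $r$ already rules that out, so the triangle-type manipulations are legitimate. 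This is essentially the only subtlety, and it is a minor one; I do not anticipate a genuine obstacle — the lemma is the $\times$-analog of the already-proved Lemma \ref{rBallsOfDotProductOfMatrices}, and the argument is the same bookkeeping with $\cap$ replaced by $\times$ and the product-metric inequality used in place of the triangle inequality.
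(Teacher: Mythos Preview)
Your proof is correct and follows essentially the same route as the paper's own proof: reduce to the row/column case, then observe that $d_{X\times Y}((x,y),\mathcal{A}(t)\times\mathcal{B}(t))<r$ forces $d_X(x,\mathcal{A}(t))<r$ and $d_Y(y,\mathcal{B}(t))<r$ in both product metrics. Your write-up is somewhat more detailed than the paper's, but the argument is identical.
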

\begin{proof}
It suffices to consider the case of $\mathcal{A}$ being a row-array
and $\mathcal{B}$ being a column-array.
Notice that $d_{X\times Y}((x,y),\bigcup\limits_{s\in S}\mathcal{A}(s)\times \mathcal{B}(s)) < r$ if and only if there is $t\in S$ such that
$d_{X\times Y}((x,y),\mathcal{A}(t)\times \mathcal{B}(t)) < r$
which implies $d_X(x,\mathcal{A}(t)) < r$
and $d_Y(y,\mathcal{B}(t)) < r$ in both the $l_1$-metric and in the sup-metric on $X\times Y$.
\end{proof}

\begin{Definition}
Suppose $X$ is an $\infty$-pseudo-metric space, $A$ is its subspace, and $r > 0$. The \textbf{array of scale-$r$-components} of $A$ is the array indexed by $X$ whose $x$-th coordinate is the subspace of $X$ consisting of all points $y$ that can be connected to $x$ by a scale-$r$-chain in $A$, i.e. a sequence of points $x_0=y, \ldots, x_n=x$ in $A$ such that $B(x_i,r)\cap B(x_{i+1},r) \ne\emptyset$ for each $0\leq i < n$.

$A$ is of \textbf{scale-$r$-dimension $0$} if its array of scale-$r$-components is of finite metric-norm.

$A$ is of finite \textbf{scale-$r$-dimension} if it can be represented as a finite union of subspaces of scale-$r$-dimension $0$.
\end{Definition}

One can define scale-$r$-disjoint arrays of subspaces as those whose coordinates are scale-$r$-disjoint, i.e. their $r$-balls are disjoint. However, from the point of view of matrix algebra, the following definition makes more sense:

\begin{Definition}
Suppose $X$ is an $\infty$-pseudo-metric space and $r > 0$.
An array $\mathcal{A}$ is \textbf{scale-$r$-disjoint} if
$$B(\mathcal{A},r)^T\cdot_\cap B(\mathcal{A},r)$$
is a diagonal matrix, i.e. its entries off the diagonal are empty.
\end{Definition}

\begin{Definition}
Suppose $X$ is an $\infty$-pseudo-metric space and $r > 0$.
Two arrays $\mathcal{A}$ and $\mathcal{B}$ are \textbf{$r$-orthogonal} if
they are indexed by the same set and the $\cap$-dot product of their $r$-balls is empty.
\end{Definition}

\begin{Lemma}\label{OrthogonalMatrices}
Suppose $X$ is an $\infty$-pseudo-metric space,  $\mathcal{M}$ is a  subspace matrix in $X$, and $r > 0$.
The following conditions are equivalent:\\
1. Each column of $\mathcal{M}$ is scale-$r$-disjoint,\\
2. Rows of $\mathcal{M}$ are mutually $r$-orthogonal,\\
3. $B(\mathcal{M},r)\cdot_\cap B(\mathcal{M}^T,r)$ is a diagonal matrix.
\end{Lemma}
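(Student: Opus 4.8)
The plan is to unwind each of the three conditions into one and the same elementary statement about the entries of $\mathcal{M}$, after which the equivalence is immediate. Write $\mathcal{M}$ as an $R\times T$-matrix, so its rows are the $T$-indexed arrays $j\mapsto \mathcal{M}(i,j)$ (one for each $i\in R$) and its columns are the $R$-indexed arrays $i\mapsto \mathcal{M}(i,j)$ (one for each $j\in T$). The target statement is
$$(\star)\qquad B(\mathcal{M}(i,j),r)\cap B(\mathcal{M}(i',j),r)=\emptyset\quad\text{for all }j\in T\text{ and all }i\ne i'\text{ in }R.$$
I will show that each of (1), (2), (3) is equivalent to $(\star)$.

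First I would handle (1). Fix a column, i.e. the array $\mathcal{C}_j$ indexed by $R$ with $\mathcal{C}_j(i)=\mathcal{M}(i,j)$. By definition $\mathcal{C}_j$ is scale-$r$-disjoint iff $B(\mathcal{C}_j,r)^T\cdot_\cap B(\mathcal{C}_j,r)$ is a diagonal matrix, and, unwinding the definition of the matrix $\cap$-product, its $(i,i')$-entry is exactly $B(\mathcal{M}(i,j),r)\cap B(\mathcal{M}(i',j),r)$. Hence (1) asserts precisely that for every $j\in T$ these entries vanish whenever $i\ne i'$, which is $(\star)$. Next, for (2): given $i\ne i'$, the rows $j\mapsto\mathcal{M}(i,j)$ and $j\mapsto\mathcal{M}(i',j)$ are $r$-orthogonal iff the $\cap$-dot product of their $r$-balls, namely $\bigcup_{j\in T}B(\mathcal{M}(i,j),r)\cap B(\mathcal{M}(i',j),r)$, is empty; since a union of sets is empty iff every one of them is empty, this is again $(\star)$.

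Finally (3). The operation $B(-,r)$ is applied entry by entry, so it commutes with transposition, giving $B(\mathcal{M}^T,r)=B(\mathcal{M},r)^T$. Consequently the $(i,i')$-entry of $B(\mathcal{M},r)\cdot_\cap B(\mathcal{M}^T,r)$ is the $\cap$-dot product of the $i$-th row and the $i'$-th row of $B(\mathcal{M},r)$, that is $\bigcup_{j\in T}B(\mathcal{M}(i,j),r)\cap B(\mathcal{M}(i',j),r)$. This matrix is diagonal exactly when that union is empty for every pair $i\ne i'$, i.e. exactly when $(\star)$ holds. Combining the three equivalences finishes the argument.

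There is no genuine obstacle here; the content is purely bookkeeping. The only points requiring care are matching rows against columns correctly inside the matrix $\cap$-products and recording explicitly that $B(-,r)$ commutes with transposition. It is also worth flagging that one does not invoke Lemma~\ref{rBallsOfDotProductOfMatrices} (which supplies only an inequality): the matrices $B(\mathcal{M},r)\cdot_\cap B(\mathcal{M}^T,r)$ and $B(\mathcal{C}_j,r)^T\cdot_\cap B(\mathcal{C}_j,r)$ are computed directly from the definition of the $\cap$-product, never by first forming a product inside a ball.
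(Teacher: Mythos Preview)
Your proposal is correct and follows exactly the same approach as the paper: the paper's proof simply asserts that all three conditions ``mean that $r$-balls of different entries in the same column are disjoint,'' which is precisely your statement $(\star)$. You have merely written out in full the bookkeeping that the paper leaves to the reader.
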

\begin{proof}
1)$\iff$2) as both conditions mean that $r$-balls of different entries in the same column are disjoint. The same is true of 3).

\end{proof}

\begin{Definition}
A matrix satisfying one of the conditions of \ref{OrthogonalMatrices}
is called \textbf{$r$-orthogonal}.
\end{Definition}

\begin{Lemma}
Suppose $X$ is an $\infty$-pseudo-metric space and $r > 0$.
The $\cap$-product of two $r$-orthogonal matrices is $r$-orthogonal.
\end{Lemma}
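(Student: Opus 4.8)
The plan is to stay inside the matrix algebra and use the characterization of $r$-orthogonality from condition 3 of Lemma \ref{OrthogonalMatrices}: a subspace matrix $\mathcal{M}$ is $r$-orthogonal precisely when $B(\mathcal{M},r)\cdot_\cap B(\mathcal{M},r)^T$ is a diagonal matrix (here I use that $B(\mathcal{M}^T,r)=B(\mathcal{M},r)^T$, since forming $r$-balls and transposing are both coordinatewise operations). So let $\mathcal{M}$ be an $r$-orthogonal $S\times T$-matrix and $\mathcal{N}$ an $r$-orthogonal $T\times R$-matrix, and put $\mathcal{P}=\mathcal{M}\cdot_\cap\mathcal{N}$; the goal is to show that $B(\mathcal{P},r)\cdot_\cap B(\mathcal{P},r)^T$ is diagonal.

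First I would record two elementary facts, both immediate coordinatewise checks (and both already used implicitly in the proof of the Proposition preceding Lemma \ref{rBallsOfDotProductOfMatrices}): the $\cap$-product is monotone, i.e. $\mathcal{A}\le\mathcal{A}'$ and $\mathcal{B}\le\mathcal{B}'$ imply $\mathcal{A}\cdot_\cap\mathcal{B}\le\mathcal{A}'\cdot_\cap\mathcal{B}'$; and transposition reverses $\cap$-products, $(\mathcal{A}\cdot_\cap\mathcal{B})^T=\mathcal{B}^T\cdot_\cap\mathcal{A}^T$. Applying Lemma \ref{rBallsOfDotProductOfMatrices} to $\mathcal{P}$ and to $\mathcal{P}^T=\mathcal{N}^T\cdot_\cap\mathcal{M}^T$ gives
$$B(\mathcal{P},r)\le B(\mathcal{M},r)\cdot_\cap B(\mathcal{N},r),\qquad B(\mathcal{P},r)^T\le B(\mathcal{N},r)^T\cdot_\cap B(\mathcal{M},r)^T,$$
and then, by monotonicity together with associativity of $\cdot_\cap$ (the Corollary),
$$B(\mathcal{P},r)\cdot_\cap B(\mathcal{P},r)^T\le B(\mathcal{M},r)\cdot_\cap\bigl(B(\mathcal{N},r)\cdot_\cap B(\mathcal{N},r)^T\bigr)\cdot_\cap B(\mathcal{M},r)^T.$$

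The one point requiring a little care is absorbing the middle factor. Since $\mathcal{N}$ is $r$-orthogonal, $B(\mathcal{N},r)\cdot_\cap B(\mathcal{N},r)^T$ is a diagonal $T\times T$-matrix, and any diagonal matrix $\mathcal{D}$ satisfies $\mathcal{D}\le\mathcal{I}_X$ (its off-diagonal entries are empty, its diagonal entries are subsets of $X$). Using monotonicity once more together with $\mathcal{A}\cdot_\cap\mathcal{I}_X=\mathcal{A}$ and $\mathcal{I}_X\cdot_\cap\mathcal{B}=\mathcal{B}$, the right-hand side is $\le B(\mathcal{M},r)\cdot_\cap B(\mathcal{M},r)^T$, which is diagonal because $\mathcal{M}$ is $r$-orthogonal. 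Hence $B(\mathcal{P},r)\cdot_\cap B(\mathcal{P},r)^T$ is dominated by a diagonal matrix, so all its off-diagonal entries are empty, i.e. $\mathcal{P}$ is $r$-orthogonal.

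I do not expect a real obstacle here; the argument is pure bookkeeping, and the only things to watch are keeping the three index sets $S,T,R$ aligned and remembering that $B(-,r)$ commutes with transposition while $(-)^T$ reverses $\cdot_\cap$. For readers who prefer a hands-on version one can argue elementwise on a fixed column of $\mathcal{P}$: if a point $x$ lies in the $r$-ball of both the $s$- and $s'$-entries of that column, unwinding the definition of $\cdot_\cap$ produces indices $\tau,\tau'\in T$ with $x$ in the $r$-balls of $\mathcal{N}(\tau,\rho)$ and $\mathcal{N}(\tau',\rho)$, forcing $\tau=\tau'$ by scale-$r$-disjointness of that column of $\mathcal{N}$, and then $x$ in the $r$-balls of $\mathcal{M}(s,\tau)$ and $\mathcal{M}(s',\tau)$, forcing $s=s'$ by scale-$r$-disjointness of that column of $\mathcal{M}$ — the same proof written out.
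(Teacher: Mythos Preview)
Your proof is correct and follows essentially the same route as the paper: use $(\mathcal{M}\cdot_\cap\mathcal{N})^T=\mathcal{N}^T\cdot_\cap\mathcal{M}^T$, apply Lemma~\ref{rBallsOfDotProductOfMatrices} together with monotonicity and associativity to bound $B(\mathcal{P},r)\cdot_\cap B(\mathcal{P},r)^T$ by $B(\mathcal{M},r)\cdot_\cap\bigl(B(\mathcal{N},r)\cdot_\cap B(\mathcal{N},r)^T\bigr)\cdot_\cap B(\mathcal{M},r)^T$, absorb the diagonal middle factor, and conclude. The paper phrases the absorption step as ``$\mathcal{D}\cdot_\cap\mathcal{C}\le\mathcal{C}$ for diagonal $\mathcal{D}$'' rather than your $\mathcal{D}\le\mathcal{I}_X$, but these are the same observation.
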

\begin{proof}
Since $(\mathcal{M}\cdot_\cap \mathcal{A})^T=\mathcal{A}^T\cdot_\cap\mathcal{M}^T,$
$$B(\mathcal{M}\cdot_\cap \mathcal{A},r)\cdot_\cap B((\mathcal{M}\cdot_\cap \mathcal{A})^T,r)=B(\mathcal{M}\cdot_\cap \mathcal{A},r)\cdot_\cap B(\mathcal{A}^T\cdot_\cap\mathcal{M}^T,r)$$
$$\leq B(\mathcal{M},r)\cdot_\cap B(\mathcal{A},r)\cdot_\cap B(\mathcal{A}^T,r)\cdot_\cap B(\mathcal{M}^T,r)$$
$$\leq B(\mathcal{M},r)\cdot_\cap B(\mathcal{M}^T,r)$$
and a matric less than or equal to a diagonal matrix is diagonal.
Notice that we used $\mathcal{D}\cdot_\cap \mathcal{C}\leq \mathcal{C}$ if $\mathcal{D}$ is a diagonal square matrix.
\end{proof}

\begin{Corollary}
Suppose $X$ is an $\infty$-pseudo-metric space and $r > 0$.
If each column of a subspace matrix $\mathcal{M}$ is scale-$r$-disjoint
and
an array $\mathcal{A}$ is scale-$r$-disjoint, then
$\mathcal{M}\cdot_\cap \mathcal{A}^T$ is scale-$r$-disjoint.
\end{Corollary}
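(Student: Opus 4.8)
The plan is to deduce this directly from the two preceding results by viewing everything through the lens of $r$-orthogonal matrices. First I would record that the hypothesis ``each column of $\mathcal{M}$ is scale-$r$-disjoint'' is, by the definition following Lemma~\ref{OrthogonalMatrices} together with condition (1) of that lemma, exactly the statement that $\mathcal{M}$ is an $r$-orthogonal matrix. Next I would reinterpret the array $\mathcal{A}$ (say indexed by $T$, so that $\mathcal{M}\cdot_\cap\mathcal{A}^T$ is defined with $\mathcal{M}$ an $S\times T$-matrix): regard its transpose $\mathcal{A}^T$ as a subspace matrix with a single column. Then $B(\mathcal{A}^T,r)\cdot_\cap B((\mathcal{A}^T)^T,r)=B(\mathcal{A},r)^T\cdot_\cap B(\mathcal{A},r)$, which is a diagonal matrix precisely because $\mathcal{A}$ is scale-$r$-disjoint; hence, by condition (3) of Lemma~\ref{OrthogonalMatrices}, $\mathcal{A}^T$ is an $r$-orthogonal matrix as well.

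Now I would apply the Lemma asserting that the $\cap$-product of two $r$-orthogonal matrices is $r$-orthogonal, to the pair $\mathcal{M}$ and $\mathcal{A}^T$: this yields at once that $\mathcal{M}\cdot_\cap\mathcal{A}^T$ is an $r$-orthogonal matrix. Since this matrix again has only one column, condition (1) of Lemma~\ref{OrthogonalMatrices} says exactly that its unique column, which is the array $\mathcal{M}\cdot_\cap\mathcal{A}^T$ itself, is scale-$r$-disjoint, which is the desired conclusion.

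The only point requiring a moment's care is the passage back and forth between an array and the corresponding one-column matrix: one must check that ``the array $\mathcal{C}$ is scale-$r$-disjoint'' and ``the one-column matrix $\mathcal{C}^T$ is $r$-orthogonal'' are the same assertion, and likewise for $\mathcal{A}$. Both reduce, via Lemma~\ref{OrthogonalMatrices}, to the single requirement that $B(\mathcal{C}(s),r)\cap B(\mathcal{C}(s'),r)=\emptyset$ whenever $s\neq s'$, so this is purely bookkeeping rather than a genuine obstacle. Alternatively, one can avoid the identification entirely and mimic the computation in the proof of the product lemma: applying Lemma~\ref{rBallsOfDotProductOfMatrices} twice gives $B(\mathcal{M}\cdot_\cap\mathcal{A}^T,r)\cdot_\cap B((\mathcal{M}\cdot_\cap\mathcal{A}^T)^T,r)\le B(\mathcal{M},r)\cdot_\cap\big(B(\mathcal{A}^T,r)\cdot_\cap B(\mathcal{A},r)\big)\cdot_\cap B(\mathcal{M}^T,r)$; the middle factor is diagonal, so it may be dropped (enlarging the right-hand side to $B(\mathcal{M},r)\cdot_\cap B(\mathcal{M}^T,r)$ via $\mathcal{D}\cdot_\cap\mathcal{C}\le\mathcal{C}$ for diagonal $\mathcal{D}$), and $B(\mathcal{M},r)\cdot_\cap B(\mathcal{M}^T,r)$ is itself diagonal because $\mathcal{M}$ is $r$-orthogonal; a matrix bounded above by a diagonal matrix is diagonal.
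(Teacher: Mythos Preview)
Your argument is correct and is exactly the intended one: the paper states this Corollary immediately after the Lemma that the $\cap$-product of $r$-orthogonal matrices is $r$-orthogonal and gives no separate proof, so the deduction via Lemma~\ref{OrthogonalMatrices} (identifying a scale-$r$-disjoint array with a one-column $r$-orthogonal matrix) is precisely what is meant. Your bookkeeping remark and the alternative direct computation are both fine and simply make explicit what the paper leaves to the reader.
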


\begin{Definition}
Suppose $X$ and $Y$ are sets.
Given
an array $\mathcal{A}$ in $X$ indexed by $S$ and an array $\mathcal{B}$ in $Y$ indexed by $T$, \textbf{the cartesian product}
$\mathcal{A}\times \mathcal{B}$ is the array in $X\times Y$ indexed by $S\times T$ whose value at $(s,t)\in S\times T$
is $\mathcal{A}(s)\times \mathcal{B}(t)$.

\end{Definition}

\begin{Proposition}
Suppose $X$ and $Y$ are $\infty$-pseudo-metric spaces and $r > 0$. If $A\times B\subset X\times Y$, then the scale-$r$-components array of $A\times B$
is less than or equal to the cartesian product of
the scale-$r$-components array of $A$ and the scale-$r$-components array of $B$
in either $l_1$-$\infty$-pseudo-metric or the supremum-$\infty$-pseudo-metric
on $X\times Y$.
\end{Proposition}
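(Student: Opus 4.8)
The plan is to check the asserted array-inequality coordinate by coordinate. Both arrays are indexed by $X\times Y$, so fix a point $(x,y)\in X\times Y$ and write $\mathcal{C}$, $\mathcal{D}$ for the scale-$r$-components arrays of $A$ and of $B$; what has to be shown is that the $(x,y)$-coordinate of the scale-$r$-components array of $A\times B$ is contained in $\mathcal{C}(x)\times\mathcal{D}(y)$. If $(x,y)\notin A\times B$ there is nothing to prove: any scale-$r$-chain in $A\times B$ lies entirely in $A\times B$, so it cannot end at $(x,y)$, and the $(x,y)$-coordinate on the left is empty. Hence we may assume $x\in A$ and $y\in B$.

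So let $(x',y')$ lie in the $(x,y)$-coordinate of the scale-$r$-components array of $A\times B$ and fix a scale-$r$-chain $(x_0,y_0)=(x',y'),\ldots,(x_n,y_n)=(x,y)$ in $A\times B$, so that $B((x_i,y_i),r)\cap B((x_{i+1},y_{i+1}),r)\ne\emptyset$ for every $i$. The crucial point --- already isolated in the proof of Lemma~\ref{rBallsOfTimesProductOfMatrices} --- is that in both the $l_1$- and the supremum-$\infty$-pseudo-metric on $X\times Y$ each coordinate distance is bounded by the product distance, so any $(z,w)\in B((x_i,y_i),r)\cap B((x_{i+1},y_{i+1}),r)$ witnesses simultaneously $z\in B(x_i,r)\cap B(x_{i+1},r)$ and $w\in B(y_i,r)\cap B(y_{i+1},r)$. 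Consequently the projected sequences $x_0=x',\ldots,x_n=x$ and $y_0=y',\ldots,y_n=y$ each satisfy the consecutive-ball-intersection condition, in $X$ and in $Y$ respectively.

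To finish I would note that these projected sequences actually lie in the relevant subspaces: from $(x_i,y_i)\in A\times B$ we get $x_i\in A$ and $y_i\in B$ for all $i$. Hence $x_0=x',\ldots,x_n=x$ is a genuine scale-$r$-chain in $A$, so $x'\in\mathcal{C}(x)$, and likewise $y'\in\mathcal{D}(y)$; therefore $(x',y')\in\mathcal{C}(x)\times\mathcal{D}(y)$, as required. The argument is essentially a bookkeeping exercise with the definition of scale-$r$-chain, and the only thing to be careful about is that the two product metrics behave the same way here; there is no real obstacle, the content being entirely the coordinate-domination of distances in $l_1$ and sup products already exploited in Lemma~\ref{rBallsOfTimesProductOfMatrices}.
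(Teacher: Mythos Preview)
Your proof is correct and follows essentially the same approach as the paper's: the paper's one-line proof simply observes that any scale-$r$-chain at $(x,y)$ in $A\times B$ projects to scale-$r$-chains starting from $x$ and from $y$, which is exactly the core of your argument. You have merely supplied the routine details (the case $(x,y)\notin A\times B$, and the verification via coordinate-domination of distances that projections of scale-$r$-chains are scale-$r$-chains) that the paper leaves implicit.
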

\begin{proof}
Notice that any scale-$r$-chain at $(x,y)$ in $A\times B$ projects
to an scale-$r$-chain in $X$ starting from $x$ and to an scale-$r$-chain in $Y$
starting from $y$.
\end{proof}

\begin{Corollary}
Suppose $X$ and $Y$ are $\infty$-pseudo-metric spaces
and $r > 0$. If $X$ is of scale-$r$-dimension $0$
and $Y$ is of scale-$r$-dimension $0$,
then $X\times Y$ is of scale-$r$-dimension $0$
in either $l_1$-$\infty$-pseudo-metric or the supremum-$\infty$-pseudo-metric.
\end{Corollary}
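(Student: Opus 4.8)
The plan is to read the result off directly from the preceding Proposition together with the observation that the metric-norm of a subspace array is monotone with respect to the partial order $\leq$. First I would fix notation: let $\mathcal{A}$ denote the array of scale-$r$-components of $X$ and $\mathcal{B}$ the array of scale-$r$-components of $Y$. Since $X$ and $Y$ have scale-$r$-dimension $0$, both $\mathcal{A}$ and $\mathcal{B}$ have finite metric-norm; call these bounds $D_X$ and $D_Y$.

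Next I would invoke the Proposition, applied with $A=X$ and $B=Y$ so that $A\times B=X\times Y$: the array of scale-$r$-components of $X\times Y$ is $\leq \mathcal{A}\times\mathcal{B}$, in either the $l_1$-$\infty$-pseudo-metric or the supremum-$\infty$-pseudo-metric. Because $\diam$ is monotone under inclusion of subsets, the metric-norm of the smaller array is at most the metric-norm of $\mathcal{A}\times\mathcal{B}$, so it suffices to bound the latter.

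The only computation is the estimate of $\diam(\mathcal{A}(s)\times\mathcal{B}(t))$: in the $l_1$-$\infty$-pseudo-metric it is at most $\diam\mathcal{A}(s)+\diam\mathcal{B}(t)\leq D_X+D_Y$, and in the supremum-$\infty$-pseudo-metric it is at most $\max(\diam\mathcal{A}(s),\diam\mathcal{B}(t))\leq \max(D_X,D_Y)$; taking the supremum over all indices $(s,t)$ shows that $\mathcal{A}\times\mathcal{B}$, hence also the array of scale-$r$-components of $X\times Y$, has finite metric-norm. By definition this means $X\times Y$ has scale-$r$-dimension $0$.

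There is essentially no obstacle: the statement is a formal consequence of the Proposition. The only points needing (routine) care are that the metric-norm is order-preserving and that a product of two bounded sets is bounded in both product $\infty$-pseudo-metrics; one should also note that the convention $\diam\emptyset=0$ keeps these bounds valid when a coordinate of $\mathcal{A}$ or $\mathcal{B}$ is empty.
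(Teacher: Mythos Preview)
Your proof is correct and follows exactly the paper's approach: the paper's proof is the single sentence ``Notice the cartesian product of two arrays of finite metric-norm has finite metric-norm,'' and you have simply unpacked this together with the implicit use of the preceding Proposition and the monotonicity of metric-norm.
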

\begin{proof}
Notice the cartesian product of two arrays of finite metric-norm
has finite metric-norm.
\end{proof}

\begin{Lemma}\label{rDim0OfTimesProductOfArrays}
Suppose $X$ and $Y$ are $\infty$-pseudo-metric spaces and $r > 0$.
If
an array $\mathcal{A}$ in $X$ is scale-$r$-disjoint and of scale-$r$-dimension $0$ and an array $\mathcal{B}$ in $Y$ is of scale-$r$-dimension $0$, then
$\mathcal{A}\cdot_\times \mathcal{B}$ is of scale-$r$-dimension $0$.
\end{Lemma}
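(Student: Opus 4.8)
The plan is to bound directly the metric-norm of the array of scale-$r$-components of $W:=\mathcal{A}\cdot_\times \mathcal{B}=\bigcup_{s\in S}\mathcal{A}(s)\times \mathcal{B}(s)$, where $S$ is the common index set of $\mathcal{A}$ and $\mathcal{B}$. First I would record the decisive consequence of scale-$r$-disjointness of $\mathcal{A}$: the ``blocks'' $\mathcal{A}(s)\times \mathcal{B}(s)$ cannot be joined by scale-$r$-chains inside $W$. Indeed, if $x\in\mathcal{A}(s)$ and $x'\in\mathcal{A}(s')$ with $s\neq s'$, then $B(\mathcal{A}(s),r)\cap B(\mathcal{A}(s'),r)=\emptyset$, so $B(x,r)\cap B(x',r)=\emptyset$; and since in both the $l_1$- and the supremum-$\infty$-pseudo-metric on $X\times Y$ the $r$-ball about a point of $X\times Y$ projects into the $r$-ball about its $X$-coordinate, the same holds for any two points of $W$ whose $X$-coordinates lie in distinct $\mathcal{A}(s)$, $\mathcal{A}(s')$. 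In particular each point of $W$ lies in a unique block, and any scale-$r$-chain in $W$ starting in $\mathcal{A}(s)\times \mathcal{B}(s)$ stays there.

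Next I would fix $(x,y)\in W$, say $(x,y)\in\mathcal{A}(s)\times \mathcal{B}(s)$, and identify its scale-$r$-component in $W$ with its scale-$r$-component in the subspace $\mathcal{A}(s)\times \mathcal{B}(s)$ (one inclusion is automatic, the other is exactly the no-migration statement). By the product estimate for scale-$r$-components proved above, that component is contained in $C_x\times C_y$, where $C_x$ is the scale-$r$-component of $x$ in $\mathcal{A}(s)$ and $C_y$ that of $y$ in $\mathcal{B}(s)$. Since a scale-$r$-component of a subspace is contained in the corresponding scale-$r$-component of the ambient space, the hypothesis that $\mathcal{A}$ is of scale-$r$-dimension $0$ provides $N_A<\infty$ with $\diam C_x\le N_A$ for all such $C_x$, and the hypothesis that $\mathcal{B}$ is of scale-$r$-dimension $0$ provides $N_B<\infty$ with $\diam C_y\le N_B$ for all such $C_y$. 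Hence every scale-$r$-component of $W$ has diameter at most $N_A+N_B$ in the $l_1$-metric and at most $\max(N_A,N_B)$ in the supremum metric; in either case the array of scale-$r$-components of $W$ has finite metric-norm, so $W=\mathcal{A}\cdot_\times \mathcal{B}$ is of scale-$r$-dimension $0$.

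I expect the only genuine step to be the first one. The hypothesis that $\mathcal{A}$ is scale-$r$-disjoint — not merely that $\mathcal{A}$ and $\mathcal{B}$ are of scale-$r$-dimension $0$ — is precisely what prevents a scale-$r$-chain from migrating between blocks and drifting an unbounded distance in the $Y$-direction through $\mathcal{B}(s)$'s that are individually bounded but whose union need not be; a one-line example with all $\mathcal{A}(s)$ equal shows the lemma fails without it. Everything after the separation of blocks is the cited product estimate together with the routine observation that subspaces inherit the dimension bound, so that $N_A$ and $N_B$ are uniform in $s$.
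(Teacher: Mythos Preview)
Your proof is correct and follows the same approach as the paper's: use that projections of scale-$r$-chains are scale-$r$-chains, so scale-$r$-disjointness of $\mathcal{A}$ confines any scale-$r$-component of $\mathcal{A}\cdot_\times\mathcal{B}$ to a single block $\mathcal{A}(s)\times\mathcal{B}(s)$, and then bound its diameter by the product estimate for scale-$r$-components. The paper's proof is simply a terser version of yours.
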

\begin{proof}
The projections of any scale-$r$-connected subset $C$ of $\mathcal{A}\cdot_\times \mathcal{B}$ are also scale-$r$-connected, so in case of projecting onto $X$, it must be contained in $\mathcal{A}(s)$ for some index $s$ from the index set.
Therefore $C\subset \mathcal{A}(s)\times \mathcal{B}(s)$
and its diameter is bounded by the sum of diameters of
scale-$r$-components arrays of $\mathcal{A}$ and $\mathcal{B}$.
\end{proof}

\begin{Lemma}\label{rDimZeroOfUnionOfSets}
Suppose $X$ is an $\infty$-pseudo-metric space.
If the metric-norm of the scale-$r$-components array
of $A\subset X$ is at most $M$ and the metric-norm of the scale-$(M+2r)$-components array
of $B\subset X$ is at most $s$, then the metric-norm of the scale-$r$-components array
of $A\cup B$ is at most $M+s+2r$.
\end{Lemma}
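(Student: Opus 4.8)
The plan is to understand a scale-$r$-component of $A\cup B$ by tracking how a scale-$r$-chain in $A\cup B$ alternates between $A$ and $B$. The key observation is the standard "coarsening" trick: whenever a scale-$r$-chain passes through a maximal run of consecutive points lying in $B$, the endpoints of that run lie in a single scale-$(M+2r)$-component of $B$ — no, wait, that is not quite the right bookkeeping. Let me re-order. First I would fix $x\in A\cup B$ and a point $y$ joined to $x$ by a scale-$r$-chain $x_0=y,\dots,x_n=x$ in $A\cup B$. I would then replace this chain by the finite sequence of its $A$-visits: the set $F$ of indices $i$ with $x_i\in A$. For two consecutive elements $i<j$ of $F$ (so $x_{i+1},\dots,x_{j-1}\in B$), the whole block $x_i,x_{i+1},\dots,x_j$ is a scale-$r$-chain, hence in particular a scale-$r$-chain in $X$ connecting $x_i$ to $x_j$; since $x_{i+1},\dots,x_{j-1}\in B$ this exhibits a path along which $x_{i+1}$ and $x_{j-1}$ lie in one scale-$r$-component of $B$, so $d(x_{i+1},x_{j-1})\le s$ (using $s\le M+s+2r$, but we actually want the cruder bound from the $(M+2r)$-component). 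I would then estimate $d(x_i,x_j)\le d(x_i,x_{i+1})+d(x_{i+1},x_{j-1})+d(x_{j-1},x_j)\le 2r+s$ when the block is nontrivial, and $d(x_i,x_j)\le 2r$ when it is not.

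The point of replacing the $r$-chain in $A\cup B$ by the subsequence $(x_i)_{i\in F}$ is that consecutive terms are now at distance at most $2r+s$, i.e. $(x_i)_{i\in F}$ is a scale-$(\tfrac{s}{2}+r)$-chain in $A$ — more conveniently, after passing to $r$-balls, any two consecutive $A$-visits are connected by a scale-$(M+2r)$-chain? Here I need the hypothesis stated in terms of $M+2r$, so I should double the slack: the block $x_i,\dots,x_j$ with interior in $B$ shows $x_{i+1}$ and $x_{j-1}$ lie in a common scale-$r$-component of $B$, which (since $r\le M+2r$) is contained in a scale-$(M+2r)$-component of $B$; but the hypothesis on $B$ bounds scale-$(M+2r)$-components, not scale-$r$-components, so in fact the right reading is: all the $B$-blocks occurring along our chain get absorbed, and what remains is a chain in $A$ whose steps are controlled. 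Concretely I would argue that $y$ and $x$ (when both are in $A$) lie in a single scale-$(M+2r)$-component of $A$: each $A$-step of the original chain has length $<2r\le M+2r$, and each $B$-block is swallowed by one scale-$(M+2r)$-component of $B$ whose diameter is $\le s\le M+2r$ only if $s\le M$, which we do not know — so instead of claiming a chain in $A$ alone I keep the chain in $A\cup B$ but observe its $A$-vertices form a scale-$(s+2r)$-chain, while simultaneously its $B$-blocks each have diameter $\le M$. Hence the component of $x$ in $A\cup B$ is covered by: one scale-$(s+2r)$-component's worth of $A$-vertices (diameter $\le M$, since between consecutive $A$-vertices we only move by $2r\le M+2r$... ) — I will tidy this into the bound $d(x_0,x_n)\le M + s + 2r$ by splitting $d(x_0,x_n)\le d(x_0, x_{i_1}) + d(x_{i_1}, x_{i_k}) + d(x_{i_k}, x_n)$ where $i_1,\dots,i_k$ enumerate $F$: the first and last terms are each $\le r$ (a $B$-block at the ends, diameter $\le M$, actually $\le M$, but adjacent to $x_0$ so $\le M$... use $2r$ if $x_0\in A$), the middle term is the diameter of a scale-$(M+2r)$-chain's... no: $d(x_{i_1},x_{i_k})$ is bounded because consecutive $A$-vertices are within $s+2r$, but that only gives a chain, not a bound. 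The clean route is: the $A$-vertices, being connected by steps of size $\le s+2r$ through points of $A$? They are not through points of $A$.

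Let me state the route I will actually write. Set $A' = $ the scale-$r$-saturation issue aside and argue directly on diameters. Given the chain, let $i$ be the first index with $x_i\in A$ and $j$ the last (if no $x_i\in A$, the whole chain lies in $B$ and we are done with bound $M+2r\le M+s+2r$ since a scale-$r$-chain in $B$ has length $\le M$). The segment $x_0,\dots,x_i$ lies in $B$ except possibly $x_i$, so $d(x_0,x_i)\le M + 2r$ — actually $d(x_0,x_{i-1})\le M$ and $d(x_{i-1},x_i)< 2r$. Symmetrically $d(x_j,x_n)\le M+2r$. It remains to bound $d(x_i,x_j)$. Between $i$ and $j$, every maximal $B$-block has, by the block argument above, its two $B$-endpoints within a common scale-$r$-component of $B$ and hence within $M$ of each other; replacing each such block by a single step shows $x_i$ and $x_j$ are joined by a scale-$(M+2r)$-chain all of whose vertices lie in $A$. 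Therefore $x_i$ and $x_j$ lie in one scale-$(M+2r)$-component of $A$ — but the hypothesis bounds scale-$r$-components of $A$, not scale-$(M+2r)$-components. So this still does not close. I conclude the intended statement must instead put $A$ at scale $r$ and $B$ at scale $M+2r$ precisely because the $A$-chain we extract is only a scale-$r$-chain once we also know the $B$-blocks have small diameter: indeed each step within a $B$-block is $<2r$, but to jump over the whole block in one $r$-step we would need diameter $<2r$, false.

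Given this tension, the honest plan — and the step I expect to be the genuine obstacle — is the bookkeeping of \emph{which} scale goes with \emph{which} set. I will therefore prove it as follows, which I believe is what the author intends: take a scale-$r$-chain for $A\cup B$; its $B$-sub-blocks each have diameter $\le M$ by hypothesis on $A$ (scale-$r$-components of $A$ have norm $\le M$ means blocks \emph{in $A$} have diameter $\le M$, symmetrically blocks in $B$ at scale $M+2r$ have diameter $\le s$). So: blocks in $A$ have diameter $\le M$, blocks in $B$ have diameter $\le s$ only at scale $M+2r$. Collapse the $A$-blocks: consecutive $B$-blocks are then joined by a jump of length $\le M + 2r$ (cross one $A$-block of diameter $\le M$, plus two $r$-steps), i.e. the $B$-vertices form a scale-$(M+2r)$-chain in $B$, hence lie in a single scale-$(M+2r)$-component of $B$, of diameter $\le s$. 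Thus all $B$-vertices are within $s$ of each other; adding the at-most-$M$ contribution of the $A$-blocks at the two ends and the $2r$ of the connecting $r$-steps gives $d(x_0,x_n)\le s + M + 2r$. The main obstacle, as the above dithering shows, is precisely to see that collapsing $A$-blocks upgrades the $B$-chain from scale $r$ to scale $M+2r$ (so that the hypothesis on $B$ applies), and I will write that step carefully; everything else is the triangle inequality.
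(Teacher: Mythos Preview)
After the extended self-correction, the route you settle on in your last paragraph is exactly the paper's: collapse each maximal $A$-block (diameter $\le M$) so that consecutive $B$-visits form a scale-$(M+2r)$-chain in $B$, whence all $B$-points of the chain lie in a single scale-$(M+2r)$-component of $B$ (diameter $\le s$), and then add the terminal $A$-block and the connecting step to obtain the bound. One arithmetic slip to fix when you write it up: a scale-$r$ step has length $<2r$, so the jump across an $A$-block between consecutive $B$-points is $<M+4r$, not $\le M+2r$ --- but since $M+4r<2(M+2r)$ the scale-$(M+2r)$-chain conclusion is unaffected.
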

\begin{proof}
Notice that any scale-$r$-chain contained in either $A$ or $B$ has diameter at most $\max(M,s)$, so of interest are only chains
that meander between $A$ and $B$ and we may restrict ourselves
to scale-$r$-chains starting in $B$. Let $C(b)$ be the scale-$r$-component of $b\in B$. Any scale-$r$-chain starting at $b$ may hop only to
an scale-$r$-component of $A$ that is within $2r$ from $C(t)$.
Therefore the whole scale-$r$-chain may never enter another scale-$r$-component of $B$ and is of diameter at most $M+s+2r$.
\end{proof}

\section{A characterization of asymptotic dimension}

Suppose $(X,d)$ is an $\infty$-pseudo-metric space and $ r > 0$ is a real
number. We can define a new integer-valued pseudo-metric $d_r$ on $X$
by declaring $d_r(x,y)$ to be the length of the shortest scale-$r$-chain joining $x$ and $y$ if $x\ne y$. If such a chain does not exist, we put $d_r(x,y)=\infty$. Notice the identity map from $(X,d_r)$ to $(X,d)$ is bornologous. Also, if $s\ge r > 0$, then the identity map from $(X,d_r)$ to $(X,d_s)$ is $1$-Lipschitz.

The following is an analog of the parallel-perpendicular decomposition in linear algebra:
\begin{Lemma}\label{MainDecompositionLemma}
Suppose $X$ is an $\infty$-pseudo-metric space, $s\ge r > 0$ are real numbers, and $m\ge 1$ is an integer. For every subset $Y$ of $X$ that is of scale-$8(m+1)s$-dimension $0$ there is a subset array
$\mathcal{Y}^\bot=\{Y_i\}_{0\leq i\leq m}$ of $X$ of scale-$s$-dimension $0$ satisfying the following properties:\\
1. $B(\mathcal{Y}^\bot,s)^T\cdot_\cap B(\mathcal{Y}^\bot,s)$ is a diagonal matrix,\\
2. $\mathcal{Y}^\bot$ is $s$-orthogonal to the constant array $\array(Y)$ with entries $Y$,\\
3. For each array $\mathcal{Z}=\{Z_i\}_{0\leq i\leq m}$ of $X$ of scale-$r$-dimension $0$,
the array $\mathcal{Z}\cup \array(Y)$ with entries $ Y\cup Z_i$ can be expressed as the union of
 $\mathcal{Z}\cap\mathcal{Y}^\bot$ and of an array $\mathcal{A}$ of scale-$r$-dimension $0$ that is bigger than or equal to
$\array(Y)$.
\end{Lemma}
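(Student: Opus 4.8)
The plan is to realize $\mathcal{Y}^\bot$ as a family of $m+1$ concentric ``barrier shells'' around $Y$: given an array $\mathcal{Z}$ of scale-$r$-dimension $0$, its ``perpendicular part'' will be the portion of $\mathcal{Z}$ lying in these shells, while the ``parallel part'' $\mathcal{A}$ absorbs $Y$ together with whatever of $\mathcal{Z}$ lies off the shells. As a first step I would fix the decomposition of $Y$ into its scale-$8(m+1)s$-components $\{C_\alpha\}$; by hypothesis this family is uniformly bounded, say by $M$, and since distinct scale-$8(m+1)s$-components cannot be joined by even a one-step scale-$8(m+1)s$-chain in $Y$, one gets $\dist(C_\alpha,C_\beta)\ge 8(m+1)s$ whenever $\alpha\ne\beta$. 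The structural consequence I would rely on throughout is that every $x$ with $d(x,Y)<4(m+1)s$ has a unique nearest component $C_{\nu(x)}$ (with $d(x,C_\gamma)>4(m+1)s$ for all other $\gamma$), so that on the ``collar'' $B(Y,4(m+1)s)$ the function $d(\cdot,Y)$ coincides with $d(\cdot,C_{\nu(\cdot)})$ and any subset of the collar splits into bounded, widely separated pieces indexed by the $C_\alpha$.

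Next I would choose real numbers $w\ge 2r$ and $2s\le a_0<a_1<\dots<a_m$ with $a_{i+1}-a_i\ge w+2s$ and $a_m+w<4(m+1)s$ (an arithmetic progression will do), and set $Y_i=\{x: a_i\le d(x,Y)<a_i+w\}$, so $\mathcal{Y}^\bot=\{Y_i\}_{0\le i\le m}$. Property 2 is then routine: $Y_i$ lies at distance $\ge 2s$ from $Y$, so $B(Y_i,s)\cap B(Y,s)=\emptyset$. Property 1 is the same bookkeeping: $B(Y_i,s)$ sits inside the radial band $\{a_i-s<d(\cdot,Y)<a_i+w+s\}$, and the choice $a_{i+1}-a_i\ge w+2s$ makes these bands pairwise disjoint, so all off-diagonal entries of $B(\mathcal{Y}^\bot,s)^{T}\cdot_\cap B(\mathcal{Y}^\bot,s)$ vanish. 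For scale-$s$-dimension $0$ of each $Y_i$: since $a_i+w<4(m+1)s$, $Y_i$ lies in the collar, so $Y_i=\bigsqcup_\alpha Y_i^\alpha$ with $Y_i^\alpha\subset B(C_\alpha,a_i+w)$ of diameter $\le M+2(a_i+w)$; and since a scale-$s$-chain moves $d(\cdot,C_\alpha)$ by less than $2s$ per step, the radius bounds of $Y_i$ — calibrated against $\dist(C_\alpha,C_\beta)\ge 8(m+1)s$ — prevent such a chain inside $Y_i$ from migrating from $Y_i^\alpha$ to $Y_i^\beta$, so the scale-$s$-components of $Y_i$ are uniformly bounded.

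For property 3, given $\mathcal{Z}=\{Z_i\}$ of scale-$r$-dimension $0$ I would put $A_i=Y\cup(Z_i\setminus Y_i)$; then $\mathcal{A}=\{A_i\}\ge\array(Y)$ and entrywise $(Z_i\cap Y_i)\cup A_i=Y\cup Z_i$, i.e.\ $\mathcal{Z}\cup\array(Y)=(\mathcal{Z}\cap\mathcal{Y}^\bot)\cup\mathcal{A}$. The remaining content is that $A_i$ has scale-$r$-dimension $0$, and this is where the barrier property enters: since $r\le s$, a scale-$r$-chain moves $d(\cdot,C_\alpha)$ by less than $2r\le w$ per step, so a scale-$r$-chain in $A_i$ starting inside $C_\alpha$ and reaching a point with $d(\cdot,C_\alpha)\ge a_i+w$ must first pass through a point $x$ with $d(x,C_\alpha)\in[a_i,a_i+w)\subset[0,4(m+1)s)$; for such $x$ the nearest component is $C_\alpha$, whence $d(x,Y)=d(x,C_\alpha)\in[a_i,a_i+w)$ and $x\in Y_i$, contradicting $A_i\cap Y_i=\emptyset$. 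Thus any scale-$r$-component of $A_i$ meeting $Y$ meets exactly one $C_\alpha$ and is contained in $B(C_\alpha,a_i+w)$, hence has diameter $\le M+2(a_i+w)$; and any scale-$r$-component of $A_i$ missing $Y$ lies inside $Z_i$, hence inside a scale-$r$-component of $Z_i$, so is bounded by a constant depending only on $\mathcal{Z}$. Therefore $\mathcal{A}$ is of scale-$r$-dimension $0$. (One could instead deduce this from Lemma \ref{rDimZeroOfUnionOfSets}, but the direct ``trapping'' argument is cleaner.)

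The step I expect to be the genuine obstacle is the joint calibration in the second paragraph: the requirements ``scale-$s$-dimension $0$'' and ``barrier'' push the shells to be thin and to stay well inside the collar $B(Y,4(m+1)s)$, while ``pairwise $s$-orthogonality'' and ``distance $\ge 2s$ from $Y$'' push them apart and away from $Y$, and reconciling all four for all $m+1$ shells simultaneously is exactly what forces the hypothesis that $Y$ be of scale-$8(m+1)s$-dimension $0$ — the outermost shell being the tight case. It is probably cleanest to run the bookkeeping through the integer pseudo-metrics $d_r$ and $d_s$ introduced just before the lemma, using that $d_s(\cdot,Y)$ changes by at most $1$ along a scale-$r$-chain; this simultaneously delivers the barrier property and streamlines the ``unique nearest component'' estimates.
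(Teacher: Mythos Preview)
Your proposal is correct and follows essentially the same strategy as the paper: build $\mathcal{Y}^\bot$ as $m+1$ concentric ``barrier shells'' around $Y$, and for property 3 take $A_i=Y\cup(Z_i\setminus Y_i)$ and use the shells to trap any scale-$r$-chain that starts in $Y$. The only real difference is that the paper defines the shells using the integer chain pseudo-metric $d_s$ (taking $Y_i$ to be the set where $d_s(\cdot,Y)$ equals a fixed integer, with consecutive values spaced by $3$), whereas you slice by the original distance $d(\cdot,Y)$; you yourself note at the end that the $d_s$ bookkeeping is cleaner, and that is exactly what the paper does. Two small remarks on your constants: first, distinct scale-$8(m+1)s$-components of $Y$ are actually at distance $\ge 16(m+1)s$ (the chain condition $B(x_i,r)\cap B(x_{i+1},r)\ne\emptyset$ gives step size $<2r$), so you have twice the room you claimed and the collar can be taken of radius $8(m+1)s$; second, with your stated collar radius $4(m+1)s$ and $r=s$ the inequalities $a_0\ge 2s$, $a_{i+1}-a_i\ge w+2s$, $w\ge 2r$, $a_m+w<4(m+1)s$ collide at equality, but the extra factor of $2$ just mentioned resolves this with room to spare.
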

\begin{proof}
For each subset $A$ of $X$ and each natural $k$ let the $k$th outer $s$-ring be the set of points that can be reached from $A$ via a scale-$s$-chain of length at most $k+1$ but cannot be reached from $A$ by a scale-$s$-chain of length less than or equal to $k$. In other words, it is the set $B(A,k+2)\setminus B(A,k+1)$ with balls measured in the pseudo-metric $d_s$.

Consider the array $\mathcal{Y}^\bot$ consisting of outer $s$-rings of $Y$ for any $k$ of the form $3\cdot i+3$, $0\leq i\leq m$. The scale-$s$-components of such rings are contained in the corresponding rings of scale-$s$-components of $X_0$, hence 
$\mathcal{Y}^\bot$ is of scale-$s$-dimension $0$.

Look at the scale-$r$-components of $(Y\cup Z_i)\setminus Y_i$ for a fixed $i$. To show they form a uniformly bounded family it suffices to consider only those scale-$r$-components that intersect both $Y$ and $Z_i$.
Also, if a scale-$r$-component $D$ of $(Y\cup Z_i)\setminus Y_i$
intersects only one scale-$s$-component $E$ of $Y$,
then $D$ has its diameter bounded by the sum of $r$, $\diam(E)$, and the metric-norm of scale-$r$-components array of $Z_i$.
It remains to show that there is no scale-$r$-chain
in $(Y\cup Z_i)\setminus Y_i$ joining points belonging to
different scale-$s$-components of $Y$. Yes, it is so as any such chain
would miss $Y_i$, a contradiction.
\end{proof}

\begin{Theorem}\label{MainCharOfAsdim}
Suppose $X$ is an $\infty$-pseudo-metric space and $n\ge 0$ is an integer. The \textbf{asymptotic dimension} $\asdim(X)$ of $X$ is at most $n$
if and only if for any real number $r > 0$ and any integer $m\ge 1$ there is
an augmented $m\times (n+1)$-matrix $\mathcal{M}=[\mathcal{B} |\mathcal{A}]$ (that means $\mathcal{B}$ is a column-matrix and $\mathcal{A}$
is an $m\times n$-matrix)  of subspaces of $X$ of scale-$r$-dimension $0$
such that $\mathcal{M}\cdot_\cap \mathcal{M}^T$
is bigger than or equal to the identity matrix and
$B(\mathcal{A},r)\cdot_\cap B(\mathcal{A},r)^T$
is a diagonal matrix.
\end{Theorem}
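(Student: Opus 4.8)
The plan is to rephrase asymptotic dimension in the language of the paper and then prove the two implications, the nontrivial one by iterating Lemma~\ref{MainDecompositionLemma}.

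First I would record the routine equivalence: $\asdim(X)\le n$ holds if and only if for every $r>0$ the space $X$ is a union of $n+1$ subspaces of scale-$r$-dimension $0$. (For the forward implication, an Ostrand decomposition at scale $2r$ has pieces of scale-$r$-dimension $0$, since a scale-$r$-chain cannot leave a member of a $2r$-disjoint family; for the converse, the distinct scale-$r$-components of a set of scale-$r$-dimension $0$ form a uniformly bounded, $r$-disjoint family.) Granting this, the \textbf{if} direction of the theorem is immediate: applying the hypothesis with $m=1$ yields a single row $[\,\mathcal B(1)\mid \mathcal A(1,1),\dots,\mathcal A(1,n)\,]$ whose union is $X$ and whose $n+1$ entries are of scale-$r$-dimension $0$, so $X$ is a union of $n+1$ subspaces of scale-$r$-dimension $0$ for each $r$. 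The \textbf{only if} direction for $n=0$ is also immediate (take every $\mathcal B(i)$ equal to $X$, which is of scale-$r$-dimension $0$), so assume $n\ge 1$ from now on.

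For the \textbf{only if} direction with $n\ge 1$, fix $r>0$ and $m\ge 1$ and, using the equivalence above, choose a decomposition $X=X_0\cup X_1\cup\dots\cup X_n$ with each $X_j$ of scale-$8(m+1)r$-dimension $0$ (an Ostrand decomposition at scale $16(m+1)r$ suffices). I would then apply Lemma~\ref{MainDecompositionLemma} $n$ times with $s=r$, peeling off one scale-$r$-disjoint array each time. At step $1$ take $Y=X_n$ and $\mathcal Z=\array(X_0)$; the lemma returns a pairwise scale-$r$-disjoint array $\{Y^{(1)}_i\}_{0\le i\le m}$ and an array $\mathcal A^{(1)}$ of scale-$r$-dimension $0$ with $\mathcal A^{(1)}(i)\supseteq X_n$ and $X_n\cup X_0=(X_0\cap Y^{(1)}_i)\cup\mathcal A^{(1)}(i)$; set $\mathcal G^{(1)}(i):=X_0\cap Y^{(1)}_i$. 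At step $t$, for $2\le t\le n$, take $Y=X_{\,n+1-t}$ and $\mathcal Z=\{\mathcal A^{(t-1)}(i)\}_i$ --- legitimate precisely because $\mathcal A^{(t-1)}$ is of scale-$r$-dimension $0$ --- getting a pairwise scale-$r$-disjoint array $\{Y^{(t)}_i\}_i$ and an array $\mathcal A^{(t)}$ of scale-$r$-dimension $0$ with $\mathcal A^{(t)}(i)\supseteq X_{\,n+1-t}$ and $X_{\,n+1-t}\cup\mathcal A^{(t-1)}(i)=\mathcal G^{(t)}(i)\cup\mathcal A^{(t)}(i)$, where $\mathcal G^{(t)}(i):=\mathcal A^{(t-1)}(i)\cap Y^{(t)}_i$. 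The invariant I would carry along is that after step $t$, for every $i$, $$X=\mathcal G^{(1)}(i)\cup\dots\cup\mathcal G^{(t)}(i)\cup\mathcal A^{(t)}(i)\cup X_1\cup\dots\cup X_{\,n-t};$$ this propagates from $t-1$ to $t$ simply by substituting the displayed equality for $\mathcal A^{(t-1)}(i)\cup X_{\,n+1-t}$. At $t=n$ the trailing block $X_1\cup\dots\cup X_0$ is empty, so $X=\mathcal G^{(1)}(i)\cup\dots\cup\mathcal G^{(n)}(i)\cup\mathcal A^{(n)}(i)$ for every $i$.

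To finish I would let $\mathcal M$ be the matrix whose $i$-th row is $[\,\mathcal A^{(n)}(i)\mid \mathcal G^{(1)}(i),\dots,\mathcal G^{(n)}(i)\,]$, i.e.\ $\mathcal B(i)=\mathcal A^{(n)}(i)$ and the $j$-th column of $\mathcal A$ equal to $\mathcal G^{(j)}$. Every entry is of scale-$r$-dimension $0$, being a subset of a set already known to have this property (scale-$r$-dimension $0$ passes to subsets). Each row covers $X$ by the invariant; since the $(i,i)$ entry of $\mathcal M\cdot_\cap\mathcal M^T$ is the union of the $i$-th row of $\mathcal M$, this says exactly that $\mathcal M\cdot_\cap\mathcal M^T\ge\mathcal I_X$. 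Since $\mathcal G^{(j)}(i)\subseteq Y^{(j)}_i$ and the $\{Y^{(j)}_i\}_i$ are pairwise scale-$r$-disjoint, each column $\mathcal G^{(j)}$ of $\mathcal A$ is scale-$r$-disjoint, hence $B(\mathcal A,r)\cdot_\cap B(\mathcal A,r)^T$ is diagonal by Lemma~\ref{OrthogonalMatrices}. The arrays produced are indexed by $\{0,\dots,m\}$, so this is an $(m+1)\times(n+1)$ matrix; deleting any one row keeps all three properties and gives the required $m\times(n+1)$ matrix. The one delicate point is the allocation of roles in the iteration: the auxiliary arrays $\mathcal A^{(t)}$ are only of scale-$r$-dimension $0$ and so cannot reappear as the strongly disconnected input $Y$ of the lemma; every $Y$ must be one of the original pieces $X_j$, and exactly one of them (here $X_0$) must be spent as the first $\mathcal Z$ in order for the covering invariant to close up after precisely $n$ steps. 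The remaining verifications --- inheritance of scale-$r$-dimension $0$ and of scale-$r$-disjointness under subsets, and the index shift between $m$ and $m+1$ --- are routine given the machinery already developed.
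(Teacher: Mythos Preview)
Your proof is correct and follows essentially the same strategy as the paper: both derive the ``if'' direction from $m=1$, and both prove the ``only if'' direction by starting from an Ostrand decomposition $X=X_0\cup\cdots\cup X_n$ at the inflated scale $8(m+1)r$ and iterating Lemma~\ref{MainDecompositionLemma} $n$ times with $s=r$, at each step intersecting the current $\mathcal{B}$-column with the lemma's $\mathcal{Y}^\bot$ to peel off one scale-$r$-disjoint column of $\mathcal{A}$ and then updating $\mathcal{B}$ to the remaining scale-$r$-dimension-$0$ array. The only cosmetic difference is the order in which you feed in the $X_j$'s (you absorb $X_n,X_{n-1},\ldots,X_1$; the paper absorbs $X_1,X_2,\ldots,X_n$), and you are more explicit than the paper about the opening equivalence, the $n=0$ case, the running covering invariant, and the $(m+1)$-versus-$m$ index adjustment.
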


\begin{Remark}
Typically, in matrix algebra, augmented matrices have the last column as the distinguished one. In our case, we find it more convenient to distinguish the first column in augmented matrices.
\end{Remark}

\begin{Observation}
The meaning of \ref{MainCharOfAsdim}
is as follows: each row of matrix $\mathcal{M}$ represents
a covering of $X$, each column, with the exception of the first one, has entries that are mutually $r$-disjoint, and all entries of $\mathcal{M}$ are of scale-$r$-dimension $0$.
\end{Observation}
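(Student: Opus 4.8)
The Observation merely unpacks the three matrix conditions of Theorem \ref{MainCharOfAsdim} into their plain combinatorial meaning, so the proof is a direct translation using the algebraic identities established earlier in the excerpt. I would treat each of the three conditions in turn and show it is equivalent to the stated verbal description. The only conditions needing genuine argument are the first two; the third (all entries of $\mathcal{M}$ have scale-$r$-dimension $0$) is already literally part of the hypothesis of \ref{MainCharOfAsdim} and requires no unpacking at all.

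\textbf{First condition: rows cover.} The plan is to show that $\mathcal{M}\cdot_\cap \mathcal{M}^T\ge \mathcal{I}_X$ is equivalent to saying each row of $\mathcal{M}$ represents a cover of $X$. The diagonal $(i,i)$-entry of $\mathcal{M}\cdot_\cap \mathcal{M}^T$ is the $\cap$-dot product of the $i$-th row of $\mathcal{M}$ with the $i$-th column of $\mathcal{M}^T$, which is the $i$-th row of $\mathcal{M}$ itself; hence that entry equals the set-theoretic norm of the $i$-th row. So the diagonal entry equals $X$ exactly when the $i$-th row is a cover of $X$, which is precisely the criterion recorded in the text just after Definition of the $\cap$-dot product (``$\mathcal{A}$ represents a cover of $X$ if and only if its set-theoretic norm is $X$''). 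Saying $\mathcal{M}\cdot_\cap \mathcal{M}^T\ge \mathcal{I}_X$ asserts exactly that every diagonal entry contains $X$, equivalently equals $X$; the off-diagonal entries of $\mathcal{I}_X$ are empty, so they impose no constraint. Thus this single matrix inequality says precisely that each row is a cover.

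\textbf{Second condition: non-first columns are $r$-disjoint.} Here I would invoke Lemma \ref{OrthogonalMatrices}: for the sub-matrix $\mathcal{A}$, the statement that $B(\mathcal{A},r)\cdot_\cap B(\mathcal{A},r)^T$ is diagonal is condition (3) of that lemma, which is equivalent to condition (1), namely that each column of $\mathcal{A}$ is scale-$r$-disjoint. Since $\mathcal{A}$ consists of all columns of $\mathcal{M}$ except the first, this is exactly the assertion that every column of $\mathcal{M}$ other than the first has mutually scale-$r$-disjoint (equivalently, by the paper's convention, $r$-disjoint) entries. The first column $\mathcal{B}$ is excluded from this condition, matching the wording ``with the exception of the first one.''

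\textbf{Main obstacle.} There is essentially no obstacle: the whole content is bookkeeping that matches each symbolic condition to its verbal paraphrase. The one point to state carefully is the identification of the diagonal entry of $\mathcal{M}\cdot_\cap \mathcal{M}^T$ with the set-theoretic norm of a row, since this relies on reading off which row/column enters the matrix product — a mechanical but easily-miswritten step. Because \ref{OrthogonalMatrices} and the cover criterion are already available, I would keep the proof to a few sentences rather than re-deriving those equivalences.
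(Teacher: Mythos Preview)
Your proposal is correct; the paper gives no separate proof of this Observation (it is stated as an explanatory remark and the text proceeds directly to the proof of Theorem~\ref{MainCharOfAsdim}), and your unpacking of the three conditions via the set-theoretic norm criterion and Lemma~\ref{OrthogonalMatrices} is exactly the intended reading.
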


\begin{proof} (of Theorem \ref{MainCharOfAsdim})
Applying $m=1$ one gets one direction of \ref{MainCharOfAsdim}.

To show the other direction, choose a disjoint array $\{X_i\}_{i=0}^n$
of scale-$8(m+1)r$-dimension $0$ covering $X$ and proceed by induction using \ref{MainDecompositionLemma} for $s=r$.
In the first step we construct the first column of $\mathcal{A}$
and a temporary column $\mathcal{B}$
using $X_1$ and $X_0$ as in \ref{MainDecompositionLemma}.
Thus the first column of $\mathcal{A}$ is $r$-orthogonal to
$\array(X_1)$ and $\mathcal{B}$ is its complement
in $\array(X_0\cup X_1)$.
At each step $i$, $n > i \ge 2$, we choose $\mathcal{X}_{i+1}^\bot$
in $\bigcup\limits_{j=0}^{i+1} X_j$ as in \ref{MainDecompositionLemma}
and set the $i$-th column of $\mathcal{A}$ to be
the array whose $j$-th coordinate is the intersection
of $ \mathcal{X}_{i+1}^\bot(j)$ and $\mathcal{B}(j)$.
Then we adjust $\mathcal{B}$ so that the new $j$-th coordinate
is $(X_{i+1}\cup \mathcal{B}(j))\setminus \mathcal{A}(j,i)$. That way $\mathcal{B}$ always remains to be scale-$r$-dimension $0$.
In step $(n-1)$ we get the desired matrices.
\end{proof}

\section{APD profiles}

\begin{Definition}
Suppose $X$ is an $\infty$-pseudo-metric space.
A finite array of functions $(\alpha_0,\ldots,\alpha_k)$ from
$[0,\infty)$ to $[0,\infty)$ is an \textbf{APD profile} of $X$ if
the following conditions are satisfied:\\
1.
$\alpha_0$ is constant,\\
2. each function $ \alpha_i$, $0\leq i\leq k$, is non-decreasing,\\
3. for any non-decreasing array $(r_0,\ldots,r_{k})$ of positive real numbers
there is a decomposition of $X$ as the union of
its subsets $X_0,\ldots, X_k$ such that each $X_i$, $0\leq i\leq k$,
has scale-$r_{i}$-dimension at most $\alpha_i(r_{i-1})-1$.
\end{Definition}

\begin{Remark}
Notice that $r_{-1}$ in the above definition is undefined but it does not matter as $\alpha_0$ is a constant function. Also,
saying that a space $X$ is of dimension at most $d$, in case $d$ is not a natural number, means that the dimension of $X$ is at most the largest non-negative integer $n$ so that $n < d$.
\end{Remark}

\begin{Example}
Suppose $X$ is an $\infty$-pseudo-metric space.
Its asymptotic dimension is at most $n$ if and only if
 $(1,n)$ is an APD profile of $X$.
\end{Example}

\begin{Example}
Suppose $X$ is an $\infty$-pseudo-metric space.
Its asymptotic dimension is finite if and only if
it has an APD profile consisting of constant functions.
\end{Example}

\begin{Observation}
The most important APD profiles are of the form $(\alpha_0,\ldots,\alpha_k)$,
where $\alpha_0\equiv 1$. Indeed, if $(\alpha_0,\ldots,\alpha_k)$
is an APD profile of $X$, then so is 
$(1,\alpha_0-1,\ldots,\alpha_k)$.
\end{Observation}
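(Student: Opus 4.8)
The plan is to manufacture a decomposition witnessing the new profile directly from one witnessing the old profile, the only genuinely new ingredients being that a single scale-dimension-$0$ piece gets peeled off the top block and that scale-$r$-dimension behaves monotonically in $r$. Write the candidate new profile as $(\beta_0,\ldots,\beta_{k+1})$ with $\beta_0\equiv 1$, $\beta_1\equiv\alpha_0-1$, and $\beta_i=\alpha_{i-1}$ for $2\le i\le k+1$. Conditions 1 and 2 of the definition of APD profile are then immediate: $\beta_0$ and $\beta_1$ are constant (the latter because $\alpha_0$ is constant), and $\beta_2,\ldots,\beta_{k+1}$ inherit monotonicity from $\alpha_1,\ldots,\alpha_k$. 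So the entire content lies in verifying condition 3.

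Fix a non-decreasing array $(s_0,\ldots,s_{k+1})$ of positive reals. First I would feed the hypothesis the shifted array $(r_0,\ldots,r_k):=(s_1,\ldots,s_{k+1})$, which is again non-decreasing and positive, obtaining a decomposition $X=X_0\cup\cdots\cup X_k$ in which $X_i$ has scale-$s_{i+1}$-dimension at most $\alpha_i(s_i)-1$ for $i\ge 1$, while $X_0$ has scale-$s_1$-dimension at most $\alpha_0-1$ (the argument of $\alpha_0$ being irrelevant, as $\alpha_0$ is constant). For the indices $2\le i\le k+1$ I would simply set $X_i':=X_{i-1}$: this piece has scale-$s_i$-dimension at most $\alpha_{i-1}(s_{i-1})-1=\beta_i(s_{i-1})-1$, which is precisely what the new profile requires. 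Thus the relabeling disposes of every index except $0$ and $1$ with no computation, because the scale $r_{i-1}=s_i$ used for $X_{i-1}$ and the dimension-function argument $r_{i-2}=s_{i-1}$ coincide with the scale and argument demanded of $X_i'$.

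It remains to carve $X_0$ into the first two pieces. Since $X_0$ has scale-$s_1$-dimension at most $\alpha_0-1$, it is a finite union $W_0\cup\cdots\cup W_d$ of subspaces of scale-$s_1$-dimension $0$ with $d\le\alpha_0-1$. I would set $X_0':=W_0$ and $X_1':=W_1\cup\cdots\cup W_d$, so that $X_1'$ has scale-$s_1$-dimension at most $d-1\le\alpha_0-2=\beta_1(s_0)-1$. For $X_0'$ the new profile asks for scale-$s_0$-dimension $0$, whereas so far I only know scale-$s_1$-dimension $0$; this scale mismatch is the one point needing care, and it is resolved by monotonicity: since $s_0\le s_1$, every scale-$s_0$-chain is a scale-$s_1$-chain (the relevant points satisfy $d(x,y)<2s_0\le 2s_1$), so each scale-$s_0$-component of $W_0$ is contained in a scale-$s_1$-component of $W_0$ and hence is uniformly bounded — the same phenomenon already recorded by the identity map $(X,d_r)\to(X,d_s)$ being $1$-Lipschitz for $s\ge r$. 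Therefore $X_0'$ has scale-$s_0$-dimension $0$, and $X_0'\cup X_1'=X_0$ yields $X=X_0'\cup X_1'\cup X_2'\cup\cdots\cup X_{k+1}'$, establishing condition 3. I expect no serious obstacle beyond this monotonicity observation; the only implicit hypothesis worth flagging is $\alpha_0\ge 1$, needed for $\beta_1=\alpha_0-1$ to take values in $[0,\infty)$, which holds automatically in the case of interest where $\alpha_0\equiv 1$.
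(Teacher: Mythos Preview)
Your proof is correct and follows essentially the same approach as the paper: apply the original profile to the shifted array $(s_1,\ldots,s_{k+1})$ and re-index. The paper's one-line argument simply asserts that ``that decomposition works'' for the new profile, while you supply the two details the paper elides---explicitly peeling one scale-$s_1$-dimension-$0$ piece off $X_0$ to form $X_0'$ and $X_1'$, and invoking monotonicity of scale-$r$-dimension in $r$ to pass from scale-$s_1$-dimension~$0$ to scale-$s_0$-dimension~$0$.
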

\begin{proof}
Indeed, given a non-decreasing array $(r_0,\ldots,r_{k+1})$,
one picks a decomposition of $X$ for the array $(r_1,\ldots,r_{k+1})$
and functions $(\alpha_0,\ldots,\alpha_k)$. Notice that decomposition
works for the array $(r_0,\ldots,r_{k+1})$ and functions $(1,\alpha_0-1,\ldots,\alpha_k)$ as well.
\end{proof}

Quite often it is convenient to deal only with natural numbers:
\begin{Definition}
Suppose $X$ is an $\infty$-pseudo-metric space.
A finite array of functions $(\alpha_0,\ldots,\alpha_k)$ from
positive natural numbers to positive natural numbers is an \textbf{integral APD profile} of $X$ if
the following conditions are satisfied:\\
1.
$\alpha_0$ is constant,\\
2. each function $ \alpha_i$, $0\leq i\leq k$, is non-decreasing,\\
3. for any non-decreasing array $(r_0,\ldots,r_{k})$ of positive natural numbers
there is a decomposition of $X$ as the union of
its subsets $X_0,\ldots, X_k$ such that each $X_i$, $0\leq i\leq k$,
has scale-$r_{i}$-dimension at most $\alpha_i(r_{i-1})-1$.
\end{Definition}

\begin{Proposition}
Suppose $X$ is an $\infty$-pseudo-metric space.
$X$ has an integral APD profile if and only if it has an APD profile.
\end{Proposition}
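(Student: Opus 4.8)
The plan is to prove both implications by passing between real parameters and natural-number parameters. The direction ``APD profile implies integral APD profile'' is essentially trivial: if $(\alpha_0,\ldots,\alpha_k)$ is an APD profile, restrict each $\alpha_i$ to the positive natural numbers; condition~3 for natural $(r_0,\ldots,r_k)$ is a special case of condition~3 for real $(r_0,\ldots,r_k)$, and monotonicity and constancy are inherited. (If one insists the values be natural numbers, replace $\alpha_i$ by $\lceil\alpha_i\rceil$, which stays non-decreasing and only weakens the dimension bound since $\alpha_i(r_{i-1})-1\le \lceil\alpha_i(r_{i-1})\rceil-1$.)

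For the reverse direction, suppose $(\alpha_0,\ldots,\alpha_k)$ is an integral APD profile. First I would extend each $\alpha_i$ to a non-decreasing function $\tilde\alpha_i\colon[0,\infty)\to[0,\infty)$ by setting $\tilde\alpha_i(t)=\alpha_i(\lceil t\rceil)$ for $t>0$ and $\tilde\alpha_i(0)=\alpha_i(1)$; this is non-decreasing, and $\tilde\alpha_0$ is still constant. Now given an arbitrary non-decreasing array $(r_0,\ldots,r_k)$ of positive reals, set $n_i=\lceil r_i\rceil$, so $(n_0,\ldots,n_k)$ is a non-decreasing array of positive naturals with $n_i\ge r_i$. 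Apply the integral profile to $(n_0,\ldots,n_k)$ to get a decomposition $X=X_0\cup\cdots\cup X_k$ with each $X_i$ of scale-$n_i$-dimension at most $\alpha_i(n_{i-1})-1$. The key point is a monotonicity observation: if a set has scale-$n_i$-dimension at most $d$ and $r_i\le n_i$, then it has scale-$r_i$-dimension at most $d$ — indeed any scale-$r_i$-chain is a scale-$n_i$-chain (balls of radius $r_i$ are contained in balls of radius $n_i$), so scale-$r_i$-components refine scale-$n_i$-components and hence each scale-$r_i$-dimension-$0$ piece in a decomposition witnessing scale-$n_i$-dimension remains scale-$r_i$-dimension $0$. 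Thus each $X_i$ has scale-$r_i$-dimension at most $\alpha_i(n_{i-1})-1=\alpha_i(\lceil r_{i-1}\rceil)-1=\tilde\alpha_i(r_{i-1})-1$, so $(\tilde\alpha_0,\ldots,\tilde\alpha_k)$ is an APD profile of $X$.

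I expect the main (minor) obstacle to be making the monotonicity-in-scale claim precise and recording it cleanly, since it is the only place where the geometry of scale-$r$-chains is actually used; everything else is bookkeeping with ceilings and the definitions. A secondary subtlety is the edge case $i=0$: here $\alpha_0$ is constant, so the value $\alpha_0(n_{-1})$ does not depend on the (undefined) $r_{-1}$, matching the Remark following the definition of APD profile, and the argument goes through verbatim with $\tilde\alpha_0\equiv\alpha_0(1)$.
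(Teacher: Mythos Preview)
Your proposal is correct and follows essentially the same route as the paper: the paper also rounds the $\alpha_i$ for the forward direction and, for the converse, defines the new profile by $\alpha_i(\lfloor t\rfloor+1)$ (your $\alpha_i(\lceil t\rceil)$ is the same idea). You are more explicit than the paper in justifying the monotonicity-in-scale step (that scale-$n_i$-dimension bounds imply scale-$r_i$-dimension bounds when $r_i\le n_i$), which the paper leaves implicit.
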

\begin{proof}
Given any APD profile of $X$ one can easily create an integral APD profile of $X$ by taking integer parts of the functions.

Conversely, given an integral APD profile of $X$, $(\alpha_0,\ldots,\alpha_k)$, one constructs an APD profile of $X$
by applying functions in $(\alpha_0,\ldots,\alpha_k)$ to
integer parts of $(r_0,\ldots,r_{k})$ plus $1$.
\end{proof}

\begin{Proposition}
Suppose $f:X\to Y$ is a coarse embedding of $\infty$-pseudo-metric spaces
and $\beta:[0,\infty)\to [0,\infty)$ is a non-decreasing function such that
$d_X(x,y)\leq r < \infty$ implies $d_Y(f(x),(y))\leq \beta(r)$ for all $x,y\in X$. If $(\alpha_0,\ldots,\alpha_k)$ is an APD profile of $Y$,
then $(\alpha_0\circ\beta,\ldots,\alpha_k \circ\beta)$ is an APD profile of $X$.
\end{Proposition}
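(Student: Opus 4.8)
First I would check the three conditions in the definition of APD profile for $X$ with the array $(\alpha_0\circ\beta,\ldots,\alpha_k\circ\beta)$: condition 1 is clear, since $\alpha_0$, hence $\alpha_0\circ\beta$, is constant, and condition 2 holds because a composition of non-decreasing functions is non-decreasing. For condition 3, given a non-decreasing array $(s_0,\ldots,s_k)$ of positive reals, I would observe that $(\beta(s_0),\ldots,\beta(s_k))$ is again non-decreasing (and positive) since $\beta$ is non-decreasing, and apply the APD profile $(\alpha_0,\ldots,\alpha_k)$ of $Y$ to it, obtaining a decomposition $Y=Y_0\cup\cdots\cup Y_k$ with each $Y_i$ of scale-$\beta(s_i)$-dimension at most $\alpha_i(\beta(s_{i-1}))-1$; then I would set $X_i:=f^{-1}(Y_i)$, so that $X=X_0\cup\cdots\cup X_k$.

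The crux is the claim that if a subset $B\subset Y$ has scale-$\beta(s)$-dimension $0$, then $f^{-1}(B)\subset X$ has scale-$s$-dimension $0$. To prove this I would take a scale-$s$-chain $x_0,\ldots,x_n$ in $f^{-1}(B)$; for each $l$ the overlap $B(x_l,s)\cap B(x_{l+1},s)\ne\emptyset$ supplies a point $z_l$ with $d_X(x_l,z_l)<s$ and $d_X(x_{l+1},z_l)<s$, and the hypothesis on $\beta$ yields $d_Y(f(x_l),f(z_l))\le\beta(s)$ and $d_Y(f(x_{l+1}),f(z_l))\le\beta(s)$, so $f(x_0),\ldots,f(x_n)$ is a scale-$\beta(s)$-chain in $B$. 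Hence $f(x_0)$ and $f(x_n)$ lie in a single scale-$\beta(s)$-component of $B$, whose diameter is at most the metric-norm $M$ of the scale-$\beta(s)$-components array of $B$. I would then use that $f$ is a coarse embedding, with a non-decreasing lower control $\rho$ satisfying $\rho(d_X(x,y))\le d_Y(f(x),f(y))$ and $\rho(t)\to\infty$: from $\rho(d_X(x_0,x_n))\le M$ we get $d_X(x_0,x_n)\le\sup\{t:\rho(t)\le M\}<\infty$, a bound independent of the chain, so the scale-$s$-components array of $f^{-1}(B)$ has finite metric-norm.

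With this claim in hand, I would write each $Y_i$ as a union of at most $\alpha_i(\beta(s_{i-1}))$ subspaces of scale-$\beta(s_i)$-dimension $0$; their $f$-preimages are then subspaces of scale-$s_i$-dimension $0$ whose union is $X_i$, so $X_i$ has scale-$s_i$-dimension at most $\alpha_i(\beta(s_{i-1}))-1=(\alpha_i\circ\beta)(s_{i-1})-1$. This is exactly condition 3 for the array $(\alpha_0\circ\beta,\ldots,\alpha_k\circ\beta)$, finishing the argument. I expect the main obstacle to be twofold: matching the ball radii carefully when pushing a scale-$s$-chain in $X$ forward to a scale-$\beta(s)$-chain in $Y$ — the delicate point being the strict-versus-non-strict inequalities implicit in the ball convention, which is what pins down the scale at which the profile of $Y$ must be evaluated — and, more essentially, the use of the lower control of the coarse embedding to transport diameter bounds from $Y$ back to $X$; a mere bornologous (coarse) map, which may collapse large sets, would not suffice.
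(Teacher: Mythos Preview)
Your proof is correct and follows essentially the same approach as the paper: apply $\beta$ to the given array, invoke the APD profile of $Y$ for the transformed array, and pull back via $f^{-1}$. The paper merely asserts in one line that the preimage of a set of scale-$\beta(s_i)$-dimension $0$ is of scale-$s_i$-dimension $0$, whereas you have supplied the argument for this step (pushing chains forward via the upper control $\beta$ and pulling diameter bounds back via the lower control of the coarse embedding), which is exactly the content the paper leaves implicit.
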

\begin{proof}
Suppose $(r_0,\ldots,r_{k})$ is a non-decreasing array of positive real numbers.
Define a new array $(s_0,\ldots,s_{k})$ as the image of $(r_0,\ldots,r_{k})$
under $\beta$.
Choose a decomposition of $Y$ as the union of
its subsets $Y_0,\ldots, Y_k$ such that each $Y_i$, $0\leq i\leq k$,
has scale-$s_{i}$-dimension at most $\alpha_i(s_{i-1})-1$.
Notice the point-inverse under $f$ of a set
of scale-$s_{i}$-dimension $0$ is of scale-$r_{i}$-dimension $0$.
Therefore we get a decomposition of $X$ as the union of
its subsets $X_0=f^{-1}(Y_0),\ldots, X_k=f^{-1}(Y_k)$ such that each $X_i$, $0\leq i\leq k$,
has scale-$r_{i}$-dimension at most $\alpha_i(\beta(r_{i-1}))-1$.
\end{proof}

\begin{Corollary}
Having an APD profile is a hereditary coarse invariant.
\end{Corollary}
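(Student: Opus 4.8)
The plan is to read this corollary off directly from the preceding Proposition, which already carries all of the content. The statement asserts two things: that having an APD profile passes to subspaces (heredity) and that it is preserved under coarse equivalence (coarse invariance).

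For heredity, I would first observe that for a subspace $A$ of an $\infty$-pseudo-metric space $X$ the inclusion $A\hookrightarrow X$ is a coarse embedding whose control function may be taken to be the identity: since $d_X$ restricts to the metric on $A$, the implication ``$d_A(x,y)\le r<\infty$ implies $d_X(x,y)\le\beta(r)$'' holds with $\beta(r)=r$, which is non-decreasing. Hence if $(\alpha_0,\ldots,\alpha_k)$ is an APD profile of $X$, the preceding Proposition applied to this inclusion yields that $(\alpha_0\circ\beta,\ldots,\alpha_k\circ\beta)=(\alpha_0,\ldots,\alpha_k)$ is an APD profile of $A$.

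For coarse invariance, suppose $X$ and $Y$ are coarsely equivalent and $X$ has an APD profile $(\alpha_0,\ldots,\alpha_k)$. A coarse equivalence $g\colon Y\to X$ is in particular a coarse embedding, so it admits a non-decreasing control function $\beta$ with $d_Y(y,y')\le r<\infty$ implying $d_X(g(y),g(y'))\le\beta(r)$; applying the preceding Proposition to $g$ produces the APD profile $(\alpha_0\circ\beta,\ldots,\alpha_k\circ\beta)$ of $Y$. The reverse implication follows by running the same argument with the coarse inverse of $g$. I expect no genuine obstacle here: the only point to verify carefully is that subspace inclusions and the legs of a coarse equivalence satisfy the hypotheses of the preceding Proposition, i.e. that they are coarse embeddings equipped with suitable control functions $\beta$, and both of these facts are routine.
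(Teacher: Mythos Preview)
Your proposal is correct and matches the paper's approach: the paper states the corollary with no proof at all, indicating it is meant to be read off immediately from the preceding Proposition exactly as you do. Your verification that inclusions and legs of a coarse equivalence are coarse embeddings admitting non-decreasing control functions is the routine check the paper implicitly leaves to the reader.
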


\begin{Remark}
Notice that the minimal length of APD profiles is also a hereditary coarse invariant which may be considered as the asymptotic dimension of higher order.
\end{Remark}

\begin{Corollary}
Suppose $(\alpha_0,\ldots,\alpha_k)$ is an APD profile of $X$.
If $\beta:[0,\infty)\to [0,\infty)$ is a non-decreasing function such that
$\beta(a+b)\leq \beta(a)+\beta(b)$ for all $a,b\in [0,\infty)$
and $\lim\limits_{t\to\infty}\beta(t)=\infty$,
then $X$ is coarsely equivalent to $X$ with a new $\infty$-pseudo-metric with APD profile $(\alpha_0\circ\beta,\ldots,\alpha_k \circ\beta)$.
\end{Corollary}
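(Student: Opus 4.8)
The plan is to reduce the statement to the preceding Proposition by re-gauging the metric of $X$ through $\beta$. Since an APD profile is only tested on positive scales, we may assume $\beta(0)=0$ (modifying $\beta$ at $0$ changes neither $(\alpha_i\circ\beta)$ on positive arguments nor the generalized inverse of $\beta$, and leaves $\beta$ non-decreasing and subadditive). Let $\gamma(t)=\inf\{s\ge 0:\beta(s)\ge t\}$ be the generalized inverse of $\beta$; it is non-decreasing, finite-valued, and satisfies $\gamma(t)\to\infty$ because $\beta$ is finite-valued with $\lim_{t\to\infty}\beta(t)=\infty$. I would then take the new metric to be $d'(x,y)=\gamma\bigl(d_X(x,y)\bigr)$ for $x\ne y$ and $d'(x,x)=0$, so that $d'(x,y)\le r$ is equivalent to $d_X(x,y)\le\beta(r)$.

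Two things then have to be checked. First, that $d'$ is an $\infty$-pseudo-metric: symmetry and vanishing on the diagonal are clear, and the triangle inequality is precisely where the properties of $\beta$ are exploited; concretely, one wants $d_X(x,y)\le\beta(a)$ and $d_X(y,z)\le\beta(b)$ to force $d_X(x,z)\le\beta(a+b)$, which one reads off from monotonicity and subadditivity of $\beta$. Second, that the identity map realizes a coarse equivalence between $X$ and $(X,d')$: the equivalence $d'(x,y)\le r\iff d_X(x,y)\le\beta(r)$ makes $\mathrm{id}\colon(X,d')\to X$ bornologous, while $d_X(x,y)\le s\Rightarrow d'(x,y)\le\gamma(s)$ makes $\mathrm{id}\colon X\to(X,d')$ bornologous; the two composites being the identity, this is a coarse equivalence, and in particular $\mathrm{id}\colon(X,d')\to X$ is a coarse embedding with $d'(x,y)\le r\Rightarrow d_X(x,y)\le\beta(r)$.

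Granting this, the preceding Proposition applies immediately with $f=\mathrm{id}\colon(X,d')\to X$, control function $\beta$, and the given APD profile $(\alpha_0,\dots,\alpha_k)$ of the target $X$: it outputs $(\alpha_0\circ\beta,\dots,\alpha_k\circ\beta)$ as an APD profile of $(X,d')$. Concretely, for a non-decreasing array $(r_0,\dots,r_k)$ of positive reals one decomposes $X$ for the array $(\beta(r_0),\dots,\beta(r_k))$ into $X_0\cup\dots\cup X_k$ with $X_i$ of scale-$\beta(r_i)$-dimension at most $\alpha_i(\beta(r_{i-1}))-1$, and since a $d'$-scale-$r_i$-chain inside $X_i$ is a $d_X$-scale-$\beta(r_i)$-chain inside $X_i$, the same decomposition gives the required bound for the $d'$-scale-$r_i$-dimension of $X_i$.

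The step I expect to be delicate is the first verification, namely producing a genuine re-gauged $\infty$-pseudo-metric: one must ensure the triangle inequality survives the re-gauging and that the resulting space really stays coarsely equivalent to $X$. This is exactly the point at which subadditivity, monotonicity, and $\lim_{t\to\infty}\beta(t)=\infty$ are all needed together, and if $\gamma\circ d_X$ itself fails the triangle inequality one would instead pass to the largest $\infty$-pseudo-metric it dominates and do the extra work of checking that this preserves both the coarse type of $X$ and the control $d'(x,y)\le r\Rightarrow d_X(x,y)\le\beta(r)$. Once a suitable $d'$ is in hand, the rest is a direct citation of the Proposition.
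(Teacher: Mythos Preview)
Your re-gauging goes in the wrong direction, and the triangle-inequality step does not work. You set $d'=\gamma\circ d_X$ with $\gamma$ the generalized inverse of $\beta$, and justify the triangle inequality by saying that $d_X(x,y)\le\beta(a)$ and $d_X(y,z)\le\beta(b)$ force $d_X(x,z)\le\beta(a+b)$ ``from monotonicity and subadditivity of $\beta$''. But the triangle inequality for $d_X$ only gives $d_X(x,z)\le\beta(a)+\beta(b)$, and subadditivity says $\beta(a+b)\le\beta(a)+\beta(b)$ --- the inequality points the wrong way, so you cannot conclude $d_X(x,z)\le\beta(a+b)$. In fact the generalized inverse of a subadditive function is typically superadditive, not subadditive: for the admissible choice $\beta(t)=\sqrt{t}$ one has $\gamma(t)=t^{2}$, and $d_X^{\,2}$ violates the triangle inequality already on three collinear points of $\mathbb{R}$ (take $0,1,2$). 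Your fallback of replacing $\gamma\circ d_X$ by the largest pseudo-metric it dominates does not obviously preserve the control $d'(x,y)\le r\Rightarrow d_X(x,y)\le\beta(r)$ that you need for the Proposition, so this is a genuine gap rather than a technicality.

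The paper takes the opposite composition: the new $\infty$-pseudo-metric is $\beta\circ d_X$. This is precisely where the hypotheses on $\beta$ are used --- monotonicity and subadditivity give the triangle inequality for $\beta\circ d_X$ directly, and $\lim_{t\to\infty}\beta(t)=\infty$ ensures the identity map is a coarse equivalence. In other words, the assumptions on $\beta$ are tailored to make $\beta\circ d_X$ (not $\gamma\circ d_X$) a valid $\infty$-pseudo-metric. One may note that the paper's one-line proof is terse about how the new space acquires the specific profile $(\alpha_i\circ\beta)$, but the construction of the metric is the point at which your argument diverges from the paper's and breaks down.
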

\begin{proof}
The new $\infty$-pseudo-metric on $X$ is composition of the original $\infty$-pseudo-metric with $\beta$ and the identity map between them is a coarse equivalence.
\end{proof}

\begin{Definition}\label{AsymptoticPropertyCDef}
Suppose $X$ is an $\infty$-pseudo-metric space.
$X$ has \textbf{Asymptotic Property C} if for each 
sequence $\{r_i\}_{i\ge 1}$ there is a natural number $n$ and a decomposition
$X=\bigcup\limits_{i=1}^m X_i$ such that each $X_i$ is of scale-$r_i$-dimension $0$.
\end{Definition}

\begin{Definition}
Suppose $X$ is an $\infty$-pseudo-metric space.
$X$ has \textbf{Asymptotic Property D} if $X$ has an APD profile.
\end{Definition}

\begin{Proposition}
Suppose $X$ is an $\infty$-pseudo-metric space.
If $X$ has Asymptotic Property D, then $X$ has Asymptotic Property C.
\end{Proposition}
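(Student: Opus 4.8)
The plan is to derive Asymptotic Property C from a single application of an APD profile, exploiting the monotonicity of scale-dimension in the scale parameter together with an adaptive choice of the profile's input array.

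First I would record the elementary fact that if $0<r\le r'$ then every subset of scale-$r'$-dimension $0$ is also of scale-$r$-dimension $0$: a scale-$r$-chain is in particular a scale-$r'$-chain, so each scale-$r$-component is contained in a scale-$r'$-component, and hence the scale-$r$-components array still has finite metric-norm. Consequently, given a sequence $\{t_j\}_{j\ge 1}$ of positive reals, it suffices to pass to the non-decreasing sequence of positive integers $t'_j=\max\{\lceil t_1\rceil,\ldots,\lceil t_j\rceil\}$ (so $t'_j\ge t_j$) and to produce a finite decomposition $X=\bigcup_{j=1}^{N}Y_j$ with each $Y_j$ of scale-$t'_j$-dimension $0$; such a $Y_j$ is then automatically of scale-$t_j$-dimension $0$, which is what Asymptotic Property C demands. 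By the Proposition equating APD profiles with integral ones, we may fix an integral APD profile of $X$, say $(\alpha_0,\ldots,\alpha_k)$, and write $n_0$ for the constant value of $\alpha_0$.

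Next I would build the input array $(r_0,\ldots,r_k)$ for the profile adaptively, carrying along a running count $m_i$ of the pieces produced through stage $i$: set $m_0=n_0$ and $r_0=t'_{n_0}$, and, having defined $r_0\le\cdots\le r_{i-1}$ and $m_{i-1}$, put $m_i=m_{i-1}+\alpha_i(r_{i-1})$ and $r_i=t'_{m_i}$. Since each $\alpha_i$ takes positive integer values and $t'$ is non-decreasing, $m_0<m_1<\cdots<m_k$ and $(r_0,\ldots,r_k)$ is a non-decreasing array of positive integers, so the integral APD profile of $X$ applies to it and produces $X=X_0\cup\cdots\cup X_k$ with $X_i$ of scale-$r_i$-dimension at most $\alpha_i(r_{i-1})-1$ (reading $\alpha_0(r_{-1})=n_0$). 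Hence $X_i=\bigcup_{l=1}^{\alpha_i(r_{i-1})}X_{i,l}$ with each $X_{i,l}$ of scale-$r_i$-dimension $0$.

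Finally I would relabel these $N=m_k$ pieces block by block: for $m_{i-1}<j\le m_i$ (with $m_{-1}=0$) set $Y_j=X_{i,\,j-m_{i-1}}$, so that $X=\bigcup_{j=1}^{N}Y_j$. Each $Y_j$ has scale-$r_i$-dimension $0$, and $j\le m_i$ forces $t'_j\le t'_{m_i}=r_i$; by the fact recorded in the second paragraph, $Y_j$ then has scale-$t'_j$-dimension $0$, hence scale-$t_j$-dimension $0$. This is precisely the finite decomposition required by Asymptotic Property C. I anticipate no genuine difficulty: the only matters needing care are the bookkeeping between the integers $\alpha_i(r_{i-1})$ and the convention ``dimension at most $\alpha_i-1$'' (neutralized by splitting each $X_i$ into $\alpha_i(r_{i-1})$ pieces of scale-$r_i$-dimension $0$) and the two uses of the principle that enlarging the scale only makes scale-dimension harder to attain.
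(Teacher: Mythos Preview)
Your proposal is correct and follows essentially the same approach as the paper's proof: adaptively choose the scales $r_0,\ldots,r_k$ to feed into the APD profile so that the running partial sums $m_i=\sum_{j\le i}\alpha_j(r_{j-1})$ keep pace with the given sequence, then decompose each $X_i$ into its $\alpha_i(r_{i-1})$ pieces of scale-$r_i$-dimension~$0$ and relabel. Your bookkeeping is in fact tidier than the paper's (which has minor index slips in defining its $s_i$), and your explicit appeal to the monotonicity of scale-$r$-dimension in $r$ fills in a step the paper leaves implicit.
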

\begin{proof}
Pick an APD profile $(\alpha_0,\ldots,\alpha_k)$ for $X$.
Given a sequence $\{r_i\}_{i\ge 1}$ of real numbers it suffices to consider the case of it consisting of natural numbers and being increasing. Define a new array $\{s_i\}_{i\ge 1}^k$ as follows: $s_1=r_{\alpha_0(1)}$, $s_{i+1}=r_{p(i)}$, where $p(i)=\sum\limits_{j=1}^i\alpha_j(s_j)$, for $i < k$. Choose a decomposition of $X$ as the union of
its subsets $X_0,\ldots, X_k$ such that each $X_i$, $0\leq i\leq k$,
has scale-$s_{i}$-dimension at most $\alpha_i(s_{i-1})-1$.
In turn, each $X_i$ decomposes into $\alpha_i(s_{i-1})$-sets of scale
scale-$s_{i}$-dimension $0$. All those sets can be enumerated
in such a way that the $i$-th set is of scale-$r_{i}$-dimension $0$
for $i\leq p(k)$.
\end{proof}

\begin{Question}
Is there a metric space having Asymptotic Property C but not having Asymptotic Property D?
\end{Question}

\begin{Proposition}
Suppose $X$ is an $\infty$-pseudo-metric space. The following conditions are equivalent:\\
1. $X$ has Asymptotic Property D,\\
2. There is $k\ge 0$ and an array of integer-valued functions $(\alpha_0,\ldots,\alpha_k)$ such that $\alpha_i$ is defined on non-decreasing
sequences of non-negative integers $r_0,\ldots,r_{i-1}$,
$\alpha_0$ is constant and
for any non-decreasing sequence of natural numbers $r_0,\ldots,r_{k}$
there is a decomposition of $X$ as the union of
its subsets $X_0,\ldots, X_k$ such that each $X_i$, $0\leq i\leq k$,
is of scale-$r_{i}$-dimension at most $\alpha_i(r_0,\ldots,r_{i-1})-1$.
\end{Proposition}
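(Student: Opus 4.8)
The plan is to treat the two implications separately. The implication $1)\Rightarrow 2)$ should be essentially formal: condition $2)$ is \emph{weaker} than the defining condition of an APD profile once one passes to the integral version, because it permits $\alpha_i$ to be integer‑valued and to depend on the whole history $r_0,\ldots,r_{i-1}$. So first I would invoke the Proposition above asserting that $X$ has an APD profile if and only if it has an integral APD profile, obtaining functions $(\beta_0,\ldots,\beta_k)$ of a single variable with $\beta_0$ constant; then I would simply regard each $\beta_i$, $i\ge 1$, as a function of $(r_0,\ldots,r_{i-1})$ that ignores all but the last coordinate, and keep $\alpha_0:=\beta_0$. The decomposition demanded in $2)$ is then verbatim the one supplied by the integral APD profile.

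The substance is in $2)\Rightarrow 1)$. Here the obstacle — and really the only non‑routine point — is to remove the dependence of $\alpha_i$ on the full history and to restore monotonicity, since condition $2)$ does not assume the $\alpha_i$ are non‑decreasing. My plan is to exploit the fact that the admissible histories $r_0\le\cdots\le r_{i-1}$ are non‑decreasing sequences of \emph{natural} numbers bounded by $r_{i-1}$, so for a fixed value $r$ of $r_{i-1}$ there are only finitely many of them. I would therefore first replace each $\alpha_i$ by $\max(\alpha_i,1)$ (this only weakens $2)$ further), and then define, for $i\ge 1$,
$$\tilde\alpha_i(r):=\max\{\alpha_i(s_0,\ldots,s_{i-1}) : 1\le s_0\le\cdots\le s_{i-1}\le r\},$$
a finite maximum of positive integers, together with $\tilde\alpha_0:=\alpha_0$.

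Then I would verify the three requirements in the definition of an integral APD profile for $(\tilde\alpha_0,\ldots,\tilde\alpha_k)$: $\tilde\alpha_0$ is constant by construction; each $\tilde\alpha_i$ is non‑decreasing because enlarging $r$ only enlarges the index set over which the maximum is taken; and for a non‑decreasing sequence $r_0,\ldots,r_k$ the decomposition $X=X_0\cup\cdots\cup X_k$ furnished by $2)$ has $X_i$ of scale‑$r_i$‑dimension at most $\alpha_i(r_0,\ldots,r_{i-1})-1\le\tilde\alpha_i(r_{i-1})-1$, since $(r_0,\ldots,r_{i-1})$ is one of the histories competing in the maximum defining $\tilde\alpha_i(r_{i-1})$. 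Finally I would apply the equivalence of integral APD profiles and APD profiles once more to conclude that $X$ has an APD profile, i.e. Asymptotic Property D. I expect no genuine difficulty beyond bookkeeping; the one place to be careful is precisely that $2)$ drops monotonicity, which is why the passage to a maximum — rather than merely restricting the domain of $\alpha_i$ — is needed.
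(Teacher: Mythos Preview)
Your proposal is correct and follows essentially the same approach as the paper: for $1)\Rightarrow 2)$ one simply reinterprets an integral APD profile, and for $2)\Rightarrow 1)$ one collapses each multi-variable $\alpha_i$ to a single-variable $\tilde\alpha_i$ by taking a maximum over the finitely many admissible histories bounded by $r$. The paper's version bounds the histories recursively by the previously constructed $\beta_j(r)$ rather than directly by $r$, but the mechanism---a finite maximum guaranteeing monotonicity and domination---is identical, and your formulation is in fact the cleaner of the two.
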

\begin{proof} 1)$\implies$2). Use an integral APD profile of $X$.\\
2)$\implies$1). Define $\beta_0=\alpha_0$.
Given functions $\beta_i$, $i < k$, from natural numbers to natural numbers, define $\beta_{i+1}(r)$
as the maximum of all numbers
$\alpha_i(r_0,\ldots,r_{i-1})$, where $r_j\leq \beta_j(r)$ for $j\leq i-1$.
Notice $(\beta_0,\ldots,\beta_k)$ is an integral APD profile of $X$.
\end{proof}

\begin{Theorem}
Suppose $X$ and $Y$ are subspaces of an $\infty$-pseudo-metric space $Z$. If $(1,\alpha)$ is an APD profile of $X$ and $(1,\beta)$ is an APD profile of of $Y$,
then $(2,\max(\alpha,\beta))$ is an APD profile of of $X\cup Y$.
\end{Theorem}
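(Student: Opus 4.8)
The plan is to decompose $X$ and $Y$ separately, each by its own APD profile but with carefully chosen scale parameters, and then to amalgamate the resulting pieces index by index with the help of Lemma \ref{rDimZeroOfUnionOfSets}. The key point is that the \emph{order} in which the two decompositions are produced matters: Lemma \ref{rDimZeroOfUnionOfSets} lets us absorb a set of scale-$r$-dimension $0$ into another one provided the latter is already ``tame'' at a much coarser scale, and this forces us to look at $X$ first and only then tailor the scale used for $Y$. Throughout, all scale-dimensions are taken in the ambient space $Z$.

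Fix a non-decreasing pair $(r_0,r_1)$ of positive reals; we must exhibit $X\cup Y=W_0\cup W_1$ with $W_0$ of scale-$r_0$-dimension at most $1$ and $W_1$ of scale-$r_1$-dimension at most $\max(\alpha,\beta)(r_0)-1$. First apply the APD profile $(1,\alpha)$ of $X$ to $(r_0,r_1)$: this yields $X=X_0\cup X_1$ with $X_0$ of scale-$r_0$-dimension $0$ and $X_1=X_1^{(1)}\cup\dots\cup X_1^{(a)}$, where $a$ does not exceed $\alpha(r_0)$ and each $X_1^{(j)}$ is of scale-$r_1$-dimension $0$. Let $M$ be an upper bound for the (finitely many) metric-norms of the scale-$r_1$-components arrays of $X_1^{(1)},\dots,X_1^{(a)}$. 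Only now apply the APD profile $(1,\beta)$ of $Y$, to the pair $(r_0,M+2r_1)$ — legitimate since $r_0\le r_1<M+2r_1$ — obtaining $Y=Y_0\cup Y_1$ with $Y_0$ of scale-$r_0$-dimension $0$ and $Y_1=Y_1^{(1)}\cup\dots\cup Y_1^{(b)}$, where $b$ does not exceed $\beta(r_0)$ and each $Y_1^{(j)}$ is of scale-$(M+2r_1)$-dimension $0$, hence also of scale-$r_1$-dimension $0$ (scale-$r$-components of a fixed set only grow as $r$ increases, so scale-dimension $0$ at a larger scale implies it at a smaller one).

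Now put $W_0:=X_0\cup Y_0$, a union of two sets of scale-$r_0$-dimension $0$, hence of scale-$r_0$-dimension at most $1$; and $W_1:=X_1\cup Y_1$, so that $W_0\cup W_1=X\cup Y$. For $1\le j\le\min(a,b)$ pair $X_1^{(j)}$ with $Y_1^{(j)}$; then $W_1$ is the union of the sets $X_1^{(j)}\cup Y_1^{(j)}$ for such $j$ together with the leftover sets among the $X_1^{(j)},Y_1^{(j)}$ with $j>\min(a,b)$ — a total of $\max(a,b)$ sets, and $\max(a,b)\le\max\bigl(\alpha(r_0),\beta(r_0)\bigr)$. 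Every leftover set is of scale-$r_1$-dimension $0$: the $X_1^{(j)}$ by construction, the $Y_1^{(j)}$ by the monotonicity noted above. For a paired set, $X_1^{(j)}$ has scale-$r_1$-components array of metric-norm at most $M$ and $Y_1^{(j)}$ has scale-$(M+2r_1)$-components array of finite metric-norm, so Lemma \ref{rDimZeroOfUnionOfSets}, applied with $r=r_1$, shows that $X_1^{(j)}\cup Y_1^{(j)}$ is again of scale-$r_1$-dimension $0$. Thus $W_1$ is a union of at most $\max(\alpha,\beta)(r_0)$ sets of scale-$r_1$-dimension $0$, i.e.\ of scale-$r_1$-dimension at most $\max(\alpha,\beta)(r_0)-1$. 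Since the constant $2$ and the non-decreasing function $\max(\alpha,\beta)$ satisfy the first two conditions of an APD profile, this finishes the proof.

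The one genuinely non-routine ingredient is the asymmetric bookkeeping of scales: we run the decomposition of $X$ first, read off the bound $M$ on the sizes of its scale-$r_1$-components, and only afterwards feed the coarser scale $M+2r_1$ into the profile of $Y$, so that Lemma \ref{rDimZeroOfUnionOfSets} can be applied in each matched index. Using the common scale $r_1$ for both spaces would only give scale-$r_1$-dimension at most $a+b-1$ for $W_1$, far weaker than the claimed $\max(\alpha,\beta)(r_0)-1$. Everything else — reindexing the pieces, the monotonicity of scale-dimension in the scale, and the invocation of Lemma \ref{rDimZeroOfUnionOfSets} — is straightforward.
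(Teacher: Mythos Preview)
Your argument is correct and follows essentially the same route as the paper: decompose $X$ first at scales $(r_0,r_1)$, record the bound $M$ on the scale-$r_1$-components of the higher-index pieces, then decompose $Y$ at the inflated pair $(r_0,M+2r_1)$ and merge matched pieces via Lemma~\ref{rDimZeroOfUnionOfSets}. The only cosmetic difference is that the paper begins with the symmetry-breaking assumption $\alpha(r_0)\le\beta(r_0)$ so that every $X$-piece gets paired, whereas you handle both leftover cases explicitly; the content is the same.
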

\begin{proof}
Given $r\leq s$ we may assume $\alpha(r)\leq \beta(r)$
and pick a decomposition $X=X_0\cup\ldots \cup X_p$,
$p=\lfloor\alpha(r)\rfloor$, where $X_0$ is of scale-$r$-dimension $0$
and each $X_i$, $i > 0$, is of scale-$s$-dimension $0$.
Now, pick a decomposition $Y=Y_0\cup\ldots \cup Y_q$,
$q=\lfloor\beta(r)\rfloor$, where $Y_0$ is of scale-$r$-dimension $0$
and each $Y_i$, $i > 0$, is of scale-$(M+2s)$-dimension $0$,
where the metric-norm of the $s$-components array
of each $X_i$, $i> 0$, is at most $M$.
Applying \ref{rDimZeroOfUnionOfSets} we get that
each $X_i\cup Y_i$, $1\leq i\leq p$, is of scale-$s$-dimension $0$.
Those sets replace old $X_i$ and we get a desired decomposition
of $X\cup Y$ into $2+q$ sets.
\end{proof}

An analogous proof to that of Theorem \ref{MainCharOfAsdim} gives the following:

\begin{Theorem}\label{MainCharOfBasicAPDProfile}
Suppose $X$ is an $\infty$-pseudo-metric space and $\alpha$ is a non-decreasing, non-negative-integer-valued function on integers. 
$(1,\alpha)$ is an integral APD profile of $X$
if and only if for any real number $r > 0$ and any integer $m\ge 1$ there is
an augmented $m\times (\alpha(r)+1)$-matrix $\mathcal{M}=[\mathcal{B} |\mathcal{A}]$ (that means $\mathcal{B}$ is a column-matrix and $\mathcal{A}$
is an $m\times \alpha(r)$-matrix)  of subspaces of $X$ of scale-$r$-dimension $0$
such that $\mathcal{M}\cdot_\cap \mathcal{M}^T$
is bigger than or equal to the identity matrix and
$B(\mathcal{A},r)\cdot_\cap B(\mathcal{A},r)^T$
is a diagonal matrix.
\end{Theorem}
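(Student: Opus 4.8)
The plan is to follow the proof of Theorem~\ref{MainCharOfAsdim} almost verbatim, with the number $\alpha(r)+1$ of columns playing the role of the fixed $n+1$.

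\textbf{Profile $\Rightarrow$ matrices.} Fix $r>0$ and an integer $m\ge 1$. Apply the profile $(1,\alpha)$ to the non-decreasing pair $(r_0,r_1)=(r,\,8(m+1)r)$: this gives a decomposition $X=X_0\cup Y_1\cup\cdots\cup Y_{\alpha(r)}$ with $X_0$ of scale-$r$-dimension $0$ and each $Y_j$ of scale-$8(m+1)r$-dimension $0$. Subtracting from each layer the union of the earlier ones — which does not spoil scale-$R$-dimension $0$, since a scale-$R$-chain in $A\setminus B$ is a scale-$R$-chain in $A$ — we may take the layers pairwise disjoint. Now carry out the induction of the proof of Theorem~\ref{MainCharOfAsdim} with $s=r$: feed $Y_1$ and $X_0$ into Lemma~\ref{MainDecompositionLemma} to produce the first column of $\mathcal{A}$ and the running column $\mathcal{B}$, and at step $j$ ($2\le j\le\alpha(r)$) feed $Y_j$ (which is of scale-$8(m+1)s$-dimension $0$, as the lemma demands) together with the current $\mathcal{B}$, letting the $j$-th column of $\mathcal{A}$ be $\mathcal{B}\cap\mathcal{Y}_j^{\bot}$ and replacing $\mathcal{B}$ by the complement array furnished by property~3 of Lemma~\ref{MainDecompositionLemma}. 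After $\alpha(r)$ steps, $\mathcal{M}=[\mathcal{B}\,|\,\mathcal{A}]$ has all entries of scale-$r$-dimension $0$ (the $\mathcal{B}$-entries by property~3, each column of $\mathcal{A}$ because it lies below the scale-$r$-dimension-$0$ array $\mathcal{Y}_j^{\bot}$); $\mathcal{M}\cdot_\cap\mathcal{M}^T\ge\mathcal{I}_X$ because telescoping property~3 shows that $\mathcal{B}$ together with the columns of $\mathcal{A}$ dominates $\array(X_0\cup Y_1\cup\cdots\cup Y_{\alpha(r)})=\array(X)$; and $B(\mathcal{A},r)\cdot_\cap B(\mathcal{A},r)^T$ is diagonal because each column of $\mathcal{A}$ lies below some $\mathcal{Y}_j^{\bot}$, which is scale-$r$-disjoint by property~1 and Lemma~\ref{OrthogonalMatrices}.

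\textbf{Matrices $\Rightarrow$ profile.} Taking $m=1$ gives, for every $r$, a cover of $X$ by $\alpha(r)+1$ sets of scale-$r$-dimension $0$; for a non-decreasing pair $(r_0,r_1)$ with $\alpha(r_0)=\alpha(r_1)$ this yields the required decomposition at once (write $X=C_0\cup\cdots\cup C_{\alpha(r_1)}$ at scale $r_1$, set $X_0=C_0$, which is then of scale-$r_0$-dimension $0$, and $X_1=C_1\cup\cdots\cup C_{\alpha(r_0)}$). When $\alpha(r_0)<\alpha(r_1)$ one must in addition absorb the $\alpha(r_1)-\alpha(r_0)$ extra pieces into the distinguished set; this is arranged by invoking the hypothesis along a chain of successively coarser auxiliary scales $r_0<\rho_1<\rho_2<\cdots$, each $\rho_{i+1}$ being picked only after the cover at scale $\rho_i$ has been produced, and merging via Lemma~\ref{rDimZeroOfUnionOfSets} so that every newly absorbed set has exactly the coarseness that lemma requires.

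The main obstacle is the bookkeeping in the induction of the first direction: one must check that $8(m+1)r$ is coarse enough for Lemma~\ref{MainDecompositionLemma} to be applicable at each of the $\alpha(r)$ steps, that the running column $\mathcal{B}$ stays of scale-$r$-dimension $0$ under every update, and that the columns already split off into $\mathcal{A}$ remain mutually scale-$r$-disjoint after the subsequent modifications of $\mathcal{B}$ — precisely the points that make Theorem~\ref{MainCharOfAsdim} work, reproduced here with $\alpha(r)$ in place of $n$. In the converse, the analogous delicate point is that the auxiliary coarse scales cannot be named in advance: their admissible values are dictated by the a priori uncontrolled diameter bounds on the covers returned by the hypothesis, which is what forces the chain-of-scales organization rather than a single coarse cover.
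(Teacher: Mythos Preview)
Your forward direction (profile $\Rightarrow$ matrices) is exactly what the paper intends: the paper says only that the proof is ``analogous'' to Theorem~\ref{MainCharOfAsdim}, and your argument reproduces that induction with $\alpha(r)$ in place of the constant $n$. That part is fine.

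The converse direction is where you diverge from the paper, and rightly so: the paper's ``take $m=1$'' does \emph{not} translate verbatim, because at scale $r_1$ the hypothesis hands back $\alpha(r_1)+1$ sets while the profile asks for only $\alpha(r_0)+1$. You identified this correctly. However, your proposed fix --- a chain of auxiliary scales $r_0<\rho_1<\rho_2<\cdots$ together with Lemma~\ref{rDimZeroOfUnionOfSets} --- does not close the gap as written. The difficulty is this: to absorb an extra piece $C$ into the running set $X_0$ via Lemma~\ref{rDimZeroOfUnionOfSets} at scale $r_0$, you need $C$ to have bounded scale-$(M+2r_0)$-components, where $M$ is the current component bound of $X_0$. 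But the matrix hypothesis, invoked at a scale $\rho$, only guarantees that the returned pieces are of scale-$\rho$-dimension~$0$; it says nothing about their behaviour at the much larger scale $M+2r_0$. If you try to cure this by invoking the hypothesis at the larger scale $\rho':=M+2r_0$, you now get $\alpha(\rho')+1\ge\alpha(\rho)+1$ pieces, so the surplus does not shrink and the process need not terminate. Your sketch does not say how this loop is broken, nor how the chain ever reaches~$r_1$.

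In short: the hard direction matches the paper; for the easy direction you have located a genuine lacuna that the paper glosses over, but your ``chain of scales'' outline needs a mechanism that actually reduces the number of surplus pieces at each step (or an altogether different argument), and as stated it does not supply one.
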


\section{APD profiles of products}

\begin{Lemma}\label{TwoDimensionLemma}
Suppose $X$ is an $\infty$-pseudo-metric space of finite scale-$s$-dimension for each $s > 0$.
If $X$ is of scale-$r$-dimension $0$ for some $r > 0$, then
for each $s > 0$ there is a scale-$r$-disjoint array
 $X_0,\ldots, X_p$ of scale-$s$-dimension $0$
representing a cover of $X$.
\end{Lemma}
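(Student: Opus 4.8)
The plan is to argue directly with scale-components rather than through the matrix formalism. First I would dispose of the case $s<r$: then $B(x,s)\subset B(x,r)$ for all $x$, so every scale-$s$-chain is a scale-$r$-chain and each scale-$s$-component of $X$ sits inside a scale-$r$-component; hence $X$ itself has scale-$s$-dimension $0$ and the one-element array $(X)$, being vacuously scale-$r$-disjoint, already works. So assume $s\ge r$. Since $X$ has scale-$r$-dimension $0$, fix $M$ so that every scale-$r$-component of $X$ has diameter at most $M$; recall that ``joined by a scale-$r$-chain'' is an equivalence relation, so the scale-$r$-components partition $X$. The observation that makes the argument cheap is that \emph{distinct scale-$r$-components are scale-$r$-disjoint}: if $B(C,r)\cap B(C',r)\ne\emptyset$ then some $z$ lies within $r$ of a point of $C$ and of a point of $C'$, giving a scale-$r$-chain of length $1$ that merges $C$ and $C'$. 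Consequently any family of pairwise disjoint unions of scale-$r$-components is automatically a scale-$r$-disjoint array, and the whole task reduces to partitioning the set of scale-$r$-components of $X$ into finitely many groups, each of scale-$s$-dimension $0$.

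For the construction I would set $T=2s+2M$ and invoke the hypothesis that $X$ has finite scale-$T$-dimension to write $X=Y_0\cup\dots\cup Y_p$ with each $Y_j$ of scale-$T$-dimension $0$; let $N$ bound the diameters of all scale-$T$-components of all the $Y_j$. For each scale-$r$-component $C$ of $X$ let $\nu(C)$ be the least index $j$ with $C\cap Y_j\ne\emptyset$ (well defined since the $Y_j$ cover $X$), choose $y_C\in C\cap Y_{\nu(C)}$, and put $X_j=\bigcup\{C:\nu(C)=j\}$. Then $X_0,\dots,X_p$ partition $X$ and, by the observation above, form a scale-$r$-disjoint cover of $X$.

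It remains to bound the scale-$s$-components of each $X_j$. Fix $j$ and a scale-$s$-component $D$ of $X_j$. Because $s\ge r$, $D$ is a union of scale-$r$-components, each with $\nu=j$, so the chosen representatives $y_C$ with $C\subset D$ all lie in $Y_j$. For any two scale-$r$-components $C,C'\subset D$, scale-$s$-connectedness of $D$ inside $X_j$ produces a finite sequence of distinct scale-$r$-components $C=C_0,\dots,C_k=C'$ contained in $D$ in which consecutive terms $C_i,C_{i+1}$ are linked by points $a\in C_i$, $b\in C_{i+1}$ with $d(a,b)<2s$; then
$$d(y_{C_i},y_{C_{i+1}})\le d(y_{C_i},a)+d(a,b)+d(b,y_{C_{i+1}})<M+2s+M=T,$$
so $y_{C_i}$ and $y_{C_{i+1}}$ lie in one scale-$T$-component of $Y_j$, and chaining over $i$ puts all the $y_{C_l}$ in a single scale-$T$-component of $Y_j$. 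Hence $d(y_C,y_{C'})\le N$. Finally, any two points $u,v$ of $D$ lie in scale-$r$-components $C,C'\subset D$, so $d(u,v)\le d(u,y_C)+d(y_C,y_{C'})+d(y_{C'},v)\le M+N+M$; thus $\diam D\le 2M+N$, a bound independent of $D$ and of $j$, and each $X_j$ has scale-$s$-dimension $0$.

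The one genuinely delicate point — the step I expect to need the most care — is the choice of scale $T=2s+2M$: one cannot simply reorganize a scale-$s$ decomposition of $X$, since a single scale-$r$-component may meet several of its pieces, so whole scale-$r$-components must be absorbed, and the passage from chain points to the representatives $y_C$ inflates distances by $2M$, which is exactly why the finite-dimension hypothesis must be applied at scale $T$ rather than at scale $s$. Everything else is routine triangle-inequality bookkeeping.
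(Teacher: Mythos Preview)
Your proof is correct and follows essentially the same route as the paper: bound the scale-$r$-components by $M$, invoke the finite scale-dimension hypothesis at an enlarged scale (you use $T=2M+2s$, the paper uses $M+2s+2r$), saturate each piece $Y_j$ to a union $Z_j$ of the scale-$r$-components it meets, disjointify, and then verify that each resulting set has scale-$s$-dimension $0$. The only cosmetic difference is that the paper appeals to Lemma~\ref{rDimZeroOfUnionOfSets} for that last verification while you carry out the triangle-inequality chaining through the representatives $y_C$ explicitly; your separate treatment of $s<r$ is harmless but unnecessary, since the main argument already covers it.
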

\begin{proof}
Let $M$ be the metric-norm of the $r$-components array
of $X$. Choose an array $Y_0,\ldots, Y_p$ covering $X$ of scale-$(M+2s+2r)$-dimension $0$.
Using \ref{rDimZeroOfUnionOfSets} we can see that the union $Z_i$ of 
all scale-$r$-components of $X$ intersecting $Y_i$
is of scale-$s$-dimension $0$.
Now, defining $X_0$ to be $Z_0$ and $X_i=Z_i\setminus \bigcup\limits_{j=0}^{i-1}Z_j$ for $i > 0$ leads to a desired array.
\end{proof}

\begin{Proposition}\label{MainAPDProposition}
Suppose $X$ is an $\infty$-pseudo-metric space of 
integral APD profile $(1,\alpha)$ and $Y$ is an $\infty$-pseudo-metric space of finite scale-$s$-dimension for each $s > 0$.
If $Y$ is of scale-$r$-dimension $0$ for some $r > 0$, then
 $X\times Y$ decomposes (in either $l_1$-$\infty$-pseudo-metric or the supremum-$\infty$-pseudo-metric)
into subsets $Z_0, Z_{1}$ such that $Z_0$ is of scale-$r$-dimension $0$
and $Z_1$ is of scale-$s$-dimension at most $\alpha(r)-1$.
\end{Proposition}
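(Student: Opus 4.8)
The plan is to multiply a decomposition of $Y$ coming from Lemma \ref{TwoDimensionLemma} against a two-scale refinement of the augmented matrix of Theorem \ref{MainCharOfBasicAPDProfile}. We may assume $s\ge r$: if $s<r$, run the argument for the pair $(r,r)$ and note that a subset of scale-$r$-dimension $0$ is automatically of scale-$s$-dimension $0$. Since $Y$ is of scale-$r$-dimension $0$ and of finite scale-$\tau$-dimension for every $\tau>0$, Lemma \ref{TwoDimensionLemma} yields a scale-$r$-disjoint array $\mathcal{Y}=\{Y_0,\dots,Y_p\}$ of subsets of scale-$s$-dimension $0$ covering $Y$; being scale-$r$-disjoint and uniformly bounded, $\mathcal{Y}$ is also of scale-$r$-dimension $0$. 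Set $m=p+1$.

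The next and main step is to build, for this $m$, an augmented $m\times(\alpha(r)+1)$-matrix $\mathcal{M}=[\mathcal{B}\,|\,\mathcal{A}]$ of subspaces of $X$ with: (i) each row covering $X$; (ii) every entry of $\mathcal{B}$ of scale-$r$-dimension $0$; (iii) every entry of $\mathcal{A}$ of scale-$s$-dimension $0$; and (iv) $B(\mathcal{A},s)\cdot_\cap B(\mathcal{A},s)^T$ diagonal, i.e. each column of $\mathcal{A}$ scale-$s$-disjoint. To this end I would apply the integral APD profile $(1,\alpha)$ of $X$ to the pair of scales $(r,\,8(m+1)s)$, obtaining a cover $X=A_0\cup B_1\cup\dots\cup B_{\alpha(r)}$ with $A_0$ of scale-$r$-dimension $0$ and each $B_j$ of scale-$8(m+1)s$-dimension $0$, and then reproduce the inductive construction in the proof of Theorem \ref{MainCharOfAsdim} but with Lemma \ref{MainDecompositionLemma} applied at the two (possibly unequal) scales $s\ge r$ and with this $m$: at step $j$ one orthogonalizes $B_j$ against the current remainder, declares the $j$-th column of $\mathcal{A}$ to be the coordinatewise intersection of that remainder with the scale-$s$-orthogonal array $\mathcal{Y}^{\bot}_j$ supplied by Lemma \ref{MainDecompositionLemma}, and replaces the remainder by the scale-$r$-dimension-$0$ array produced by clause 3 of that lemma; the column $\mathcal{B}$ is the remainder after all $\alpha(r)$ steps. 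Then (iv) is clause 1 of Lemma \ref{MainDecompositionLemma}; (ii) holds since each remainder stays of scale-$r$-dimension $0$ by clause 3; (iii) holds since the columns of $\mathcal{A}$ are cut out of the scale-$s$-dimension-$0$ arrays $\mathcal{Y}^{\bot}_j$; and (i) follows by tracking the covers $\array(A_0\cup B_1\cup\dots\cup B_j)$ through the steps. The starting scale $8(m+1)s$ is precisely what the $m$ applications of Lemma \ref{MainDecompositionLemma} consume; when $s=r$ this is literally Theorem \ref{MainCharOfBasicAPDProfile}.

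With $\mathcal{M}$ and $\mathcal{Y}$ in hand the rest is matrix bookkeeping. The columns of $\mathcal{M}^T$ are the rows of $\mathcal{M}$, each of which covers $X$, and $\mathcal{Y}$ covers $Y$, so the column-array $\mathcal{M}^T\cdot_\times\mathcal{Y}^T$, whose $j$-th entry is $\bigcup_k\mathcal{M}(k,j)\times Y_k$, represents a cover of $X\times Y$ by the covering property of $\times$-products of matrices. I would let $Z_0$ be its $0$-th entry $\bigcup_k\mathcal{B}(k)\times Y_k$ and $Z_1$ the union of its remaining $\alpha(r)$ entries, so $Z_0\cup Z_1=X\times Y$. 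Applying Lemma \ref{rDim0OfTimesProductOfArrays} (up to the isometric swap of the two factors) to the scale-$r$-disjoint, scale-$r$-dimension-$0$ array $\mathcal{Y}$ and the scale-$r$-dimension-$0$ array $\mathcal{B}$ shows $Z_0$ is of scale-$r$-dimension $0$. For each $j\ge 1$ the $j$-th column of $\mathcal{A}$ is scale-$s$-disjoint and of scale-$s$-dimension $0$ while $\mathcal{Y}$ is of scale-$s$-dimension $0$, so the same lemma, now at scale $s$, shows $\bigcup_k\mathcal{A}(k,j)\times Y_k$ is of scale-$s$-dimension $0$; hence $Z_1$, a union of $\alpha(r)$ sets of scale-$s$-dimension $0$, is of scale-$s$-dimension at most $\alpha(r)-1$. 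Everything is valid verbatim for both the $l_1$- and the supremum-$\infty$-pseudo-metrics, since Lemmas \ref{rBallsOfTimesProductOfMatrices} and \ref{rDim0OfTimesProductOfArrays} cover both.

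The hard part is the second paragraph: one has to check that the orthogonalization of Lemma \ref{MainDecompositionLemma} — deployed with a single scale in Theorem \ref{MainCharOfAsdim} — still closes up when the inner scale $s$ exceeds the outer scale $r$, that the running remainder remains of scale-$r$-dimension $0$ through all $m$ steps, and that only $\alpha(r)$ (not $\alpha(s)$) columns are needed, because the starting decomposition of $X$ already consists of $\alpha(r)$ pieces that are $0$-dimensional at the large scale $8(m+1)s$. Once this two-scale augmented matrix is available, the extraction of $Z_0$ and $Z_1$ is the mechanical computation above.
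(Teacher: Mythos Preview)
Your strategy---decompose $Y$ via Lemma~\ref{TwoDimensionLemma}, build an augmented matrix for $X$, and form $\mathcal{M}^T\cdot_\times\mathcal{Y}^T$---is exactly the paper's. The paper, however, simply invokes the single-scale matrix of Theorem~\ref{MainCharOfBasicAPDProfile} (all entries of scale-$r$-dimension~$0$, columns of $\mathcal{A}$ only scale-$r$-disjoint) and then asserts that the last $\alpha(r)$ coordinates of $\mathcal{Z}$ are of scale-$s$-dimension~$0$ ``by applying \ref{rDim0OfTimesProductOfArrays}''. Read literally, that lemma needs one of the two factor-arrays to be disjoint at the \emph{same} scale as the conclusion, and neither the $\mathcal{A}$-columns nor the $Y_k$ are known to be scale-$s$-disjoint from the single-scale data; so the paper's final sentence is, at best, a substantial elision. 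Your second paragraph supplies exactly what is missing: a two-scale version of the augmented matrix, obtained by rerunning the induction of Theorem~\ref{MainCharOfAsdim} with Lemma~\ref{MainDecompositionLemma} at the pair of scales $s\ge r$ (which that lemma explicitly permits) and seeding it with an APD decomposition of $X$ at scales $(r,\,8(m+1)s)$. This keeps the column count at $\alpha(r)$, makes the $\mathcal{A}$-columns scale-$s$-disjoint and of scale-$s$-dimension~$0$, and then Lemma~\ref{rDim0OfTimesProductOfArrays} applies cleanly at scale~$s$. So your argument is the paper's argument with that gap patched; the extraction of $Z_0,Z_1$ in your third paragraph is identical to the paper's.

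Two small remarks. First, the phrase ``being scale-$r$-disjoint and uniformly bounded, $\mathcal{Y}$ is also of scale-$r$-dimension $0$'' is not the right justification: the $Y_k$ need not be bounded. What you want is simply that each $Y_k\subset Y$ and $Y$ itself is of scale-$r$-dimension~$0$, so the scale-$r$-components of every $Y_k$ inherit the uniform bound from $Y$. Second, watch the off-by-one between your $m=p+1$ rows and the index range $0\le i\le m$ in Lemma~\ref{MainDecompositionLemma}; it is harmless (run the lemma with its parameter equal to your $m-1$), and your choice of starting scale $8(m+1)s$ is in any case large enough.
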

\begin{proof}
Put $n=\alpha(r)$.
Decompose $Y$ into subsets $Y_1,\ldots, Y_p$, each of scale-$s$-dimension $0$
such that they are mutually scale-$r$-disjoint using \ref{TwoDimensionLemma}. Pick
an augmented $p\times (n+1)$-matrix $\mathcal{M}=[\mathcal{B} |\mathcal{A}]$ of subspaces of $X$ of scale-$r$-dimension $0$
such that $\mathcal{M}\cdot_\cap \mathcal{M}^T$
has the diagonal consisting of $X$'s and
$B(\mathcal{A},r)\cdot_\cap B(\mathcal{A},r)^T$ is diagonal (see
\ref{MainCharOfBasicAPDProfile}).
Look at $\mathcal{Z}:=(\mathcal{M}^T\cdot_\times \mathcal{Y}^T)^T$.
It is an array that has $n+1$ coordinates, the first one is of
scale-$r$-dimension $0$ and the last $n$ of them are 
of scale-$s$-dimension $0$ by applying \ref{rDim0OfTimesProductOfArrays}.
\end{proof}

\begin{Theorem}
Suppose $X_1,\ldots, X_m,\ldots$ is a sequence of $\infty$-pseudo-metric spaces of finite asymptotic dimension.
If $X_i$ is $2i$-discrete for each $i\ge 1$, then 
$(1,\alpha(k))$ is an integral APD profile of $\prod\limits_{i=1}^\infty X_i$, where $\alpha(k)$ is the asymptotic dimension of
$\prod\limits_{i=1}^k X_i$, in either $l_1$-$\infty$-pseudo-metric or the supremum-$\infty$-pseudo-metric.
\end{Theorem}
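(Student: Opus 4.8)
The plan is to combine Proposition \ref{MainAPDProposition} with an induction on the number of factors, using the hypothesis that $X_i$ is $2i$-discrete to control the scales that come up. First I would set $P_k=\prod_{i=1}^k X_i$ with $\alpha(k)=\asdim(P_k)$, and observe that $\alpha$ is non-decreasing (a product of more factors has asymptotic dimension at least that of a subproduct, once we note the inclusion $P_k\hookrightarrow P_{k+1}$ given by a basepoint is a coarse embedding; alternatively just replace $\alpha$ by its running maximum, which changes nothing about being an APD profile). The content is condition 3 of the definition of integral APD profile with $k=1$: for a non-decreasing pair $r_0\le r_1$ of natural numbers I must decompose $P:=\prod_{i=1}^\infty X_i$ as $Z_0\cup Z_1$ with $Z_0$ of scale-$r_0$-dimension $0$ and $Z_1$ of scale-$r_1$-dimension at most $\alpha(r_0)-1$.

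The key step is choosing where to split the infinite product. Write $P=P_N\times T_N$ where $T_N=\prod_{i>N}X_i$, and choose $N$ large enough that $2(N+1)>r_1$, so that every factor in the tail $T_N$ is $2i$-discrete with $2i\ge 2(N+1)>r_1\ge r_0$; hence $T_N$ itself is $r_1$-discrete, and in particular $T_N$ is of scale-$r_1$-dimension $0$ (every scale-$r_1$-component is a single point) and a fortiori of scale-$r_0$-dimension $0$. Meanwhile $P_N$, being a finite product of spaces of finite asymptotic dimension, has finite asymptotic dimension, and by the Example following the APD-profile definition it has integral APD profile $(1,\asdim(P_N))=(1,\alpha(N))$. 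Also I should record that $P_N$ is of finite scale-$s$-dimension for every $s>0$: this is immediate since finite asymptotic dimension implies finite scale-$s$-dimension for each fixed $s$ (from the Ostrand-type definition, specializing $r=s$). Now I want to apply Proposition \ref{MainAPDProposition} with the roles $X\rightsquigarrow P_N$ and $Y\rightsquigarrow T_N$, taking the parameter ``$r$'' there to be $r_0$ and ``$s$'' there to be $r_1$: the hypotheses are that $P_N$ has integral APD profile $(1,\alpha(N))$, that $T_N$ has finite scale-$s$-dimension for each $s$ (again clear: $r_1$-discrete, hence every scale-$s$-component has diameter $<$ something controlled, in fact $T_N$ is even of scale-$s$-dimension $0$ for $s\le$ the discreteness constant and has finite scale-$s$-dimension always since it is uniformly discrete), and that $T_N$ is of scale-$r_0$-dimension $0$, which we have. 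The proposition then yields a decomposition $P_N\times T_N=Z_0\cup Z_1$ with $Z_0$ of scale-$r_0$-dimension $0$ and $Z_1$ of scale-$r_1$-dimension at most $\alpha(N)(r_0)-1=\alpha(N)-1$.

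There is one mismatch to repair: the proposition gives a bound $\alpha(N)-1$, but I need $\alpha(r_0)-1$, and $N$ was chosen large (roughly $N\approx r_1/2$), so $\alpha(N)$ could exceed $\alpha(r_0)$. The hard part is therefore not the decomposition itself but arranging the dimension bound to depend on $r_0$ rather than on the cut-off $N$. I expect to handle this by choosing $N$ as small as possible: take $N$ to be the least integer with $2(N+1)>r_1$, equivalently $N=\lceil r_1/2\rceil$, and then use that $\asdim(P_N)$ is being recorded by the profile function $\alpha$ evaluated at $\ge r_0$ is not yet enough — so instead I think the clean fix is to split the tail one factor earlier and absorb: cut at $N$ with $2(N+1)>r_0$ (not $r_1$), i.e. $N=\lceil r_0/2\rceil$, so the tail $T_N$ is $r_0$-discrete, hence of scale-$r_0$-dimension $0$; then apply Proposition \ref{MainAPDProposition} to $P_N\times T_N$ with its ``$r$''$=r_0$ and ``$s$''$=r_1$, getting $Z_1$ of scale-$r_1$-dimension at most $\asdim(P_N)(r_0)-1$, and since $\asdim(P_N)$ is a constant function this is $\asdim(P_N)-1\le \alpha(N)-1\le\alpha(r_0)-1$ provided we know $N\le r_0$, which holds for $r_0\ge 2$ and is a triviality to dispatch for $r_0=1$ separately (or simply by noting $\alpha$ is non-decreasing and $N=\lceil r_0/2\rceil\le r_0$). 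That inequality $\alpha(N)\le\alpha(r_0)$ with $N=\lceil r_0/2\rceil\le r_0$ uses monotonicity of $\alpha$, which is exactly why we arranged $\alpha$ non-decreasing at the outset. This completes the verification that $(1,\alpha)$ satisfies condition 3 for the pair $(r_0,r_1)$, hence it is an integral APD profile of $P$; the $l_1$ versus supremum metric issue is carried along for free since every cited lemma and proposition was proved in both.
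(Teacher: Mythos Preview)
Your approach via Proposition \ref{MainAPDProposition} is the same as the paper's, but there is a real gap in the ``fix''. With the cut at $N=\lceil r_0/2\rceil$, the tail $T_N=\prod_{i>N}X_i$ is only $2(N+1)$-discrete, i.e.\ roughly $r_0$-discrete, \emph{not} $r_1$-discrete as your parenthetical asserts; and ``uniformly discrete'' by itself does not imply finite scale-$s$-dimension for large $s$ (a $1$-discrete space can have infinite asymptotic dimension). So the hypothesis of the Proposition that $Y=T_N$ have finite scale-$r_1$-dimension is left unjustified. Your first attempt had the opposite defect: cutting at $N\approx r_1$ makes the tail $r_1$-discrete, but then the bound comes out as $\asdim(P_N)-1$, which depends on $r_1$ rather than $r_0$.

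The missing fact is true, and the repair is to factor the tail once more: write $T_N=\bigl(\prod_{N<i\le M}X_i\bigr)\times\bigl(\prod_{i>M}X_i\bigr)$ with $2(M+1)>r_1$. The first factor is a finite product of spaces of finite asymptotic dimension, hence of finite scale-$r_1$-dimension; the second is scale-$r_1$-dimension $0$. This is precisely how the paper argues from the outset: with $k=r_0$ and $s=r_1$ it applies the Proposition to $X=\prod_{i\le k}X_i$ and the \emph{finite} middle piece $Y=\prod_{k<i\le s}X_i$ (for which finite scale-$t$-dimension for all $t$ is immediate), obtaining the decomposition of $\prod_{i\le s}X_i$, and only then crosses with the far tail $\prod_{i>s}X_i$, which is $2(s+1)$-discrete and hence preserves both the scale-$k$- and the scale-$s$-dimension bounds. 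In short, the three-way split the paper uses is exactly what resolves both of your attempts simultaneously.
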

\begin{proof}
Suppose $k\ge 1$ and $s > 0$ be an integer. Let $Y:= \prod\limits_{i=k+1}^s X_i$. By
\ref{MainAPDProposition}, $\prod\limits_{i=1}^s X_i$ has a decomposition
into two subsets: $Z_0$ of scale-$k$-dimension $0$,
and $Z_1$ of scale-$s$-dimension at most $\alpha(k)-1$.
Taking cartesian products of these sets with one-point sets
in $\prod\limits_{i=s+1}^\infty X_i$ gives a desired decomposition of
$\prod\limits_{i=1}^\infty X_i$.
\end{proof}

\begin{Remark}
The above theorem not only generalizes Yamauchi's result \cite{Yam} that the infinite direct sum of integers has Asymptotic Property C, it actually says that it has a linear APD profile.
\end{Remark}

\begin{Theorem}\label{ProductOfSpacesOfBasicAPD}
Suppose $X$ and $Y$ are $\infty$-pseudo-metric spaces. If $(1,\alpha)$ is an APD profile of $X$ and $(1,\beta)$ is an APD profile of of $Y$,
then $(2,\alpha\cdot\beta+\alpha+\beta)$ is an APD profile of of $X\times Y$.
\end{Theorem}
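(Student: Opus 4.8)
The plan is to split $X$ via its profile into one scale-$t_0$-dimension-$0$ piece and finitely many scale-$t_1$-dimension-$0$ pieces, multiply each by $Y$, apply Proposition~\ref{MainAPDProposition}, and then consolidate the scale-$t_0$ leftovers. By the proposition identifying APD profiles with integral ones we may assume $(1,\alpha)$ and $(1,\beta)$ are integral, and it suffices to treat a non-decreasing pair $(t_0,t_1)$ of positive integers; write $a=\alpha(t_0)$ and $b=\beta(t_0)$. We must produce $X\times Y=W_0\cup W_1$ with $W_0$ of scale-$t_0$-dimension at most $1$ and $W_1$ of scale-$t_1$-dimension at most $ab+a+b-1$.

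First I would apply $(1,\alpha)$ at the scales $(t_0,t_1)$ to write $X=X_0\cup X_1^{1}\cup\cdots\cup X_1^{a}$ with $X_0$ of scale-$t_0$-dimension $0$ and each $X_1^{i}$ of scale-$t_1$-dimension $0$; since scale-$r$-components only shrink as $r$ decreases, each $X_1^{i}$ is then also of scale-$t_0$-dimension $0$, and, being a subspace of $X$, every piece has finite scale-$s$-dimension for all $s>0$. Thus $X\times Y=(X_0\times Y)\cup\bigcup_{i=1}^{a}(X_1^{i}\times Y)$, and, each of $X_0,X_1^{1},\dots,X_1^{a}$ being of scale-$t_0$-dimension $0$ while $Y$ carries the profile $(1,\beta)$, Proposition~\ref{MainAPDProposition} (applied with $Y$ in the role of the profiled factor, $r=t_0$, $s=t_1$, and coordinates swapped) splits $X_0\times Y$ as $C_0\cup C_1$ and each $X_1^{i}\times Y$ as $D_0^{i}\cup D_1^{i}$, where $C_0$ and the $D_0^{i}$ are of scale-$t_0$-dimension $0$ and $C_1$ and the $D_1^{i}$ are of scale-$t_1$-dimension at most $b-1$. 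Taking $W_1=C_1\cup\bigcup_i D_1^{i}$ already gives scale-$t_1$-dimension at most $b+ab-1$, comfortably inside the budget $ab+a+b-1$.

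It remains to see that $C_0\cup\bigcup_{i=1}^{a}D_0^{i}$ has scale-$t_0$-dimension at most $1$, that is, is a union of two sets of scale-$t_0$-dimension $0$; naively it is only a union of $a+1$ such sets, and this is where the content lies. The idea is to keep the matrix description of the output of Proposition~\ref{MainAPDProposition}: its proof realizes the scale-$t_0$-dimension-$0$ part as a $\cdot_\times$-product of the first column of a single augmented matrix of $Y$ at scale $t_0$ with a scale-$t_0$-\emph{disjoint} array refining the other factor, the disjointness being exactly what Lemma~\ref{TwoDimensionLemma} provides for a scale-$t_0$-dimension-$0$ space. Running all $a+1$ applications against one common index set and one scale-$t_0$-disjoint array refining $X_0,X_1^{1},\dots,X_1^{a}$ simultaneously, one presents $C_0\cup\bigcup_i D_0^{i}$ as a single $\cdot_\times$-product against a scale-$t_0$-disjoint array, hence of scale-$t_0$-dimension $0$ by Lemma~\ref{rBallsOfTimesProductOfMatrices} together with the argument of Lemma~\ref{rDim0OfTimesProductOfArrays}; should this consolidation spill over into a second scale-$t_0$-dimension-$0$ set or a few extra scale-$t_1$-dimension-$0$ sets, the slack in the budgets above absorbs it. The hard part will be precisely making the several scale-$t_0$-disjoint refinements of the $X_1^{i}$ mutually compatible — the $X_1^{i}$ themselves not being scale-disjoint — so that the union of the $D_0^{i}$ is genuinely one $\cdot_\times$-product with a scale-$t_0$-disjoint array and not merely a union of $a$ of them; this is where the matrix algebra of Section~2 is essential, and once it is arranged the estimates above finish the proof.
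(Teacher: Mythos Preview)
Your setup is correct through the point where you obtain $C_0,D_0^1,\ldots,D_0^a$ of scale-$t_0$-dimension $0$ and $C_1,D_1^1,\ldots,D_1^a$ contributing $b+ab$ sets of scale-$t_1$-dimension $0$. The gap is exactly where you flag it: you must reduce the $a+1$ scale-$t_0$-dimension-$0$ pieces to two, and the mechanism you sketch does not accomplish this. The proof of Proposition~\ref{MainAPDProposition} feeds a \emph{single} scale-$t_0$-disjoint array (coming from Lemma~\ref{TwoDimensionLemma}) into the augmented matrix of the profiled factor; running it $a+1$ times produces $a+1$ separate scale-$t_0$-disjoint arrays refining $X_0,X_1^1,\ldots,X_1^a$, and there is no reason their union is scale-$t_0$-disjoint, since the $X_1^i$ themselves are not mutually scale-$t_0$-disjoint. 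Nothing in the matrix algebra of Section~2 manufactures that disjointness after the fact. Your fallback of dumping the excess into the scale-$t_1$ budget also fails: scale-$t_0$-dimension $0$ does \emph{not} imply scale-$t_1$-dimension $0$ (the implication goes the other way), so the $D_0^i$ cannot simply be moved across.

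The paper's proof sidesteps this difficulty by decomposing \emph{both} factors symmetrically. Write $X=X_0\cup X_1$ and $Y=Y_0\cup Y_1$ with $X_0,Y_0$ of scale-$r$-dimension $0$ and $X_1,Y_1$ of scale-$s$-dimension at most $\alpha(r)-1,\ \beta(r)-1$. Then $X\times Y$ is covered by $X\times Y_0$, $X_0\times Y$, and $X_1\times Y_1$. Proposition~\ref{MainAPDProposition} is invoked only twice, once for $X\times Y_0$ (yielding one scale-$r$-dimension-$0$ set and $\alpha(r)$ scale-$s$-dimension-$0$ sets) and once for $X_0\times Y$ (one plus $\beta(r)$), while $X_1\times Y_1$ is handled directly as a product of two sets of bounded scale-$s$-dimension, contributing $\alpha(r)\beta(r)$ sets. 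This gives exactly two scale-$r$-dimension-$0$ sets and $\alpha(r)\beta(r)+\alpha(r)+\beta(r)$ scale-$s$-dimension-$0$ sets, with no consolidation step needed. The asymmetric route you chose---splitting only $X$ and pushing everything through $Y$'s profile---forces the merging problem you could not close; the symmetric split is what makes the count come out.
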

\begin{proof}
Suppose $s\ge r > 0$.
Express $X$ as $X_0\cup X_1$, where $X_0$ is of scale-$r$-dimension $0$ and $X_1$ is of scale-$s$-dimension at most
$\alpha(r)-1$.
Express $Y$ as $Y_0\cup Y_1$, where $Y_0$ is of scale-$r$-dimension $0$ and $Y_1$ is of scale-$s$-dimension at most
$\beta(r)-1$. Notice $X_1\times Y_1$ is of scale-$s$-dimension
at most $\alpha(r)\cdot \beta(r)-1$.
Using \ref{MainAPDProposition} we can express
$X\times Y_0$ as a union of one set that is scale-$r$-dimension
$0$ and $\alpha(r)$ sets of scale-$s$-dimension $0$.
A similar observation applies to $X_0\times Y$,
so altogheter we can express $X\times Y$ as a union of $2$ sets
of scale-$r$-dimension $0$
and $\alpha(r)\cdot\beta(r)+\alpha(r)+\beta(r)$ sets
of scale-$s$-dimension $0$.
\end{proof}

A similar proof to that of \ref{ProductOfSpacesOfBasicAPD} gives the following:
\begin{Theorem}
Suppose $X$ and $Y$ are $\infty$-pseudo-metric space.
If both $X$ and $Y$ have Asymptotic Property D, then so does
$X\times Y$ in either $l_1$-$\infty$-pseudo-metric or the supremum-$\infty$-pseudo-metric.
\end{Theorem}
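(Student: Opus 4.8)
The plan is to run the argument of \ref{ProductOfSpacesOfBasicAPD} with profiles of arbitrary length and feed the output into the characterization of Asymptotic Property D by functions of several variables. First I would normalize: by the passage from $(\alpha_0,\ldots,\alpha_k)$ to $(1,\alpha_0-1,\ldots,\alpha_k)$ and by appending trailing constant functions — legitimate because the empty set has scale-$r$-dimension at most $c-1$ for every $c\ge1$ — I may assume $X$ has APD profile $(1,\alpha_1,\ldots,\alpha_m)$ and $Y$ has APD profile $(1,\beta_1,\ldots,\beta_m)$ with the same $m$. I also record that a space with an APD profile has finite scale-$t$-dimension for every $t>0$ (take all scales equal to $t$), a property inherited by all subspaces; this is what makes subspaces of $X$ and $Y$ admissible inputs to \ref{MainAPDProposition} and \ref{TwoDimensionLemma}.

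The technical core is an extension of \ref{MainAPDProposition} from length-$2$ profiles to arbitrary ones: if $A$ has APD profile $(1,\delta_1,\ldots,\delta_q)$ and $B$ has scale-$\rho$-dimension $0$ and finite scale-$t$-dimension for every $t$, then, in either product $\infty$-pseudo-metric, for every non-decreasing $\rho=\sigma_0\le\sigma_1\le\ldots\le\sigma_q$ the product $A\times B$ decomposes as $Z_0\cup Z_1\cup\ldots\cup Z_q$ with $Z_0$ of scale-$\sigma_0$-dimension $0$ and $Z_h$ of scale-$\sigma_h$-dimension at most $\delta_h(\sigma_{h-1})-1$. I would obtain this exactly as \ref{MainAPDProposition} was obtained from \ref{MainCharOfBasicAPDProfile}: iterating the parallel–perpendicular decomposition \ref{MainDecompositionLemma} (as in the proof of \ref{MainCharOfAsdim}) produces, for the scales $\sigma_0,\ldots,\sigma_q$, a tower of scale-$\rho$-orthogonal matrices of subspaces of $A$ of scale-$\rho$-dimension $0$ that together cover $A$; spreading $B$ with \ref{TwoDimensionLemma} into a scale-$\rho$-disjoint array of scale-$\sigma_q$-dimension-$0$ pieces and forming $\times$-dot-products of the matrices with that array — controlled by \ref{rBallsOfTimesProductOfMatrices} and \ref{rDim0OfTimesProductOfArrays} — yields the desired $Z_0,\ldots,Z_q$.

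With the extension in hand I would finish as in \ref{ProductOfSpacesOfBasicAPD}. For a non-decreasing list $r_0\le r_1\le\ldots\le r_{2m}$ of scales, decompose $Y$ by its profile at $r_0,\ldots,r_m$; write each resulting piece $Y_j$ as a union of $\beta_j(r_{j-1})$ sets $Y_j^v$ of scale-$r_j$-dimension $0$; apply the extension to each $X\times Y_j^v$ with $\rho=r_j$ and with $r_{m+1}\le\ldots\le r_{2m}$ as the coarser scales, and regroup using \ref{rDimZeroOfUnionOfSets}. The scale-$r_j$-dimension-$0$ parts of these decompositions recombine, for each fixed $j$, into a set of scale-$r_j$-dimension at most $\beta_j(r_{j-1})-1$, and the part of each decomposition living at a coarse scale $r_{m+h}$ recombines (over $j$ and $v$) into a set of scale-$r_{m+h}$-dimension bounded by $\alpha_h$ evaluated at the appropriate earlier scale times $\sum_j\beta_j(r_{j-1})$. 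Thus for every non-decreasing list of scales $X\times Y$ decomposes into finitely many pieces, the $i$-th of scale-$r_i$-dimension at most $\gamma_i(r_0,\ldots,r_{i-1})-1$, with $\gamma_0\equiv1$; by the several-variable characterization of Asymptotic Property D this is an APD profile of $X\times Y$.

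I expect the main obstacle to be the extension of \ref{MainAPDProposition}: everything hinges on controlling, at coarse scales, products of a factor that is $0$-dimensional only at a fine scale with a factor drawn from $A$, and this fine-scale control of coarse-scale dimension is precisely what the matrix machinery behind \ref{MainAPDProposition} (and \ref{MainDecompositionLemma}) supplies and what a naive pass to $0$-dimensional pieces followed by taking products cannot. Once that lemma is available, the remainder is bounded bookkeeping over the at most $(m+1)^2$ product blocks, in the spirit of \ref{ProductOfSpacesOfBasicAPD}.
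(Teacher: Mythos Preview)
Your proposal is correct and is precisely the fleshing-out the paper leaves to the reader: the paper gives no proof beyond the sentence ``A similar proof to that of \ref{ProductOfSpacesOfBasicAPD} gives the following,'' and your outline is exactly that similar proof. You correctly isolate the one genuinely new ingredient --- the extension of \ref{MainAPDProposition} from length-two profiles $(1,\alpha)$ to arbitrary profiles $(1,\delta_1,\ldots,\delta_q)$ --- and you are right that this is where the work lies and that the matrix machinery of \ref{MainDecompositionLemma}/\ref{MainCharOfBasicAPDProfile} is what makes it go (a naive ``pass to $0$-dimensional pieces and take products'' fails for the reason you name). Two minor remarks: the appeal to \ref{rDimZeroOfUnionOfSets} in the regrouping step is unnecessary, since a union of $N$ sets of scale-$t$-dimension $0$ tautologically has scale-$t$-dimension at most $N-1$; and in your final bound at scale $r_{m+1}$ the ``appropriate earlier scale'' for $\alpha_1$ is $r_j$, which varies with $j$, so you should take $\alpha_1(r_m)$ (using monotonicity) before invoking the several-variable characterization.
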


\section{Applications}
In this section we apply our results to generalize previously known theorems for asymptotic dimension or Asymptotic Property C.

\begin{Theorem}
Suppose $G$ is a countable group equipped with a proper left-invariant metric.
If there is an APD profile $(\alpha_0,\ldots,\alpha_k)$ common to all finitely generated subgroups of $G$, then $(\alpha_0,\ldots,\alpha_k)$ is an APD profile of $G$.
\end{Theorem}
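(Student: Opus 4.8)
The plan is to exploit the fact that a proper left-invariant metric on a countable group $G$ is exhausted, in the large-scale sense, by its finitely generated subgroups: for every $r>0$ the ball $B(e,r)$ is finite, hence contained in a finitely generated subgroup $H_r\le G$, and $G$ is a disjoint union (at scale $r$) of left cosets of $H_r$, each isometric to $H_r$. First I would fix a non-decreasing array $(r_0,\ldots,r_k)$ of positive reals and set $H:=H_{r_k}$, the finitely generated subgroup containing the ball of radius $r_k$ around $e$. Then $G=\bigsqcup_{gH\in G/H} gH$ where distinct cosets are at distance greater than $r_k\ge r_i$ for every $i$; each coset $gH$, with the restricted metric, is isometric to $H$.

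Next I would apply the hypothesis: since $(\alpha_0,\ldots,\alpha_k)$ is an APD profile common to all finitely generated subgroups, it is in particular an APD profile of $H$. Using the array $(r_0,\ldots,r_k)$, decompose $H=H_0\cup\cdots\cup H_k$ with each $H_i$ of scale-$r_i$-dimension at most $\alpha_i(r_{i-1})-1$; transport this decomposition to each coset $gH$ via the isometry, obtaining $gH=(gH)_0\cup\cdots\cup(gH)_k$ with $(gH)_i$ of scale-$r_i$-dimension at most $\alpha_i(r_{i-1})-1$. Now define $X_i:=\bigcup_{gH\in G/H}(gH)_i$ for $0\le i\le k$. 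This gives $G=X_0\cup\cdots\cup X_k$, and it remains to check that each $X_i$ has scale-$r_i$-dimension at most $\alpha_i(r_{i-1})-1$.

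The key point is that passing to the disjoint union over cosets does not increase scale-$r_i$-dimension. Since the cosets are pairwise at distance greater than $r_i$, any scale-$r_i$-chain in $X_i$ stays inside a single coset $gH$; hence each scale-$r_i$-component of $X_i$ equals a scale-$r_i$-component of some $(gH)_i$. Writing $(gH)_i$ as a union of $\alpha_i(r_{i-1})$ pieces each of scale-$r_i$-dimension $0$ (with a common bound $M$ on the metric-norm of scale-$r_i$-components arrays, available because all cosets are isometric to the single space $H$), and taking the union over $g$ of the $j$-th piece, we obtain a decomposition of $X_i$ into $\alpha_i(r_{i-1})$ subsets, each with scale-$r_i$-components array of metric-norm at most $M$. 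Thus $X_i$ has scale-$r_i$-dimension at most $\alpha_i(r_{i-1})-1$, which is exactly what is needed.

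The main obstacle I anticipate is purely bookkeeping: making sure the bound $M$ on diameters of scale-$r_i$-components is uniform across the infinitely many cosets. This is handled by the observation that every coset is isometric to the fixed finitely generated subgroup $H$, so a single decomposition of $H$ (and hence a single value of $M$ for each $i$) works simultaneously for all of them; there is no need to choose decompositions coset-by-coset. Everything else is the routine ``disjoint-union'' argument of the kind already used in the proof of the Asymptotic Property C implication earlier in the paper.
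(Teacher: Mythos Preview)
Your approach is essentially identical to the paper's: both pick a finitely generated subgroup $H$ containing a suitable ball around the identity, apply the common APD profile to $H$ to get $H=C_0\cup\cdots\cup C_k$, and then transport this decomposition to every left coset via the left-invariant isometries $h\mapsto g_s h$, setting $X_i=\bigcup_s g_s C_i$.

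One small correction is needed. In this paper, \emph{scale-$r$-disjoint} means $B(C,r)\cap B(D,r)=\emptyset$, so consecutive points of a scale-$r_i$-chain may lie at distance up to $2r_i$. Hence ``cosets at distance greater than $r_k$'' is not sufficient to confine a scale-$r_k$-chain to a single coset; you need distance at least $2r_k$. Take $H$ to be the subgroup generated by $B(e,2r_k)$ (the paper uses $B(1_G,2M+2)$ with $M\ge r_k$), and then your coset argument goes through verbatim: if $d(g_s h_1,g_t h_2)<2r_k$ then $h_1^{-1}g_s^{-1}g_t h_2\in B(e,2r_k)\subset H$, forcing $g_sH=g_tH$. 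With this adjustment your proof is correct and matches the paper's.
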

\begin{proof}
Given a non-decreasing array $(r_0,\ldots,r_{k})$ of positive real numbers
choose a natural number $M\ge r_{k}$ and let
$H$ be the subgroup of $G$ generated by $B(1_G,2M+2)$.
Choose a decomposition of $H$ as the union of
its subsets $C_0,\ldots, C_k$ such that each $C_i$, $0\leq i\leq k$,
has scale-$r_{i}$-dimension at most $\alpha_i(r_{i-1})-1$.
Now, choose representatives $\{g_s\}_{s\in S}$ of cosets $g\cdot H$. That way each $g\in G$ has a unique decomposition as
$g=g_s\cdot h$, $h\in H$. Also, if $s\ne t$,
then sets $g_t\cdot C_i$ and $g_s\cdot C_i$ are scale-$M$-disjoint.
Indeed, $d(g_th_1,g_sh_2) < 2M+2$ implies
$d(h_2^{-1}g_s^{-1}g_th_1) < 2M+2$ and 
$ h_2^{-1}g_s^{-1}g_th_1\in H$ resulting
in $g_s^{-1}g_t\in H$, a contradiction.

Finally, that means sets $D_i:=\bigcup\limits_{s\in S}g_s\cdot C_i$, $0\leq i\leq k$, form a decomposition of $G$
and are of scale-$r_{i}$-dimension at most $\alpha_i(r_{i-1})-1$.
\end{proof}

\begin{Corollary}[J.Smith \cite{Smith}]
Suppose $G$ is a countable group equipped with a proper left-invariant metric and $n\ge 0$. If all finitely generated subgroups of $G$ are of asymptotic dimension at most $n$, then $\asdim(G)\leq n$.
\end{Corollary}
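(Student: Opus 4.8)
The plan is to derive this at once from the preceding Theorem together with the Example identifying ``$\asdim(X)\le n$'' with ``$(1,n)$ is an APD profile of $X$''. The point is that the hypothesis on finitely generated subgroups is precisely the statement that they all share the APD profile $(1,n)$, so the preceding Theorem transports that profile to $G$.

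First I would observe that every finitely generated subgroup $H$ of $G$, equipped with the metric inherited from $G$, is again an $\infty$-pseudo-metric space (in fact a proper left-invariant metric space), so the Example characterizing asymptotic dimension at most $n$ applies to it: the hypothesis $\asdim(H)\le n$ is equivalent to $(1,n)$ being an APD profile of $H$. Since $n$ does not depend on $H$, this exhibits $(1,n)$ as an APD profile common to all finitely generated subgroups of $G$. Here $(1,n)$ is the array $(\alpha_0,\alpha_1)$ with $\alpha_0\equiv 1$ and $\alpha_1\equiv n$, both constant and hence non-decreasing, so it is a legitimate APD profile in the sense of the Definition.

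Next I would apply the preceding Theorem with $k=1$ and $(\alpha_0,\alpha_1)=(1,n)$: the conclusion is that $(1,n)$ is an APD profile of $G$ itself. Applying the Example once more, now in the reverse direction, yields $\asdim(G)\le n$, which is the assertion.

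I do not expect any genuine obstacle; the entire content has been absorbed into the preceding Theorem. The only care needed is bookkeeping: confirming that restricting a proper left-invariant metric to a subgroup keeps us within the class of spaces to which the Example and Theorem apply (it does, trivially), and noting that a constant array of functions is covered by the definition of APD profile. With these remarks the proof is two invocations of the Example sandwiching one invocation of the Theorem.
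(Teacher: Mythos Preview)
Your proposal is correct and is exactly the argument the paper has in mind: the Corollary is stated without proof immediately after the Theorem, and the intended derivation is precisely to combine that Theorem with the Example identifying $\asdim(X)\le n$ with the APD profile $(1,n)$.
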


\begin{Question}
Suppose $G$ is a countable group equipped with a proper left-invariant metric. If all finitely generated subgroups of $G$ are of finite asymptotic dimension, then does $G$ have Asymptotic Property D or Asymptotic Property C?
\end{Question}

\subsection{Asymptotic products}
In \cite{DydLogan} the concept of asymptotic products was introduced that seems to be dual to asymptotic cones of M.Gromov
(see \cite{GroNilp}, \cite{Dries}, and \cite{Kap}).

\begin{Definition}
Suppose $D$ is an infinite countable set. A function
$\alpha:D\to (0,\infty)$ \textbf{ has limit infinity at infinity} if
for each $M > 0$ there is a finite subset $E$ of $D$ such that
$\alpha(d) > M$ for all $d\in D\setminus E$.
\end{Definition}

\begin{Definition}
Suppose $D$ is an infinite countable set and a function
$\alpha:D\to (0,\infty)$ has limit infinity at infinity. Given $\infty$-pseudo-metric spaces $(X_d,\rho_d)$, $d\in D$, the \textbf{asymptotic product}
$(\prod\limits_{d\in D} X_d,\rho_\alpha)$ is
 the cartesian product $\prod\limits_{d\in D} X_d$
equipped with the $\infty$-pseudo-metric $\rho_\alpha$ defined as follows:\\
Given $u,v\in \prod\limits_{d\in D} X_d$, $\rho_\alpha(u,v)$
is the sum $\sum\limits_{d\in D} r_d$, where $r_d$
is equal to $\alpha(d)$ if $0 \leq \rho_d(u(d),v(d))\leq \alpha(d)$
and $u(d)\ne v(d)$,
$r_d=\rho_d(u(d),v(d))$ if $\rho_d(u(d),v(d)) > \alpha(d)$,
and $r_d=0$ if $u(d)=v(d))$.
\end{Definition}

\begin{Corollary}
Given two asymptotic products 
$(\prod\limits_{d\in D} X_d,\rho_\alpha)$ and $(\prod\limits_{d\in D} X_d,\rho_\beta)$
of $\infty$-pseudo-metric spaces $(X_d,\rho_d)$, $d\in D$, the identity function between them is a coarse equivalence.
\end{Corollary}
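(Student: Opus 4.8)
The plan is to verify that the identity map $f$ is bornologous from $(\prod\limits_{d\in D}X_d,\rho_\alpha)$ to $(\prod\limits_{d\in D}X_d,\rho_\beta)$; by the symmetric roles of $\alpha$ and $\beta$ the same argument gives bornologousness in the reverse direction, and since $f$ is a bijection that is its own inverse, these two facts together say precisely that $f$ is a coarse equivalence (each direction is then automatically a coarse map, and the compositions are literally the identities).

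First I would record an elementary observation about the individual summands. Fix $u,v\in\prod\limits_{d\in D}X_d$, let $\gamma$ stand for either $\alpha$ or $\beta$, and write $r_d^\gamma$ for the $d$-th term in the defining sum of $\rho_\gamma(u,v)$. If $u(d)=v(d)$ then $r_d^\gamma=0$, while if $u(d)\ne v(d)$ then a glance at the two remaining cases of the definition shows $r_d^\gamma=\max\bigl(\gamma(d),\rho_d(u(d),v(d))\bigr)$; in particular
$$\gamma(d)\ \le\ r_d^\gamma\ \le\ \gamma(d)+\rho_d(u(d),v(d))\qquad\text{and}\qquad \rho_d(u(d),v(d))\ \le\ r_d^\gamma .$$

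Now suppose $\rho_\alpha(u,v)\le R$ for a given real $R<\infty$, and put $F=\{d\in D:u(d)\ne v(d)\}$. For every $d\in F$ the lower bounds give $\alpha(d)\le r_d^\alpha\le R$ and $\rho_d(u(d),v(d))\le r_d^\alpha\le R$, so $F\subseteq E_R:=\{d\in D:\alpha(d)\le R\}$. This is the only step that invokes the standing hypothesis: because $\alpha$ has limit infinity at infinity, $E_R$ is a \emph{finite} set, and — what really matters — it depends only on $R$ (and on $\alpha$), not on $u$ or $v$. Using the upper bound for $\rho_\beta$ and then enlarging $F$ to $E_R$,
$$\rho_\beta(u,v)=\sum\limits_{d\in F}r_d^\beta\ \le\ \sum\limits_{d\in F}\bigl(\beta(d)+\rho_d(u(d),v(d))\bigr)\ \le\ \sum\limits_{d\in E_R}\bigl(\beta(d)+R\bigr)=:S_R<\infty .$$
Hence $\rho_\alpha(u,v)\le R$ forces $\rho_\beta(u,v)\le S_R$ with $S_R$ independent of $u$ and $v$, which is exactly bornologousness of $f$; interchanging $\alpha$ and $\beta$ finishes the proof.

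I do not expect a genuine obstacle here: the computation is just the case analysis of the three alternatives in the definition of $\rho_\gamma$. The only point needing a moment's care is the uniform finiteness of the index set $E_R$, and this is precisely what the hypothesis ``$\alpha$ (equivalently $\beta$) has limit infinity at infinity'' is there to supply — it is what makes the control function $R\mapsto S_R$ well defined; if one dropped that hypothesis the identity would in general fail to be bornologous.
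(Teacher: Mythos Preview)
Your argument is correct. The paper states this corollary without proof, regarding it as an immediate consequence of the definition; your write-up supplies exactly the details one would fill in, via the natural observation that $r_d^\gamma=\max(\gamma(d),\rho_d(u(d),v(d)))$ whenever $u(d)\ne v(d)$ and the key use of ``limit infinity at infinity'' to confine the support $F$ to a finite set $E_R$ depending only on $R$.
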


\begin{Corollary}\label{AsymptoticProductOfAsdimSpaces}
Asymptotic products 
$(\prod\limits_{d\in D} X_d,\rho_\alpha)$ of $\infty$-pseudo-metric spaces
of finite asymptotic dimension have Asymptotic Property C.
\end{Corollary}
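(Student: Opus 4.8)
The plan is to recognize an asymptotic product as an $l_1$-type direct sum of ``discretized'' coordinate spaces and then invoke the theorem on infinite products of $2i$-discrete spaces of finite asymptotic dimension, exploiting the fact that the coarse type of an asymptotic product does not depend on the weight function.

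First I would rewrite $\rho_\alpha$ coordinatewise. For $d\in D$ and a weight $\beta\colon D\to(0,\infty)$ still to be chosen, define an $\infty$-pseudo-metric $\rho_d^\beta$ on $X_d$ by $\rho_d^\beta(x,y)=0$ if $x=y$ and $\rho_d^\beta(x,y)=\max\bigl(\rho_d(x,y),\beta(d)\bigr)$ otherwise. Unwinding the definition of $\rho_\beta$ shows that $\rho_\beta(u,v)=\sum_{d\in D}\rho_d^\beta(u(d),v(d))$, i.e. $(\prod_{d\in D}X_d,\rho_\beta)$ is exactly the $l_1$-$\infty$-pseudo-metric product of the spaces $(X_d,\rho_d^\beta)$. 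Since $\rho_d\le\rho_d^\beta\le\rho_d+\beta(d)$, the identity map $(X_d,\rho_d)\to(X_d,\rho_d^\beta)$ is a coarse equivalence, so each $(X_d,\rho_d^\beta)$ again has finite asymptotic dimension; and by construction distinct points of $(X_d,\rho_d^\beta)$ are at distance at least $\beta(d)$.

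Next, because $D$ is infinite and countable I would enumerate $D=\{d_1,d_2,\dots\}$ and fix the particular weight $\beta$ given by $\beta(d_i)=2i$. This $\beta$ has limit infinity at infinity, so $(\prod_{d\in D}X_d,\rho_\beta)$ is an asymptotic product of the same family, and by the corollary comparing two asymptotic products the identity map $(\prod_{d\in D}X_d,\rho_\alpha)\to(\prod_{d\in D}X_d,\rho_\beta)$ is a coarse equivalence. Now $(\prod_{d\in D}X_d,\rho_\beta)$ is the $l_1$-product of the sequence $\bigl(X_{d_i},\rho_{d_i}^\beta\bigr)_{i\ge1}$, in which the $i$-th term has finite asymptotic dimension and is $2i$-discrete. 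The theorem on infinite products of $2i$-discrete spaces of finite asymptotic dimension then applies (in its $l_1$ version, matching the $l_1$-nature of $\rho_\beta$) and yields that $(1,\alpha)$ is an integral APD profile of $(\prod_{d\in D}X_d,\rho_\beta)$, where $\alpha(k)=\asdim\bigl(\prod_{i=1}^{k}(X_{d_i},\rho_{d_i}^\beta)\bigr)$; in particular this space has Asymptotic Property D.

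Finally I would transport the conclusion across the coarse equivalence: having an APD profile is a hereditary coarse invariant, so $(\prod_{d\in D}X_d,\rho_\alpha)$ has Asymptotic Property D, and Asymptotic Property D implies Asymptotic Property C. The only step requiring any care is the coordinatewise bookkeeping of the second paragraph --- confirming that the $l_1$-sum of the $\rho_d^\beta$ is literally $\rho_\beta$ and that each identity $(X_d,\rho_d)\to(X_d,\rho_d^\beta)$ is a coarse equivalence preserving finite asymptotic dimension --- but this is routine; the substantive input, namely the product theorem for discrete factors together with the weight-independence of asymptotic products, is already in hand.
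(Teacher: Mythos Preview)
Your proposal is correct and is exactly the argument the paper has set up: the corollary is stated without proof because it is meant to follow immediately from the weight-independence corollary for asymptotic products together with Theorem~6.3 on infinite $l_1$-products of $2i$-discrete spaces of finite asymptotic dimension, transported via the coarse invariance of APD profiles. Your coordinatewise identification $\rho_\beta(u,v)=\sum_{d}\rho_d^\beta(u(d),v(d))$ with $\rho_d^\beta(x,y)=\max(\rho_d(x,y),\beta(d))$ for $x\neq y$ is precisely the bookkeeping that makes this work, and your choice $\beta(d_i)=2i$ is the natural one.
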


\subsection{Reduced products}

\begin{Definition}
Suppose $D$ is an infinite countable set and a function
$\alpha:D\to [0,\infty)$ has limit infinity at infinity. Given $\infty$-pseudo-metric spaces $(X_d,\rho_d)$, $d\in D$, the \textbf{reduced product}
$(\times_{d\in D} X_d,\rho_\alpha)$ is
 the cartesian product $\prod\limits_{d\in D} X_d$
equipped with the $\infty$-pseudo-metric $\rho_\alpha$ defined as the sum 
$$\sum\limits_{d\in D} \alpha(d)\cdot \rho_d(u(d),v(d)).$$
\end{Definition}

\begin{Remark}
T.Davila \cite{Davila} introduced reduced products in the special case
of $D$ being the set of natural numbers and $\alpha(d)=2^d$.
Also, he operates in the metric spaces, so his reduced products
are really \textbf{based reduced products} consisting of points
in our reduced products whose distances to a fixed base point are finite.
\end{Remark}

\begin{Proposition}
Suppose 
$(\prod\limits_{d\in D} X_d,\rho_\alpha)$ is an asymptotic product
of $\infty$-pseudo-metric spaces $(X_d,\rho_d)$, $d\in D$.
If there is $c > 0$ such that each $X_d$ is $c$-discrete (i.e. $\rho_d(x,y)\ge c$ if $x\ne y\in X_d$), then the identity function from the asymptotic product to the reduced product
$(\times_{d\in D} X_d,\rho_\alpha)$ is a coarse equivalence.
\end{Proposition}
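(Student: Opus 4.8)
The plan is to note that the identity map is a bijection of the underlying set $\prod_{d\in D}X_d$, so it will be a coarse equivalence as soon as both it and its inverse are bornologous; concretely, it suffices to produce, for every $R>0$, an $S>0$ such that a pair of points at distance $\le R$ in one of the two $\infty$-pseudo-metrics is at distance $\le S$ in the other. The structural facts I will use are that $\alpha$ has limit infinity at infinity, so for every $R$ the sublevel set $E_R=\{d\in D:\alpha(d)\le R\}$ is finite, and that $c$-discreteness forces $\rho_d(u(d),v(d))\ge c$ whenever $u(d)\ne v(d)$, so that $u(d)=v(d)$ is equivalent to $\rho_d(u(d),v(d))=0$.

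First I would prove the implication from the asymptotic product to the reduced product. Fix $R>0$ and assume $\rho_\alpha(u,v)\le R$ in the asymptotic product. For each coordinate $d$ with $u(d)\ne v(d)$ the corresponding summand $r_d$ equals $\alpha(d)$ when $\rho_d(u(d),v(d))\le\alpha(d)$ and equals $\rho_d(u(d),v(d))>\alpha(d)$ otherwise; in both cases $\alpha(d)\le r_d\le R$, so $d\in E_R$, and in both cases $\rho_d(u(d),v(d))\le R$. Hence $u$ and $v$ agree off the finite set $E_R$, and for each $d\in E_R$ the reduced summand $\alpha(d)\,\rho_d(u(d),v(d))$ is at most $R^2$. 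Therefore the reduced-product distance is at most $|E_R|\,R^2$, and $S=|E_R|\,R^2$ works.

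Next I would prove the implication from the reduced product to the asymptotic product. Fix $R>0$ and assume $\sum_{d\in D}\alpha(d)\,\rho_d(u(d),v(d))\le R$. If $u(d)\ne v(d)$ then $\rho_d(u(d),v(d))\ge c$, whence $\alpha(d)\le R/c$ and $d\in E_{R/c}$. Set $m=\min\{\alpha(d):d\in E_{R/c}\}$, a positive number since it is the minimum of finitely many positive values (if $E_{R/c}=\emptyset$ then $u=v$ and there is nothing to prove). For $d\in E_{R/c}$ with $u(d)\ne v(d)$ the asymptotic summand $r_d$ equals $\alpha(d)\le R/c$ in the first case and equals $\rho_d(u(d),v(d))\le R/\alpha(d)\le R/m$ in the second, so $r_d\le\max(R/c,R/m)$ in either case. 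Summing over the at most $|E_{R/c}|$ active coordinates gives an asymptotic-product distance at most $|E_{R/c}|\max(R/c,R/m)$, so $S=|E_{R/c}|\max(R/c,R/m)$ works.

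The only real subtlety will be in this last implication: a priori $\alpha(d)$ could be very small, so a bounded reduced-product distance does not by itself bound $\rho_d(u(d),v(d))$, hence does not bound the asymptotic summand in the case $\rho_d(u(d),v(d))>\alpha(d)$. What rescues this is precisely that $c$-discreteness confines every coordinate in which $u$ and $v$ differ to the finite set $E_{R/c}$, on which $\alpha$ is automatically bounded away from $0$. Everything else is routine bookkeeping, and the two estimates together give that the identity is a coarse equivalence.
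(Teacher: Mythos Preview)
Your argument is correct. The paper states this proposition without proof, so there is no comparison to make; your two-direction bornologousness check is exactly the expected argument, and the use of $c$-discreteness to force all ``active'' coordinates into the finite set $E_{R/c}$ (so that $\alpha$ is bounded below there) is precisely the point of the hypothesis. One small sharpening you could make in the first direction: since $\rho_d(u(d),v(d))\le r_d$ in both cases and $\alpha(d)\le R$ on the active coordinates, you get directly $\sum_d \alpha(d)\,\rho_d(u(d),v(d))\le R\sum_d r_d\le R^2$, avoiding the factor $|E_R|$; but your bound already suffices.
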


\begin{Corollary}
Given two reduced products 
$(\times_{d\in D} X_d,\rho_\alpha)$ and $(\times_{d\in D} X_d,\rho_\beta)$
of $\infty$-pseudo-metric spaces $(X_d,\rho_d)$, $d\in D$, the identity function between them is a coarse equivalence provided there is $c > 0$ such that each $X_d$ is $c$-discrete.
\end{Corollary}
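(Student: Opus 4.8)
The plan is to deduce this from the two results immediately preceding it, by interposing the corresponding \emph{asymptotic} products between the two reduced products. The underlying bookkeeping observation is that, for $\infty$-pseudo-metrics on one fixed underlying set, the relation ``the identity map is a coarse equivalence'' is transitive and symmetric: a composition of coarse equivalences is a coarse equivalence, a coarse inverse of the identity set-map is again (up to closeness) the identity set-map, and a composition of identity set-maps is the identity set-map. So it is enough to connect $(\times_{d\in D}X_d,\rho_\alpha)$ and $(\times_{d\in D}X_d,\rho_\beta)$ by a chain of identity maps each of which is already known to be a coarse equivalence.

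First I would reduce to the case where $\alpha$ and $\beta$ take values in $(0,\infty)$, so that the asymptotic products $(\prod\limits_{d\in D}X_d,\rho_\alpha)$ and $(\prod\limits_{d\in D}X_d,\rho_\beta)$ are actually defined. This is harmless: since each of $\alpha,\beta$ has limit infinity at infinity it vanishes on at most a finite subset of $D$, and a coordinate of weight $0$ does not enter the corresponding reduced-product $\infty$-pseudo-metric at all, so such coordinates are absorbed by the evident projection onto the remaining coordinates, which is a coarse equivalence. Then I would argue as follows. By the preceding Corollary, the identity map between the asymptotic products $(\prod\limits_{d\in D}X_d,\rho_\alpha)$ and $(\prod\limits_{d\in D}X_d,\rho_\beta)$ is a coarse equivalence, with no discreteness needed. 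By the preceding Proposition, the hypothesis that some $c>0$ makes every $X_d$ be $c$-discrete is exactly what makes the identity map between $(\prod\limits_{d\in D}X_d,\rho_\alpha)$ and $(\times_{d\in D}X_d,\rho_\alpha)$, and likewise the one between $(\prod\limits_{d\in D}X_d,\rho_\beta)$ and $(\times_{d\in D}X_d,\rho_\beta)$, a coarse equivalence; this is the only use of the discreteness hypothesis. Composing the chain $(\times_{d\in D}X_d,\rho_\alpha)\to(\prod\limits_{d\in D}X_d,\rho_\alpha)\to(\prod\limits_{d\in D}X_d,\rho_\beta)\to(\times_{d\in D}X_d,\rho_\beta)$ of identity maps then gives that the identity map $(\times_{d\in D}X_d,\rho_\alpha)\to(\times_{d\in D}X_d,\rho_\beta)$ is a coarse equivalence, as claimed.

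The only step that requires any thought is the reduction to strictly positive weights in the $\infty$-pseudo-metric setting; once that and the bookkeeping about identity maps and coarse equivalences are in place, the corollary is a purely formal consequence of the Proposition and the Corollary above, so I do not anticipate a genuine obstacle.
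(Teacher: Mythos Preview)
Your argument is exactly the intended one: the paper states this Corollary without proof, leaving it as the immediate consequence of the preceding Proposition (identity between the asymptotic and the reduced product is a coarse equivalence under $c$-discreteness) together with the earlier Corollary on asymptotic products, composed along the chain of identity maps you describe. Your extra care about the possibility of zero weights goes beyond what the paper itself addresses.
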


The following result was proved by T.Davila \cite{Davila} in case of based reduced products of either groups or simplicial trees.
\begin{Corollary}\label{ReducedProductOfAsdimSpaces}
Reduced products 
$(\times_{d\in D} X_d,\rho_\alpha)$ of $\infty$-pseudo-metric spaces
of finite asymptotic dimension have Asymptotic Property C provided there is $c > 0$ such that each $X_d$ is $c$-discrete.
\end{Corollary}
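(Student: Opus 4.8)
The plan is to reduce the statement to Corollary~\ref{AsymptoticProductOfAsdimSpaces} using the coarse-equivalence results that relate reduced products to asymptotic products, and then to transport Asymptotic Property C along the resulting equivalences. For the transport I would use only the observation already made in the proof of the Proposition on coarse embeddings and APD profiles, namely that the point-inverse under a bornologous map of a set of scale-$s$-dimension $0$ is of scale-$r$-dimension $0$ once $s$ is large enough relative to the control function of the map at $r$. From this it is immediate that Asymptotic Property C passes from $A$ to $B$ along any bornologous surjection $g\colon B\to A$: given a sequence $\{r_i\}$, choose a corresponding sequence $\{s_i\}$, decompose $A=\bigcup_{i=1}^m Y_i$ with $Y_i$ of scale-$s_i$-dimension $0$, and note that $B=g^{-1}(A)=\bigcup_{i=1}^m g^{-1}(Y_i)$ is then a decomposition into sets of scale-$r_i$-dimension $0$.

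The first step is to remove the coordinates on which $\alpha$ vanishes. Since $\alpha$ has limit infinity at infinity, $E:=\{d\in D:\alpha(d)=0\}$ is finite, so $D\setminus E$ is infinite countable and $\alpha|_{D\setminus E}$ maps $D\setminus E$ into $(0,\infty)$ and still has limit infinity at infinity. The coordinate projection $\pi\colon \times_{d\in D}X_d\to \times_{d\in D\setminus E}X_d$ satisfies $\rho_\alpha(u,v)=\rho_{\alpha|_{D\setminus E}}(\pi u,\pi v)$ for all $u,v$, because the coordinates in $E$ contribute $0$ to $\rho_\alpha$; hence $\pi$ is a distance-preserving surjection, and any section fixing a point of $\prod_{d\in E}X_d$ is a coarse inverse of it. Thus $\pi$ is a coarse equivalence onto the reduced product $(\times_{d\in D\setminus E}X_d,\rho_{\alpha|_{D\setminus E}})$, whose factors are still $c$-discrete and of finite asymptotic dimension. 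It therefore suffices to treat the case $\alpha\colon D\to (0,\infty)$.

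In that case $(\prod_{d\in D}X_d,\rho_\alpha)$ is a genuine asymptotic product, and by the Proposition above comparing asymptotic products with reduced products the $c$-discreteness of the factors makes the identity map from it to the reduced product $(\times_{d\in D}X_d,\rho_\alpha)$ a coarse equivalence. By Corollary~\ref{AsymptoticProductOfAsdimSpaces} the asymptotic product of the finite-asymptotic-dimension spaces $X_d$ has Asymptotic Property C. Transporting this property first along the identity coarse equivalence and then along $\pi$ yields Asymptotic Property C for $(\times_{d\in D}X_d,\rho_\alpha)$, as desired.

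I do not expect a real obstacle here: the argument is a concatenation of coarse-equivalence lemmas already in hand, and the only point that is not purely formal is the possibility that $\alpha$ takes the value $0$, which is exactly what the preliminary projection $\pi$ is for. If one wished to avoid quoting Corollary~\ref{AsymptoticProductOfAsdimSpaces}, one could instead rescale the factor metrics so that they become uniformly spread out (a coarse equivalence, compensated by the weights) and apply the Theorem on infinite products of finite-asymptotic-dimension spaces together with the implication that Asymptotic Property D implies Asymptotic Property C; but routing through Corollary~\ref{AsymptoticProductOfAsdimSpaces} is shorter.
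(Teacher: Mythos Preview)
Your overall strategy is exactly the one the paper intends: the corollary is placed immediately after the Proposition identifying the asymptotic product with the reduced product up to coarse equivalence (under $c$-discreteness) and after Corollary~\ref{AsymptoticProductOfAsdimSpaces}, so the intended proof is precisely ``transport Asymptotic Property~C along that coarse equivalence.'' Your extra step handling the finitely many indices where $\alpha$ vanishes is a genuine improvement in precision, since the asymptotic product is only defined for $\alpha\colon D\to(0,\infty)$ while the reduced product allows $\alpha\colon D\to[0,\infty)$; the paper glosses over this.

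There is, however, a slip in your transport lemma. You assert that Asymptotic Property~C passes from $A$ to $B$ along any \emph{bornologous surjection} $g\colon B\to A$, and you justify it by citing the observation in the proof of the Proposition on APD profiles. But that Proposition is about \emph{coarse embeddings}, and the observation there uses both control functions: bornologousness sends a scale-$r$-chain in $g^{-1}(Y_i)$ to a scale-$s$-chain in $Y_i$, but to conclude that the preimage chain is bounded you need the \emph{lower} control (effectiveness/properness) of $g$ to pull the diameter bound back. A bare bornologous surjection does not suffice (take $g$ constant onto a point and $B$ any space lacking Asymptotic Property~C). In your two applications this causes no harm: your projection $\pi$ is an isometric surjection, and the identity map between the asymptotic and reduced products is a coarse equivalence by the cited Proposition, so both are surjective coarse embeddings and the argument goes through. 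Just replace ``bornologous surjection'' by ``surjective coarse embedding'' (or simply ``coarse equivalence'') and your proof is complete and in line with the paper's.
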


\end{document}